\documentclass[12pt,a4paper,oneside,onecolumn]{article}
\usepackage{mathrsfs}
\usepackage{amsfonts}
\usepackage{latexsym}
\usepackage{amsmath,amsthm}
\usepackage{epsf}
\usepackage{graphicx}
\usepackage{indentfirst}
\usepackage{cite}
\usepackage{color}
\usepackage{booktabs}
\usepackage{multirow}
\def\thesection{\arabic{section}}
\def\thesubsection{\arabic{section}.\arabic{subsection}}
\usepackage{caption}
\renewcommand{\figurename}{Fig.}
\pagestyle{plain} \headsep 0pt \topmargin 0pt \headheight 0pt
\oddsidemargin 0pt \evensidemargin 0pt \textheight 23 true cm
\textwidth 17 true cm \parskip 0mm
\parindent 20pt
\baselineskip 20pt
\renewcommand{\baselinestretch}{1.2}
\renewcommand{\arraystretch}{1}
\newcommand{\double}{\baselineskip 1 \baselineskip}

\theoremstyle{plain}
\newtheorem{thm}{\bf Theorem}[section]
\newtheorem{con}[thm]{\bf Construction}

\newtheorem{lem}[thm]{\bf Lemma}

\theoremstyle{definition}

\newtheorem{exam}{\bf Example}

\theoremstyle{remark}
\newtheorem{rmk}{\bf Remark}

\newcommand{\abs}[1]{\lvert#1\rvert}
\newcommand{\absb}[1]{\Bigl|#1\Bigr|}
\newcommand{\norm}[1]{\lVert#1\rVert}
\newcommand{\normb}[1]{\Big\lVert#1\Big\rVert}
\newcommand{\nnorm}[1]{| \mspace{-2mu} |\mspace{-2mu}|#1| \mspace{-2mu}
|\mspace{-2mu}|}
\newcommand{\spd}[2]{\protect\langle #1,#2\protect\rangle}
\title{\bf  New results on large sets of orthogonal arrays  and orthogonal arrays}
\author{Guangzhou Chen$^1$, Xiaodong Niu$^{1,2}$, Jiufeng Shi$^1$\\
\small1. {\it Henan Engineering Laboratory for Big Data Statistical Analysis and Optimal Control, }\\
\small {\it School of mathematics and information science, Henan Normal University,}\\
\small {\it Xinxiang, 453007, P.R.China}\\
\small2. {\it School of mathematical sciences, Hebei Normal University, Shijiazhuang, 050024, P.R.China} \\
}
\date{}
\begin{document}
\maketitle

\begin{center}
\center  {\bf Abstract}
\begin{minipage}{16cm}
{

\vspace{0.3cm} \ \ \ \ Orthogonal array and a large set of
orthogonal arrays are important research objects in combinatorial design theory, and they are widely applied to statistics, computer science, coding theory and cryptography. In this paper, some new series of large sets of orthogonal arrays are given by direct construction, juxtaposition construction, Hadamard construction, finite field construction and difference matrix construction. Subsequently, many new infinite classes of orthogonal arrays are obtained by using these large sets of orthogonal arrays and Kronecker product.

{\textit{Keywords:}} Orthogonal arrays; Mixed;  Large set; Difference matrix

}
\end{minipage}
\end{center}

\vskip 0.5cm

 {  \begingroup\makeatletter  \let\@makefnmark\relax  \footnotetext{ Corresponding author: Guangzhou Chen (chenguangzhou0808@163.com)  }  \makeatother\endgroup}

\section{Introduction}

Orthogonal array is a combinatorial configuration introduced firstly by Rao \cite{C.R1} in 1947. It is one of important topics in combinatorial design theory \cite{Handbook,Stinson} and experimental design theory \cite{G.T,Fang}.

An \emph{orthogonal array} OA$(N, s_1^{k_1}s_2^{k_2}\cdots s_m^{k_m}, t)$ is an array of size $N \times k$, where $k=\sum_{i=1}^m k_i$ is the total number of factors, in which the first $k_1$ columns have symbols from $\mathbb{Z}_{s_1}=\{0, 1,\ldots, s_1-1\}$, the next $k_2$ columns have symbols from $\mathbb{Z}_{s_2}=\{0, 1,\ldots, s_2-1\}$, and so on, with the property that in any $N\times t$ sub-array, every possible $t$-tuple occurs an equal number of times as a row. If $s_1=s_2=\cdots=s_m=s$, then it is called \emph{symmetric}, denoted by OA$(N,k,s,t)$. Otherwise, it is called \emph{asymmetric} or \emph{mixed}. In an OA$(N, s_1^{k_1}s_2^{k_2}\cdots s_m^{k_m}, t)$, the rows are referred to as \emph{runs}, so $N$ is the number of runs of the OA, also called its \emph{size}. The coordinates of  $\mathbb{Z}_{s_1}^{k_1}\times\mathbb{Z}_{s_2}^{k_2}\times\cdots\times\mathbb{Z}_{s_m}^{k_m}$ are called \emph{factors}, so $k$ is the number of factors, also called \emph{degree}. The elements $\mathbb{Z}_{s_i}$ represent the \emph{levels} of the corresponding factor. The
parameter $t$ is called \emph{the strength}. In statistics, an OA$(N, s_1^{k_1}s_2^{k_2}\cdots s_m^{k_m}, t)$ is termed a \emph{full factorial design} if its rows consist of exactly those vectors of the Cartesian product
$\mathbb{Z}_{s_1}^{k_1}\times\mathbb{Z}_{s_2}^{k_2}\times\cdots\times\mathbb{Z}_{s_m}^{k_m}$. It is called \emph{trivial} if its rows consist of a number of copies of a full factorial design.

Orthogonal array is widely applied to many other fields, for example, in fractional factorial experiments (see \cite{Hedayat-OA,Cheng,Mukerjee,Gupta,G.T,Rao}), computer experiments (see \cite{Fang,lin10,Owen}) and survey sampling (see \cite{McCarthy,Wu}), computer science (see \cite{A.W}), coding theory (see \cite{B.J,Hedayat-OA}) and cryptography (see \cite{Carlet,J.D,D.P}). Recently, orthogonal arrays are used to construct entangled states in quantum information
(see \cite{G.D,X.Z,X.Z1}) and
wireless sensor network (see \cite{G.Q,S.Pang2,S.Pang,Du}).

Orthogonal array of strength 2 have been studied extensively in
the literature (see \cite{Handbook,Chen,Zhang2}) and a great deal of methods
and results can be found in the monograph (see \cite{Hedayat-OA}) and the
Handbook (see \cite{Handbook}). With the further study of orthogonal array,
there are a few constructions for orthogonal array of strength $3$. For a
review on these methods of construction, we refer the readers to   \cite{Agrawal,Gupta1,A.E.,Nguyen,Schoen10,Suen,Jiang13,Ji10,yin11,chen17,Zhang3}.
However, relatively less work on the constructions for orthogonal arrays of
strength greater than three are available. Recently,
Pang et al. (see \cite{Pang,Wang2}) used orthogonal partitions to obtain some
new infinite families of strength $t\geq 3$. Quite recently, Chen and
Niu \cite{chen23} used large sets of orthogonal arrays
and Kronecker product to construct some new orthogonal arrays with high strength.

An orthogonal array is \emph{simple} if it does not contain two identical
rows.  Let $\mathbb{Z}_v=\{0,1,2,\ldots,\\v-1\}$. A set $\mathcal{L}=\{A_1, A_2, \ldots ,A_M\}$ is said to be \emph{a large set of
orthogonal arrays}, denoted by LOA$(N,k,v,t)$ if each $A_i$, $i=1,2,\ldots,M$, is a
simple OA$(N,k,v,t)$ over $\mathbb{Z}_v$ and every possible $k$-tuple
over $\mathbb{Z}_v$ occurs in exactly one of the $M$ OAs. It is easy to compute that
$M=\frac{v^k}{N}$. Equivalently, the union of the $M$ simple OAs forms a trivial OA$(v^k, k, v,k)$. The notion of an LOA was proposed firstly by Stinson\cite{D.R}. It can be
used to construct zigzag function and resilient function \cite{D.R2,D.R},
which have important applications in computer science and cryptography. In addition, Zhang and Lei \cite{Z.Y} found that there is a close relationship between LOA and multimagic rectangles, and Zhang et al. \cite{Z.Y2} also introduced the concept of strong double large sets of orthogonal arrays to construct multimagic squares. The definition of an LOA$(N,k,v,t)$ was first generalized to a large set of orthogonal arrays with the mixed levels by Chen and Niu \cite{chen23}, which can be used to construct orthogonal arrays with high strength.

Let $k=\sum_{i=1}^mk_i$, a set $\mathcal{L}=\{A_1, A_2, \ldots ,A_M\}$ is said to be a large set of
OA$(N, s_1^{k_1}s_2^{k_2}\cdots s_m^{k_m}, t)$ over $S=\mathbb{Z}_{s_1}^{k_1}\times\cdots\times \mathbb{Z}_{s_m}^{k_m}$
if each $A_i$, $i=1,2,\ldots,M$, is a simple
OA$(N, s_1^{k_1}s_2^{k_2}\cdots s_m^{k_m}, t)$ over $S$ and every
possible $k$-tuple over $S$ occurs in exactly one of the $M$ simple
OAs in the set $\mathcal{L}$. Clearly, $M=\frac{\prod_{i=1}^ms_i^{k_i}}{N}$.
A large set of OA$(N, s_1^{k_1}s_2^{k_2}\cdots s_m^{k_m}, t)$ is denoted
by LOA$(N, s_1^{k_1}s_2^{k_2}\cdots s_m^{k_m}, t)$ for short.

In this article, we mainly focus on the constructions of large sets of orthogonal arrays and orthogonal arrays. The new results on large sets of orthogonal arrays are listed in \textbf{Tables 1-3} and \textbf{Table 5}, and the new results on orthogonal arrays are give by \textbf{Table 4}, Theorems \ref{v1+v3-2}-\ref{qtp43} and \textbf{Table 6} with some detailed parameters.

The rest of the article is organized as follows. In Section 2,
we give some new constructions of large sets of orthogonal arrays,
such as direct construction, juxtaposition construction, Hadamard
construction, finite field construction and difference matrix construction,
and then obtain some new series of large sets of orthogonal arrays by using
these constructions. In Section 3, many new infinite classes of orthogonal arrays
are obtained by using the large sets of orthogonal arrays and Kronecker product.
We conclude with some discussion of our work and possible future directions in Section 4.

\section{Constructions of large sets of orthogonal arrays}

In this section, we focus on constructions of large sets of orthogonal arrays
and obtain some new series of large sets of orthogonal arrays by using
these constructions.

\subsection{Large sets of orthogonal arrays with small runs}

In this subsection, we shall give some new large sets of orthogonal arrays with small runs
by using the following lemmas.

\begin{lem}[\cite{chen23}]\label{3-20}
If there exists an OA$(N, s_1 s_2 \cdots s_k, t)$, and there exists a subset
$\{i_1,i_2,\ldots,i_l\}$ $\subseteq\{1,2,\ldots,k\}$ such that $N=s_{i_1}s_{i_2}\cdots s_{i_l}$ and every possible $l$-tuple in the columns $i_1,i_2,\ldots,i_l$ occurs exactly once in the OA as a row, then there exists an LOA$(N, s_1 s_2 \cdots s_k, t)$.
\end{lem}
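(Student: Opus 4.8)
The plan is to realise the desired large set as a collection of translates of the given orthogonal array under the group $G=\mathbb{Z}_{s_1}\times\cdots\times\mathbb{Z}_{s_k}$ acting column-wise. Write $A$ for the given OA$(N, s_1s_2\cdots s_k, t)$, put $I=\{i_1,\ldots,i_l\}$ and $J=\{1,\ldots,k\}\setminus I$, and for $g=(g_1,\ldots,g_k)\in G$ let $A+g$ denote the array obtained from $A$ by replacing each entry $x$ of column $j$ by $x+g_j$ (addition in $\mathbb{Z}_{s_j}$). First I would record two elementary facts. (i) Adding $g_j$ to column $j$ is a permutation of the symbol set of that column, so $A+g$ is again an OA$(N, s_1s_2\cdots s_k, t)$: relabelling levels within columns preserves the property that every $t$-tuple occurs equally often in each $N\times t$ subarray. (ii) Since the columns indexed by $I$ already contain every $l$-tuple exactly once, any two rows of $A$ differ in those columns, so $A$—and hence each $A+g$—is simple.

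Next I would take the index set of the large set to be $\Gamma=\{g\in G:\ g_j=0\text{ for all }j\in I\}$, so that $\lvert\Gamma\rvert=\prod_{j\in J}s_j=\big(\prod_{j=1}^k s_j\big)/N=M$, and set $\mathcal{L}=\{A+g:\ g\in\Gamma\}$. The core of the argument is to show the row sets of the members of $\mathcal{L}$ are pairwise disjoint. Suppose rows $\rho,\rho'$ of $A$ satisfy $\rho+g=\rho'+g'$ for some $g,g'\in\Gamma$. Restricting this equality to the columns in $I$, where $g_j=g'_j=0$, gives $\rho|_I=\rho'|_I$; since $A$ restricted to the columns $I$ lists every $l$-tuple exactly once, the map $\rho\mapsto\rho|_I$ is injective on the rows of $A$, so $\rho=\rho'$, and then $g=g'$. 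Hence distinct $g\in\Gamma$ yield arrays with disjoint row sets.

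I would then finish by counting. The $M$ arrays in $\mathcal{L}$ are pairwise disjoint, each has $N$ rows, and every such row is a $k$-tuple over $\mathbb{Z}_{s_1}\times\cdots\times\mathbb{Z}_{s_k}$; therefore $\bigcup_{g\in\Gamma}(A+g)$ consists of $MN=\prod_{j=1}^k s_j$ distinct $k$-tuples and so must be exactly the full factorial design. In particular every $k$-tuple occurs in precisely one member of $\mathcal{L}$, which together with facts (i) and (ii) shows that $\mathcal{L}$ is an LOA$(N, s_1s_2\cdots s_k, t)$.

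The one point that needs a little care is the disjointness step, and in particular the realisation that one must translate \emph{only} along the columns in $J$: translating along the columns in $I$ merely permutes the rows of $A$ among themselves (those columns form a full factorial), producing the same OA rather than a fresh block of the large set, so the choice $\Gamma=\{g:\ g|_I=0\}$ is forced. Everything else is routine bookkeeping once facts (i) and (ii) are established.
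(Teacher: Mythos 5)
Your argument is correct: translating the given OA by all vectors supported on the columns outside $\{i_1,\ldots,i_l\}$ yields $M=\prod_j s_j/N$ simple OAs whose row sets are pairwise disjoint (by the injectivity of the projection onto the distinguished columns) and whose union, by counting, is the full factorial, which is exactly the definition of an LOA$(N,s_1s_2\cdots s_k,t)$. The paper itself states this lemma as a citation from \cite{chen23} without reproducing a proof, and your translation construction is essentially the standard argument used there, so there is nothing to flag.
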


\begin{lem}[\cite{chen23}]\label{3-201}
If there exists an OA$(N, s_1 s_2 \cdots s_k, t)$ and there exists
a subset $\{i_1,i_2,\ldots,i_t\}\\\subseteq\{1,2,\ldots, k\}$ such that $N=s_{i_1}s_{i_2}\cdots s_{i_t}$, then there exists an
LOA$(N, s_1 s_2 \cdots s_k, t)$.
\end{lem}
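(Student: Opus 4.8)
The plan is to deduce this statement immediately from Lemma \ref{3-20}; the only thing to check is that strength $t$ already forces the extra uniformity condition appearing in that lemma. Explicitly, start from the given OA$(N, s_1 s_2 \cdots s_k, t)$ and the index set $\{i_1,i_2,\ldots,i_t\}$, and look at the $N\times t$ sub-array formed by columns $i_1,\ldots,i_t$. By the definition of strength $t$, every $t$-tuple in $\mathbb{Z}_{s_{i_1}}\times\cdots\times\mathbb{Z}_{s_{i_t}}$ occurs equally often, say $\lambda$ times, as a row of this sub-array. Counting rows in two ways gives $\lambda\cdot s_{i_1}s_{i_2}\cdots s_{i_t}=N$, and since by hypothesis $N=s_{i_1}s_{i_2}\cdots s_{i_t}$, we get $\lambda=1$. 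Thus every possible $t$-tuple in columns $i_1,\ldots,i_t$ occurs exactly once.

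Consequently the hypotheses of Lemma \ref{3-20} are satisfied with $l=t$ and with the same index subset $\{i_1,\ldots,i_t\}$: there is an OA$(N, s_1 s_2\cdots s_k, t)$, a set of $t$ columns whose product of level-sizes equals $N$, and every $t$-tuple in those columns appears exactly once as a row. Applying Lemma \ref{3-20} yields an LOA$(N, s_1 s_2\cdots s_k, t)$, which is precisely the desired conclusion. I do not expect any genuine obstacle here; the one point to get right is the bookkeeping argument that the combination of strength $t$ with the arithmetic identity $N=s_{i_1}\cdots s_{i_t}$ pins $\lambda$ down to $1$, so that the ``occurs exactly once'' requirement of Lemma \ref{3-20} is automatic rather than an additional assumption to be verified separately.
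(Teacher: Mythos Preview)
Your argument is correct. Note that the paper does not actually supply a proof of this lemma; it is quoted from \cite{chen23} without proof. Your derivation---using strength $t$ plus the counting identity $\lambda\cdot s_{i_1}\cdots s_{i_t}=N$ to force $\lambda=1$, and then invoking Lemma~\ref{3-20} with $l=t$---is exactly the intended reduction and is the standard way to see why this special case follows from the more general Lemma~\ref{3-20}.
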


Obviously, an OA$(N,k-i,s,t)$ can be obtained by deleting any $i$ columns from an OA$(N,k,\\s,t)$ directly, where $0\leq i\leq k-t$. Based on this fact, we have the following lemma.

\begin{lem}\label{3-202}
If there exists an OA$(N, s_1 s_2 \cdots s_k, t)$, and there exists a subset
$\{i_1,i_2,\ldots,i_l\}$ $\subseteq\{1,2,\ldots,k\}$ such that $N=s_{i_1}s_{i_2}\cdots s_{i_l}$ and every possible $l$-tuple in the columns $i_1,i_2,\ldots,i_l$ occurs exactly once in the OA as a row, then there exists an LOA$(N, s_{i_1} s_{i_2} \cdots s_{i_l}s_{j_1}s_{j_2}\cdots s_{j_m}, t)$, where $\{i_1,i_2,\ldots,i_l,j_1,j_2,\ldots,j_m\}\subseteq\{1,2,\ldots,k\}$.
\end{lem}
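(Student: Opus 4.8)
The plan is to combine Lemma~\ref{3-20} with the observation preceding the statement about deleting columns from an orthogonal array. First I would apply Lemma~\ref{3-20} directly to the given OA$(N, s_1 s_2 \cdots s_k, t)$ together with the index set $\{i_1,i_2,\ldots,i_l\}$: since $N = s_{i_1}s_{i_2}\cdots s_{i_l}$ and every $l$-tuple in columns $i_1,\ldots,i_l$ occurs exactly once, the hypotheses of Lemma~\ref{3-20} are met, so there exists an LOA$(N, s_1 s_2 \cdots s_k, t)$, say $\mathcal{L}=\{A_1,A_2,\ldots,A_M\}$ with $M = \frac{\prod_{r=1}^k s_r}{N}$, each $A_q$ a simple OA$(N, s_1 s_2 \cdots s_k, t)$ over $\mathbb{Z}_{s_1}\times\cdots\times\mathbb{Z}_{s_k}$, and every $k$-tuple occurring in exactly one $A_q$.

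Next I would restrict each $A_q$ to the column set $C = \{i_1,\ldots,i_l,j_1,\ldots,j_m\}$, i.e. delete the columns not in $C$. Deleting columns from an OA of strength $t$ yields an OA of strength $t$ (the stated fact, valid as long as at least $t$ columns remain — which holds here because $\{i_1,\ldots,i_l\}\subseteq C$ and we may assume $l\ge t$, or more carefully because the union still has $t$ independent coordinates from the original; in any case the relevant configuration is of strength $t$ after restriction), so each $A_q|_C$ is an OA$(N, s_{i_1}\cdots s_{i_l}s_{j_1}\cdots s_{j_m}, t)$. The key points I then need to verify are: (i) each $A_q|_C$ is still \emph{simple}, and (ii) every possible tuple over $S' = \mathbb{Z}_{s_{i_1}}\times\cdots\times\mathbb{Z}_{s_{i_l}}\times\mathbb{Z}_{s_{j_1}}\times\cdots\times\mathbb{Z}_{s_{j_m}}$ occurs in exactly one of the $A_1|_C,\ldots,A_M|_C$.

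For (i), simplicity follows because within a single $A_q$ the restriction to the columns $i_1,\ldots,i_l$ alone already has all $N$ rows distinct: by Lemma~\ref{3-20}'s construction (equivalently, because $A_q$ is simple and of size $N = s_{i_1}\cdots s_{i_l}$ with strength $t\le l$ applied... more precisely, the defining property of the LOA from Lemma~\ref{3-20} is that within each block the $l$ designated columns form a full factorial), the $i$-columns part of $A_q$ is a permutation of $\mathbb{Z}_{s_{i_1}}\times\cdots\times\mathbb{Z}_{s_{i_l}}$, so no two rows of $A_q|_C$ can coincide. For (ii), I would count: each $A_q|_C$ has $N$ distinct rows, there are $M$ blocks, and $M \cdot N = \prod_{r=1}^k s_r$; on the other hand $|S'| = \prod_{r\in C} s_r$ which in general is smaller, so I cannot argue by a naive cardinality match — instead I should observe that two distinct $A_q|_C$ may share rows. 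This forces a reconsideration: the correct target large set has $M' = \frac{\prod_{r\in C}s_r}{N} = \prod_{q\in\{j_1,\ldots,j_m\}} s_q$ blocks, not $M$. So the real plan is to \emph{group} the original blocks: partition $\{A_1,\ldots,A_M\}$ according to the value-pattern appearing in the $j$-columns (or rather, re-sort all $v^k$ global rows by their $C$-coordinates).

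The hard part will be organizing this regrouping cleanly. Concretely: consider the trivial OA$(\prod_{r=1}^k s_r, k, \ldots, k)$ obtained as the union $\bigcup_q A_q$; project it onto $C$ to get a trivial array $T$ in which every tuple of $S'$ appears exactly $N' := \frac{\prod_{r=1}^k s_r}{|S'|}$ times, and note $N' $ is a multiple of... no — instead I should directly define the desired blocks as follows. For the columns not in $C$, say $\{p_1,\ldots,p_{k-l-m}\}$ (after removing $\{i\}$ and $\{j\}$ indices), there is redundancy. The cleanest route: apply Lemma~\ref{3-20} not to the full OA but to the OA$(N, s_{i_1}\cdots s_{i_l}s_{j_1}\cdots s_{j_m}, t)$ obtained by \emph{first} deleting columns outside $C$ from the original OA$(N, s_1\cdots s_k, t)$. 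That restricted array is itself an OA of strength $t$ containing the columns $i_1,\ldots,i_l$ with $N = s_{i_1}\cdots s_{i_l}$ and every $l$-tuple there occurring exactly once (this property survives restriction of columns, since it is a statement about the $i$-columns only). Then Lemma~\ref{3-20} applied to this smaller OA immediately yields LOA$(N, s_{i_1}\cdots s_{i_l}s_{j_1}\cdots s_{j_m}, t)$, which is exactly the claim. So the proof reduces to: (1) delete columns outside $C$ to get a strength-$t$ OA on column set $C$; (2) verify the "every $l$-tuple once" hypothesis is inherited; (3) invoke Lemma~\ref{3-20}. Step (2) is the only substantive check and it is essentially immediate because the hypothesis concerns only columns $i_1,\ldots,i_l$, all of which lie in $C$. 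I would write it up in this order, flagging that the column-deletion fact noted before the lemma is what legitimizes step (1).
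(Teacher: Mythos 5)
Your final argument---delete the columns outside $\{i_1,\ldots,i_l,j_1,\ldots,j_m\}$ first, observe that the ``every $l$-tuple occurs exactly once'' hypothesis survives because it concerns only columns that remain, and then apply Lemma \ref{3-20} to the reduced OA$(N, s_{i_1}\cdots s_{i_l}s_{j_1}\cdots s_{j_m}, t)$---is correct and is exactly the paper's proof. The initial detour (restricting the blocks of the full LOA and noticing the block count no longer matches) was rightly abandoned, and the self-corrected version coincides with the published argument.
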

\begin{proof}
By assumption, deleting some columns from an OA$(N, s_1 s_2 \cdots s_k, t)$ we obtain an OA$(N,\\ s_{i_1} s_{i_2} \cdots s_{i_l}s_{j_1}s_{j_2}\cdots s_{j_m}, t)$ such that $N=s_{i_1}s_{i_2}\cdots s_{i_l}$ and every possible $l$-tuple in the columns $i_1,i_2,\ldots,i_l$ occurs exactly once in the OA as a row, where $\{i_1,i_2,\ldots,i_l,j_1,j_2,\ldots,j_m\}\subseteq\{1,2,\ldots,k\}$.
The conclusion immediately holds by Lemma \ref{3-20}.
\end{proof}

By Lemmas \ref{3-201} and \ref{3-202}, the new LOAs can be obtained
in \textbf{Table 1}, where the known OAs listed in the following
table are from  the monograph [24] or the website:
\begin{center}
http://neilsloane.com/oadir/index.html.
\end{center}
We also found some OAs satisfying the conditions mentioned in Lemma \ref{3-20} and the corresponding columns are marked in red, which are listed in \textbf{Appendix A}. Then by Lemmas \ref{3-20}-\ref{3-201}, the new LOAs can be also obtained and listed in \textbf{Table 2}.

\begin{center}
\textbf{Table 1 \ New LOAs }
\vskip 8pt
{\scriptsize
\begin{tabular}{lll}
\hline Known OAs &  New LOAs& Parameters \\
\hline
OA$(8,2^4 4^1,2)$&LOA$(8,2^k 4^1,2)$& $1\leq k\leq 4$\\
OA$(12,2^2 6^1,2)$&LOA$(12,2^k 6^1,2)$& $k=1,2$\\
OA$(16,2^8 8^1,2)$&LOA$(16,2^k 8^1,2)$& $1\leq k\leq 8$\\
OA$(18,3^6 6^1,2)$&LOA$(18,3^k 6^1,2)$& $1\leq k\leq 6$\\
OA$(20,2^2 10^1,2)$&LOA$(20,2^k 10^1,2)$& $k=1,2$\\
OA$(24, 2^{12} 12^1,2)$&LOA$(24, 2^{k} 12^1,2)$& $1\leq k\leq 12$\\
OA$(24,2^{11} 4^1 6^1,2)$&LOA$(24,2^{k} 4^1 6^1,2)$& $0\leq k\leq 11$\\
OA$(27,3^9 9^1,2)$&LOA$(27,3^k 9^1,2)$& $1\leq k\leq 9$\\
OA$(28,2^2 14^1,2)$&LOA$(28,2^k 14^1,2)$& $ k=1,2$\\
OA$(32,2^{16} 16^1,2)$&LOA$(32,2^{k} 16^1,2)$& $1\leq k\leq 16$\\
OA$(32,4^8 8^1,2)$&LOA$(32,4^k 8^1,2)$& $1\leq k\leq 8$\\
OA$(36,2^{10} 3^1 6^2 ,2)$&LOA$(36,2^{k_1} 3^{k_2} 6^2 ,2)$&$0\leq k_1\leq 10$, $k_2=0,1$\\
OA$(36,2^9 3^4 6^2,2)$&LOA$(36,2^{k_1} 3^{k_2} 6^2,2)$& $0\leq k_1\leq 9$, $0\leq k_2\leq4$\\
OA$(36,3^{12} 12^1,2)$&LOA$(36,3^{k} 12^1,2)$&$1\leq k\leq 12$\\
OA$(36,2^{13}6^2,2)$&LOA$(36,2^{k}6^2,2)$&$0\leq k\leq 13$\\
OA$(40,2^{19} 4^1 10^1,2)$ &LOA$(40,2^{k}4^1 10^1,2)$&$0\leq k\leq 19$\\
OA$(45,3^9 15^1,2)$&LOA$(45,3^k 15^1,2)$&$1\leq k\leq 9$\\
OA$(48,2^{31} 6^1 8^1,2)$&LOA$(48,2^{k} 6^1 8^1,2)$&$0\leq k\leq 31$\\
OA$(48,2^{24} 24^1,2)$&LOA$(48,2^{k} 24^1,2)$&$1\leq k\leq 24$\\
OA$(50,5^{10} 10^1,2)$&LOA$(50,5^{k} 10^1,2)$&$1\leq k\leq 10$ \\
OA$(54,3^{20} 6^1 9^1,2)$& LOA$(54,3^{k} 6^1 9^1,2)$&$0\leq k\leq 20$\\
OA$(56,2^{27} 4^1 14^1,2)$&LOA$(56,2^{k} 4^1 14^1,2)$&$0\leq k\leq 27$\\
OA$(60,2^{15}6^110^1,2)$&LOA$(60,2^{k}6^110^1,2)$&$0\leq k\leq 15$\\
OA$(63,3^{12} 21^1,2)$&LOA$(63,3^{k} 21^1,2)$&$1\leq k\leq 12$\\
OA$(64,2^{32} 32^1,2)$&LOA$(64,2^{k} 32^1,2)$&$1\leq k\leq 32$\\
OA$(64,4^{16} 16^1,2)$&LOA$(64,4^{k} 16^1,2)$&$1\leq k\leq 16$\\
OA$(64,4^7 8^6,2)$&LOA$(64,4^{k_1}8^{k_2},2)$&$0\leq k_1\leq 7$, $2\leq k_2\leq6$\\
OA$(72,2^{27}3^{11}6^112^1,2)$&LOA$(72,2^{k_1}3^{k_2}6^112^1,2)$&$0\leq k_1\leq 27$, $0\leq k_2\leq11$\\
OA$(80,2^{55} 8^1 10^1,2)$&LOA$(80,2^{k} 8^1 10^1,2)$&$0\leq k\leq 55$\\
OA$(80,2^{51} 4^3 20^1)$&LOA$(80,2^{k_1} 4^{k_2} 20^1)$&$0\leq k_1\leq 51$, $1\leq k_2\leq3$\\
OA$(80,2^{40} 40^1,2)$&LOA$(80,2^{k} 40^1,2)$&$1\leq k\leq 40$\\
OA$(81,3^{27} 27^1)$&LOA$(81,3^{k} 27^1)$&$1\leq k\leq 27$\\
OA$(84,2^{14}6^114^1,2)$&LOA$(84,2^{k}6^114^1,2)$&$0\leq k\leq 14$\\
OA$(88,2^{44} 44^1 ,2)$&LOA$(88,2^{k} 44^1 ,2)$&$1\leq k\leq 44$\\
OA$(90,3^{30} 30^1,2)$&LOA$(90,3^{k} 30^1,2)$&$1\leq k\leq 30$\\
OA$(90,3^{26} 6^1 15^1,2)$&LOA$(90,3^{k} 6^1 15^1,2)$&$0\leq k\leq 26$\\
OA$(96,2^{71} 6^1 16^1,2)$&LOA$(96,2^{k} 6^1 16^1,2)$& $0\leq k\leq 71$\\ OA$(132,2^26^122^1,2)$&LOA$(132,2^{k}6^122^1,2)$&$0\leq k\leq 2$\\
OA$(16,2^3 4^1,3)$&LOA$(16,2^k 4^1,3)$&$k=2,3$\\
OA$(24,2^3 6^1,3)$&LOA$(24,2^k 6^1,3)$&$k=2,3$\\
OA$(32,2^44^2,3)$&LOA$(32,2^k4^2,3)$&$1\leq k\leq 4$\\
OA$(48,2^4 6^1, 4)$&LOA$(48,2^k 6^1, 4)$&$k=3,4$\\
OA$(128,2^34^3,4)$&LOA$(128,2^k4^3,4)$&$k=1,2,3$\\
OA$(128,2^4 4^2, 5)$&LOA$(128,2^k 4^2, 5)$&$k=3,4$\\
\hline
\end{tabular}}
\end{center}

\newpage

\begin{center}
\textbf{Table 2 \ New LOAs with runs size $N\leq100$ }
\vskip 8pt
{
\scriptsize
\begin{tabular}{ll|ll}
\hline New LOAs &  Parameters& New LOAs &  Parameters\\
\hline
LOA$(20,2^k5^1,2)$&$2\leq k\leq 8$& LOA$(24, 2^k 3^1 4^1,2)$&$1\leq k\leq 13$\\
LOA$(28,2^k 7^1,2)$&$2\leq k\leq 12$&LOA$(36,6^1 3^{k_1} 2^{k_2},2)$&$1\leq k_1\leq 12$, $1\leq k_2\leq2$\\
LOA$(36,2^{k}3^1 6^1,2)$&$1\leq k\leq 18$&LOA$(36,2^{k}9^1,2)$&$2\leq k\leq 13$\\
LOA$(36,2^{k_1} 3^{k_2}6^1,2)$&$1\leq k_1\leq 11$, $1\leq k_2\leq 2$&LOA$(36,6^1 3^{k_1} 2^{k_2},2)$&$1\leq k_1\leq 8$, $1\leq k_2\leq10$\\
LOA$(36,3^2 2^k,2)$&$2\leq k\leq 20$&LOA$(40,2^{k} 4^1 5^1,2)$&$1\leq k\leq 25$\\
LOA$(44,2^{k}11^1,2)$&$2\leq k\leq 16$&LOA$(48,2^{k} 3^1 8^1,2)$&$1\leq k\leq 33$\\
LOA$(52,2^{k} 13^1 ,2)$&$2\leq k\leq 17$&LOA$(56,2^{k} 4^1 7^1 ,2)$&$2\leq k\leq 37$\\
LOA$(64,2^{k_1} 4^{k_2} 8^1 ,2)$&$1\leq k_1\leq 5$, $1\leq k_2\leq17$&LOA$(72,2^{k} 3^1 4^1 6^1,2)$&$0\leq k\leq 51$\\
LOA$(72,2^{k} 4^1 9^1,2)$&$1\leq k\leq 49$&LOA$(72,2^{k_1} 4^{k_2} 6^2,2)$&$1\leq k_1\leq 46$, $0\leq k_2\leq1$\\
LOA$(72,2^{k_1} 3^{k_2} 4^1,2)$&$1\leq k_1\leq 44$, $2\leq k_2\leq12$&LOA$(72,2^{k_1} 3^{k_2} 4^{k_3} 6^2,2)$&$1\leq k_1\leq 42$, $0\leq k_2\leq4$, $0\leq k_3\leq 1$\\
LOA$(72,2^{k_1} 4^{k_2} 6^{k_3} ,2)$&$1\leq k_1\leq 41$,$0\leq k_2\leq 1$, $2\leq k_3\leq3$&LOA$(72,2^{k_1} 3^{k_2} 4^1 6^1,2)$&$0\leq k_1\leq 36$, $1\leq k_2\leq9$\\
LOA$(72,2^{k_1} 3^{k_2} 4^1 6^1,2)$&$0\leq k_1\leq 35$, $1\leq k_2\leq12$&LOA$(72,2^{k_1} 3^{k_2} 4^{k_3} 6^2,2)$&$1\leq k_1\leq 34$, $0\leq k_2\leq8$, $0\leq k_3\leq 1$\\
LOA$(72,2^{k_1} 3^{k_2} 6^{k_3} ,2)$&$1\leq k_1\leq 30$, $0\leq k_2\leq 1$, $2\leq k_3\leq4$&LOA$(80,2^{k} 5^1 8^1 ,2)$&$1\leq k\leq 61$\\
LOA$(96,2^{k} 3^1 16^1  ,2)$&$1\leq k\leq 73$&LOA$(96,2^{k_1} 4^{k_2} 6^1 8^1 ,2)$&$1\leq k_1\leq 43$, $0\leq k_2\leq12$\\
LOA$(96,2^{k_1} 3^1 4^{k_2} 8^1 ,2)$&$0\leq k_1\leq 39$, $1\leq k_2\leq14$
&LOA$(96,2^{k_1} 3^1 4^{k_2} ,2)$&$1\leq k_1\leq 19$, $2\leq k_2\leq23$\\
LOA$(96,2^{k_1} 4^{k_2} 12^1 ,2)$&$1\leq k_1\leq 18$, $1\leq k_2\leq22$
&LOA$(96,2^{k_1} 4^{k_2} 6^1 ,2)$&$2\leq k_1\leq 17$, $1\leq k_2\leq23$\\
LOA$(100,2^{k_1} 5^{k_2} ,2)$&$2\leq k_1\leq 40$, $2\leq k_2\leq4$&LOA$(40, 5^12^k, 3)$&$3\leq k\leq 6$\\
LOA$(48, 4^1 3^1 2^k, 3)$&$2\leq k\leq 4$&LOA$(48, 3^1 2^k, 3)$&$4\leq k\leq 9$\\
LOA$(54, 3^k2^1, 3)$&$3\leq k\leq 5$\\ \hline
\end{tabular}}
\end{center}

\subsection{Juxtaposition Construction }

The following is the juxtaposition construction of orthogonal arrays.

\begin{lem}[\cite{Chen}]\label{juxtaposition-OA}
If there exist an OA$(N_1, a_1a_2 \cdots a_k, t) $ and an OA$(N_2, b_1a_2 \cdots a_k, t) $ satisfying $\frac {N_1} {a_1}= \frac {N_2} {b_1} $, then there exists an OA $(N_1+N_2, (a_1+b_1)a_2\cdots a_k, t)$.
\end{lem}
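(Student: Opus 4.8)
The plan is to build the required array by vertically juxtaposing (stacking) the two given orthogonal arrays, after relabelling the first column of the second one so that its alphabet becomes disjoint from that of the first column of the first one. Let $A$ be the given OA$(N_1, a_1 a_2 \cdots a_k, t)$ over $\mathbb{Z}_{a_1}\times\mathbb{Z}_{a_2}\times\cdots\times\mathbb{Z}_{a_k}$ and let $B$ be the given OA$(N_2, b_1 a_2\cdots a_k, t)$ over $\mathbb{Z}_{b_1}\times\mathbb{Z}_{a_2}\times\cdots\times\mathbb{Z}_{a_k}$. First I would replace every entry $j$ in the first column of $B$ by $j+a_1$, producing a modified array $B'$ whose first column takes values in $\{a_1,a_1+1,\ldots,a_1+b_1-1\}$ and whose columns $2,\ldots,k$ are unchanged. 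Let $C$ be the $(N_1+N_2)\times k$ array whose rows are all the rows of $A$ followed by all the rows of $B'$. Then the first column of $C$ ranges over $\mathbb{Z}_{a_1+b_1}=\{0,1,\ldots,a_1+b_1-1\}$ and column $i$ ($2\le i\le k$) ranges over $\mathbb{Z}_{a_i}$, so $C$ has exactly the size and level structure claimed; it remains only to verify that $C$ has strength $t$.

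Next I would fix an arbitrary choice of $t$ columns $c_1<c_2<\cdots<c_t$ of $C$ and an arbitrary $t$-tuple $(y_1,\ldots,y_t)$ over the corresponding alphabets, and count the rows of $C$ in which it occurs. There are two cases. If $c_1\ge 2$ (column $1$ is not chosen), then these $t$ columns have identical alphabets in $A$ and in $B'$; since $A$ and $B$ both have strength $t$, the tuple occurs $N_1/(a_{c_1}\cdots a_{c_t})$ times among the rows of $A$ and $N_2/(a_{c_1}\cdots a_{c_t})$ times among the rows of $B'$, hence $N_1/(a_{c_1}\cdots a_{c_t})+N_2/(a_{c_1}\cdots a_{c_t})$ times in $C$, which is independent of the tuple. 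If $c_1=1$, set $\mu=a_{c_2}\cdots a_{c_t}$ (an empty product being $1$). When $y_1\in\{0,\ldots,a_1-1\}$ no row of $B'$ can match (its first column has entries $\ge a_1$), so the count equals the number of matching rows in $A$, namely $N_1/(a_1\mu)$; when $y_1\in\{a_1,\ldots,a_1+b_1-1\}$ no row of $A$ can match, and the count equals the number of occurrences of $(y_1-a_1,y_2,\ldots,y_t)$ in $B$, namely $N_2/(b_1\mu)$. For $C$ to have strength $t$ these two quantities must be equal, i.e. $N_1/a_1=N_2/b_1$, which is precisely the hypothesis. Thus in every case the chosen $t$-tuple occurs a constant number of times in $C$, and $C$ is an OA$(N_1+N_2,(a_1+b_1)a_2\cdots a_k,t)$.

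The only real subtlety — the step I expect to be the main obstacle — is the sub-case in which column $1$ is among the $t$ selected columns, where the count splits according to which half of the enlarged alphabet $\{0,\ldots,a_1+b_1-1\}$ the value $y_1$ lies in. The equal-index condition $N_1/a_1=N_2/b_1$ is exactly what forces the two partial counts $N_1/(a_1\mu)$ and $N_2/(b_1\mu)$ to agree; it is worth remarking that this common value is just $\lambda\mu^{-1}$, where $\lambda=N_1/a_1=N_2/b_1$ is the strength-$1$ index of $A$ (and of $B$) on the first column, so the bookkeeping is internally consistent. Everything else is elementary counting, and no restriction on $t$ beyond $t\le k$ is required.
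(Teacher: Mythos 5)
Your proof is correct, and it is exactly the standard juxtaposition argument that the paper takes from \cite{Chen} and mirrors in its own Theorem \ref{N1+N2}: relabel the first column of the second array to make its symbols disjoint, stack the two arrays vertically, and check strength $t$ by splitting into the case where column $1$ is not selected (counts simply add) and the case where it is (the index condition $N_1/a_1=N_2/b_1$ forces the two partial counts $N_1/(a_1\mu)$ and $N_2/(b_1\mu)$ to coincide). No gaps; the verification matches the intended construction.
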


In a similar way to the juxtaposition construction of orthogonal arrays, we have the following theorem.

\begin{thm}\label{N1+N2}
If there exist an LOA$(N_1, a_1a_2\cdots a_k, t) $ and an LOA$(N_2, b_1a_2\cdots a_k, t)$ satisfying $\frac{N_1}{a_1}=\frac{N_2}{b_1} $, then there exists an LOA$(N_1+N_2, (a_1+b_1)a_2\cdots a_k, t)$.
\end{thm}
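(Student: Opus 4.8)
The plan is to carry out the juxtaposition at the level of the whole large set: pair up the constituent simple OAs of the two given large sets index by index, relabel the first column of the second member of each pair, and stack the two vertically. Before doing that, I would settle the bookkeeping. Write $\mathcal{L}_1=\{A_1,\dots,A_{M_1}\}$ for the given LOA$(N_1,a_1a_2\cdots a_k,t)$ and $\mathcal{L}_2=\{B_1,\dots,B_{M_2}\}$ for the given LOA$(N_2,b_1a_2\cdots a_k,t)$, so $M_1=\frac{a_1a_2\cdots a_k}{N_1}$ and $M_2=\frac{b_1a_2\cdots a_k}{N_2}$. The hypothesis $\frac{N_1}{a_1}=\frac{N_2}{b_1}$ yields $\frac{a_1a_2\cdots a_k}{N_1}=\frac{b_1a_2\cdots a_k}{N_2}$, hence $M_1=M_2=:M$; moreover $a_1a_2\cdots a_k=MN_1$ and $b_1a_2\cdots a_k=MN_2$, so $(a_1+b_1)a_2\cdots a_k=M(N_1+N_2)$, i.e. the target large set must again consist of $M$ arrays. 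This compatibility is what makes the index-by-index pairing meaningful.

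Next I would construct the candidate arrays. Taking the first column of each $A_i$ over $\mathbb{Z}_{a_1}$, the first column of each $B_i$ over $\mathbb{Z}_{b_1}$, and the remaining columns of both over $\mathbb{Z}_{a_2},\dots,\mathbb{Z}_{a_k}$, let $B_i'$ be the array obtained from $B_i$ by adding $a_1$ to every entry of its first column, so that column now takes values in $\{a_1,\dots,a_1+b_1-1\}$; then set
\[
C_i=\binom{A_i}{B_i'},
\]
the vertical juxtaposition, for $1\le i\le M$. The first column of $C_i$ ranges over $\mathbb{Z}_{a_1+b_1}$, the others over $\mathbb{Z}_{a_2},\dots,\mathbb{Z}_{a_k}$, and $C_i$ has $N_1+N_2$ rows. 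That each $C_i$ is an OA$(N_1+N_2,(a_1+b_1)a_2\cdots a_k,t)$ is exactly Lemma \ref{juxtaposition-OA} applied to the pair $(A_i,B_i)$, the relabelling of the first column being irrelevant to the orthogonality condition. Simplicity of $C_i$ is immediate: $A_i$ and $B_i$ are simple, hence so is $B_i'$, and no row of $A_i$ can coincide with a row of $B_i'$ since their first coordinates lie in the disjoint ranges $\{0,\dots,a_1-1\}$ and $\{a_1,\dots,a_1+b_1-1\}$.

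It then remains to verify the defining large-set property for $\mathcal{L}=\{C_1,\dots,C_M\}$: every $k$-tuple $\mathbf{x}=(x_1,\dots,x_k)$ over $\mathbb{Z}_{a_1+b_1}\times\mathbb{Z}_{a_2}\times\cdots\times\mathbb{Z}_{a_k}$ occurs in exactly one $C_i$. I would split on the value of $x_1$. If $x_1<a_1$, then $\mathbf{x}$ is a $k$-tuple over the level set of $\mathcal{L}_1$, and it cannot appear among the rows of any $B_i'$ (whose first coordinate is $\ge a_1$); hence the occurrences of $\mathbf{x}$ among the rows of the $C_i$'s are precisely its occurrences among the rows of the $A_i$'s, which, by the large-set property of $\mathcal{L}_1$ together with simplicity of the $A_i$, amount to a single occurrence in a single array. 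If $x_1\ge a_1$, write $x_1=a_1+y_1$ with $0\le y_1<b_1$; then $\mathbf{x}$ can occur only among the rows of the $B_i'$'s, and $(y_1,x_2,\dots,x_k)$ occurs in exactly one $B_i$ by the large-set property of $\mathcal{L}_2$, so $\mathbf{x}$ occurs in exactly one $C_i$. Since these two cases exhaust all $k$-tuples over the new level set, $\mathcal{L}$ is an LOA$(N_1+N_2,(a_1+b_1)a_2\cdots a_k,t)$, as required.

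The argument is essentially a transcription of the juxtaposition construction for OAs, now tracked across the constituents of a large set, so I do not anticipate a serious obstacle. The one point that genuinely needs the hypothesis is the equality $M_1=M_2$, which makes the pairing $A_i\leftrightarrow B_i$ well defined and forces the resulting family to have exactly the number of arrays an LOA$(N_1+N_2,(a_1+b_1)a_2\cdots a_k,t)$ is required to have; everything else is bookkeeping plus a direct appeal to Lemma \ref{juxtaposition-OA}.
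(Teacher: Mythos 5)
Your proposal is correct and follows essentially the same route as the paper: pair the constituent OAs index by index (the hypothesis forcing the two large sets to have the same number of members), make the first-column symbol sets disjoint (the paper takes $X_1\cap Y_1=\emptyset$ where you relabel by adding $a_1$), stack vertically, and invoke Lemma \ref{juxtaposition-OA} for each stacked array. Your case split on $x_1$ just spells out the verification the paper leaves as ``easy to verify,'' so there is nothing substantively different.
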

\begin{proof}
We have $\frac{\prod_ {i=1}^{k}a_ i}{N_1}=\frac{b_1\prod_{i=2}^{k}a_ i}{N_2}$
since $\frac{N_1}{a_1}=\frac{N_2}{b_1}$ and denote the number as $w$. Let $\{A_1, A_2,  \cdots,\\ A_w\}$ and $\{B_1, B_2, \cdots, B_w\}$ be an LOA $(N_1, a_1a_2\cdots a_k, t)$
and an LOA $(N_2, b_1a_2\cdots a_k, t) $ over $X_1\times X_2\times\cdots\times X_k$ and
$Y_1\times X_2\times\cdots\times X_k$ respectively, where $X_1\cap Y_1=\emptyset$. Let
\begin{center}
$M_i=\left(\begin{matrix}
A_i\\
B_i\\
\end{matrix}\right)$, $1 \leq i \leq w$.
\end{center}
By Lemma \ref{juxtaposition-OA}, each $M_ {i} $, $1 \leq i \leq w$, is an
OA$(N_1+N_2, (a_1+b_1)a_ 2\cdots a_ k, t)$ over $(X_1\cup Y_1)\times X_2\times\cdots\times X_k$. Furthermore, it is easy to verify that each $k$-tuple over $(X_1\cup Y_1)\times X_2\times\cdots\times X_k$ occurs in exactly one $M_i$ as a row. It follows that $\{M_{i}: 1  \leq i  \leq w \}$ is an LOA$(N_1+N_2, (a_1+b_1)a_ 2\cdots a_ k, t)$ over $(X_1\cup Y_1)\times X_2\times\cdots\times X_k$.
\end{proof}

\begin{exam}
There exists an LOA$(56,2^67^1,3)$.
\end{exam}
\begin{proof}
There exist an LOA$(16,7,2,3)$ and an LOA$(40,2^6 5^1,3)$ by Theorem \ref{LOA-level2-str3} and \textbf{Table 2} respectively. Clearly, $\frac{16}{2}=\frac{40}{5}$. Then there exists an LOA$(56,2^67^1,3)$ by Theorem \ref{N1+N2}.
\end{proof}

By Theorem \ref{N1+N2}, the new LOAs can be obtained and listed in \textbf{Table 3},
where the LOAs in the first column are from \textbf{Table 1} and \textbf{Table 2} respectively.

\begin{center}
\textbf{Table 3 \ New LOAs }
\vskip 8pt
{\footnotesize
\begin{tabular}{lll}
\hline Known LOAs &  Condition & New LOAs\\
\hline
LOA$(20,5^12^8,2)$ and LOA$(28,7^12^8,2)$&$\frac{20}{5}=\frac{28}{7}$&LOA$(48,12^12^8,2)$\\
LOA$(24,4^13^12^{13},2)$ and LOA$(36,6^13^12^{13},2)$&$\frac{24}{4}=\frac{36}{6}$&LOA$(60,{10}^13^12^{13},2)$\\
LOA$(16,4^12^9,2)$ and LOA$(44,11^12^9,2)$&$\frac{16}{4}=\frac{44}{11}$&LOA$(60,15^12^9,2)$\\
LOA$(20,5^12^8,2)$ and LOA$(44,11^12^8,2)$&$\frac{20}{5}=\frac{44}{11}$&LOA$(64,16^12^8,2)$\\
LOA$(24,4^13^12^{13},2)$ and LOA$(40,5^14^12^{13},2)$&$\frac{24}{3}=\frac{40}{5}$&LOA$(64,8^14^12^{13},2)$\\
LOA$(16,4^12^9,2)$ and LOA$(52,13^12^9,2)$&$\frac{16}{4}=\frac{52}{13}$&LOA$(68,17^12^9,2)$\\
LOA$(28,7^12^{12},2)$ and LOA$(44,11^12^{12},2)$&$\frac{28}{7}=\frac{44}{11}$&LOA$(72,18^12^{12},2)$\\
LOA$(36,9^12^{13},2)$ and LOA$(40,10^12^{13},2)$&$\frac{36}{9}=\frac{40}{10}$&LOA$(76,19^{1}2^{13},2)$\\
LOA$(44,11^12^{13},2)$ and LOA$(36,9^12^{13},2)$&$\frac{44}{11}=\frac{36}{9}$&LOA$(80,20^12^{13},2)$\\
LOA$(36,6^13^12^{13},2)$ and LOA$(48,8^13^12^{13},2)$&$\frac{36}{6}=\frac{48}{8}$&LOA$(84,{14}^13^12^{18},2)$\\
LOA$(36,9^12^{13},2)$ and LOA$(48,12^12^{13},2)$&$\frac{36}{9}=\frac{48}{12}$&LOA$(84,{21}^12^{13},2)$\\
LOA$(28,7^12^{12},2)$ and LOA$(56,7^14^12^{11},2)$&$\frac{28}{2}=\frac{56}{4}$&LOA$(84,{7}^16^12^{11},2)$\\
LOA$(24,6^12^{13},2)$ and LOA$(68,17^12^{13},2)$&$\frac{24}{6}=\frac{68}{17}$&LOA$(92,{23}^12^{13},2)$\\
LOA$(44,11^12^{16},2)$ and LOA$(52,13^12^{16},2)$&$\frac{44}{11}=\frac{52}{13}$&LOA$(96,{24}^12^{16},2)$\\
LOA$(24,4^13^12^{13},2)$ and LOA$(56,7^14^12^{13},2)$&$\frac{24}{3}=\frac{56}{7}$&LOA$(80,10^14^12^{13},2)$\\
LOA$(24,4^13^12^{13},2)$ and LOA$(48,8^13^12^{13},2)$&$\frac{24}{4}=\frac{48}{8}$&LOA$(72,12^13^12^{13},2)$\\
LOA$(52,13^12^{13},2)$ and LOA$(36,9^12^{13},2)$&$\frac{52}{13}=\frac{36}{9}$&LOA$(88,{22}^12^{13},2)$\\
LOA$(40,10^12^{9},2)$ and LOA$(68,17^12^{9},2)$&$\frac{40}{10}=\frac{68}{17}$&LOA$(108,{27}^12^{9},2)$\\
LOA$(36,9^12^{13},2)$ and LOA$(80,20^12^{13},2)$&$\frac{36}{9}=\frac{80}{20}$&LOA$(116,{29}^12^{13},2)$\\
LOA$(40,5^14^12^{25},2)$ and LOA$(80,8^15^12^{25},2)$&$\frac{40}{4}=\frac{80}{8}$&LOA$(120,{12}^15^12^{25},2)$\\
LOA$(72,9^14^12^{37},2)$ and LOA$(56,7^14^12^{37},2)$&$\frac{72}{9}=\frac{56}{7}$&LOA$(128,16^14^12^{37},2)$\\
LOA$(64,16^12^{9},2)$ and LOA$(68,17^12^{9},2)$&$\frac{64}{16}=\frac{68}{17}$&LOA$(132,33^12^{9},2)$\\
LOA$(32,10,2,3)$ and LOA$(48,3^12^{9},3)$&$\frac{32}{2}=\frac{48}{3}$&LOA$(80,5^12^{9},3)$\\
LOA$(32,4^12^5,3)$ and LOA$(48,4^13^12^{4},3)$&$\frac{32}{2}=\frac{48}{3}$&LOA$(80,5^14^12^{4},3)$\\
\hline
\end{tabular}}
\end{center}

\subsection{Hadamard Construction}

In this subsection, we shall use Hadamard matrices to construct large sets of orthogonal arrays.

A \emph{Hadamard matrix of order $n$} is an $n\times n$ matrix $H$ in which
every entry is $\pm1$ such that $HH^T = nI_n$.

Let $S_n=(s_\textbf{x, y})$ be the $2^n\times 2^n$ matrix in which the rows and columns are indexed by $\mathbb{Z}_2^n$ (in lexicographic order) and
$s_\textbf{x, y}=(-1)^{\textbf{x}\cdot \textbf{y}}$ for all $\textbf{x}, \textbf{y}$ $\in \mathbb{Z}_2^n$, where $\cdot$ denotes the inner product. $S_n$ is called \emph{the Sylvester matrix of order $2^n$}.

\begin{lem}[\cite{Stinson}]\label{S_n}
$S_n$ is a Hadamard matrix of order $2^n$.
\end{lem}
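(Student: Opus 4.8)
The plan is to verify the two defining properties of a Hadamard matrix of order $2^n$ directly from the formula $s_{\mathbf{x},\mathbf{y}} = (-1)^{\mathbf{x}\cdot\mathbf{y}}$. That every entry is $\pm 1$ is immediate, so the substance is to show $S_n S_n^T = 2^n I_{2^n}$. First I would fix two row-indices $\mathbf{x},\mathbf{x}' \in \mathbb{Z}_2^n$ and express the corresponding entry of $S_n S_n^T$ as a character sum:
\[
(S_n S_n^T)_{\mathbf{x},\mathbf{x}'} \;=\; \sum_{\mathbf{y}\in\mathbb{Z}_2^n} (-1)^{\mathbf{x}\cdot\mathbf{y}}(-1)^{\mathbf{x}'\cdot\mathbf{y}} \;=\; \sum_{\mathbf{y}\in\mathbb{Z}_2^n} (-1)^{(\mathbf{x}+\mathbf{x}')\cdot\mathbf{y}},
\]
where the last step uses $\mathbf{x}\cdot\mathbf{y} + \mathbf{x}'\cdot\mathbf{y} \equiv (\mathbf{x}+\mathbf{x}')\cdot\mathbf{y} \pmod 2$ together with the fact that $(-1)^a$ depends only on $a \bmod 2$.

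Next I would split into the diagonal and off-diagonal cases. If $\mathbf{x}=\mathbf{x}'$ then $\mathbf{x}+\mathbf{x}' = \mathbf{0}$ in $\mathbb{Z}_2^n$, every summand equals $1$, and the sum is $2^n$. If $\mathbf{x}\neq\mathbf{x}'$, set $\mathbf{z} = \mathbf{x}+\mathbf{x}'\neq\mathbf{0}$; then $\mathbf{y}\mapsto \mathbf{z}\cdot\mathbf{y} \bmod 2$ is a nonzero $\mathbb{Z}_2$-linear functional on $\mathbb{Z}_2^n$, hence surjective onto $\mathbb{Z}_2$ with each fibre of size $2^{n-1}$, so the sum contains $2^{n-1}$ terms equal to $+1$ and $2^{n-1}$ equal to $-1$ and therefore vanishes. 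Combining the two cases yields $S_n S_n^T = 2^n I_{2^n}$, which is exactly the definition of a Hadamard matrix of order $2^n$.

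An alternative route is induction on $n$: one checks from the definition of the inner product and the lexicographic ordering of $\mathbb{Z}_2^n$ that $S_n = S_1 \otimes S_{n-1}$, where $S_1$ is the $2\times 2$ matrix with rows $(1,1)$ and $(1,-1)$, and then invokes the standard fact that a Kronecker product of Hadamard matrices is again Hadamard (Sylvester's construction). I do not anticipate any genuine obstacle; the only point needing a little care is the $\mathbb{Z}_2$ bookkeeping — keeping the exponent arithmetic mod $2$ consistent and correctly invoking surjectivity of a nonzero linear functional — and that is entirely routine.
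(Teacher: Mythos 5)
Your proof is correct. Note that the paper offers no proof of this lemma at all: it is stated with a citation to Stinson's book, so there is no in-paper argument to compare against. Your main route --- computing $(S_nS_n^T)_{\mathbf{x},\mathbf{x}'}=\sum_{\mathbf{y}\in\mathbb{Z}_2^n}(-1)^{(\mathbf{x}+\mathbf{x}')\cdot\mathbf{y}}$, which equals $2^n$ on the diagonal and vanishes off the diagonal because a nonzero $\mathbb{Z}_2$-linear functional has fibres of size $2^{n-1}$ --- is the standard character-orthogonality proof and is complete; the mod-$2$ bookkeeping you flag is handled correctly. Your alternative inductive route is also sound, and it dovetails with the paper: the identity $S_n=S_1\otimes S_{n-1}$ (equivalently $S_n=S_1\otimes\cdots\otimes S_1$) is exactly what the paper proves as its own separate theorem immediately after this lemma, so that route amounts to Sylvester's doubling construction plus the fact that a Kronecker product of Hadamard matrices is Hadamard. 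The direct character-sum argument has the advantage of being self-contained in one step, while the Kronecker route reuses structure the paper needs anyway; either is acceptable.
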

\begin{lem}[\cite{Hedayat-OA,Handbook}]\label{OA=Hadamard}
The following are equivalent:\\
(i) a Hadamard matrix of order $n$ exists;\\
(ii) an OA$(n,n-1,2,2)$ exists;\\
(iii) an OA$(2n,n,2,3)$ exists.
\end{lem}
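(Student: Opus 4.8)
\section*{Proof proposal for Lemma \ref{OA=Hadamard}}

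The plan is to prove the two equivalences (i)$\Leftrightarrow$(ii) and (i)$\Leftrightarrow$(iii) by exhibiting explicit maps that turn a Hadamard matrix into an orthogonal array over $\mathbb{Z}_2$ and back, so that the condition $HH^{T}=nI_{n}$ (equivalently: the columns of $H$ are pairwise orthogonal) translates into the level-balance property of an OA of strength $2$ or $3$. Throughout I identify the entry $+1$ of a $\pm1$ matrix with the level $0$ and the entry $-1$ with the level $1$; then the inner product of two columns equals (agreements)$-$(disagreements), so ``each of the four level-pairs occurs equally often in two columns'' is exactly ``the two $\pm1$ columns are orthogonal and each is individually balanced''. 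Since a Hadamard matrix of order $n>2$ forces $4\mid n$, the indices $n/4$ appearing below are integers; the cases $n\le 2$ are degenerate.

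For (i)$\Rightarrow$(ii): given a Hadamard matrix $H$ of order $n$, I would negate rows so that the first column is all $+1$ (this preserves $HH^{T}=nI_{n}$), delete that column, and read the remaining $\pm1$ array as a $\{0,1\}$-array. For any two of the surviving columns, orthogonality to the all-$+1$ column makes each balanced and their mutual orthogonality makes all four level-pairs occur $n/4$ times; hence we obtain an OA$(n,n-1,2,2)$. Conversely, for (ii)$\Rightarrow$(i), take an OA$(n,n-1,2,2)$, adjoin a new first column of all $0$'s, and pass to $\pm1$-entries to get an $n\times n$ matrix $C$. Strength $1$ (a consequence of strength $2$) makes each old column balanced, hence orthogonal to the new all-$+1$ column; strength $2$ makes any two old columns orthogonal; and every column has squared norm $n$; therefore $C^{T}C=nI_{n}$, and since $C$ is square this gives $CC^{T}=nI_{n}$, i.e.\ $C$ is a Hadamard matrix.

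For (i)$\Rightarrow$(iii): I would form the $2n\times n$ array obtained by placing $-H$ below $H$ and read it as a $\{0,1\}$-array. For a chosen triple of columns and a target pattern $\varepsilon\in\{\pm1\}^{3}$, a row $h$ of $H$ matches $\varepsilon$ iff the corresponding row $-h$ of $-H$ matches $-\varepsilon$, so the number of rows of the stacked array matching $\varepsilon$ equals $N_{\varepsilon}+N_{-\varepsilon}$, where $N_{\varepsilon}$ is the number of rows of $H$ matching $\varepsilon$ on those three columns; I claim this is $n/4$ for every $\varepsilon$, which is strength $3$. For (iii)$\Rightarrow$(i): starting from an OA$(2n,n,2,3)$ called $B$, keep only the $n$ rows whose first entry is $0$ (there are $n$ by strength $1$) and all $n$ columns, and pass to $\pm1$-entries; the first column becomes all $+1$. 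Applying strength $3$ of $B$ to the first column together with one, respectively two, other columns shows that every other column is balanced on these $n$ rows and that any two of them realize all four level-pairs $n/4$ times; so, exactly as in (ii)$\Rightarrow$(i), the resulting $n\times n$ matrix is Hadamard. Combining (i)$\Leftrightarrow$(ii) with (i)$\Leftrightarrow$(iii) then yields the three-way equivalence.

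The one step needing a genuine argument is verifying the claim in (i)$\Rightarrow$(iii), namely $N_{\varepsilon}+N_{-\varepsilon}=n/4$. The clean way is to expand $N_{\varepsilon}$ over the characters $\chi_{S}(\varepsilon)=\prod_{j\in S}\varepsilon_{j}$, $S\subseteq\{1,2,3\}$, of $\mathbb{Z}_2^{3}$: the coefficient attached to $\chi_{S}$ is $\sum_{i}\prod_{j\in S}h_{ij}$ (here $h_{i1},h_{i2},h_{i3}$ denote the entries of row $i$ in the three chosen columns), which equals $n$ when $S=\emptyset$ and vanishes for the three sets of size $2$ precisely because the chosen columns of $H$ are pairwise orthogonal. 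Passing to $N_{\varepsilon}+N_{-\varepsilon}$ annihilates every $\chi_{S}$ with $|S|$ odd (so the coefficients for $|S|=1$ and $|S|=3$, which we do not control, are irrelevant) and leaves $\frac14\bigl(n+0+0+0\bigr)=n/4$, as needed. Equivalently, this is just the solution of the part of the linear system for the eight counts given by $\sum_{\varepsilon}N_{\varepsilon}=n$ and the three relations $\sum_{\varepsilon}\varepsilon_{j}\varepsilon_{k}N_{\varepsilon}=0$; the analogous two-column computation underlies the strength-$2$ checks in the other direction. Everything else is routine bookkeeping with row negations, column deletions, and slices, and I expect no further obstacle.
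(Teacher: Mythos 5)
Your proof is correct: the normalization/column-deletion argument for (i)$\Leftrightarrow$(ii), the doubling construction stacking $H$ over $-H$ for (i)$\Rightarrow$(iii), the slice-by-first-symbol argument for (iii)$\Rightarrow$(i), and the character (or linear-system) computation showing $N_{\varepsilon}+N_{-\varepsilon}=n/4$ are all sound. The paper does not prove this lemma at all—it is quoted from Hedayat–Sloane–Stufken and the Handbook—and your argument is essentially the standard proof found in those references, so there is nothing further to compare.
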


Suppose $H_1 = (h_{i,j})$ is a Hadamard matrix of order $n_1$
and $H_2$ is a Hadamard matrix of order $n_2$.
We define the \emph{Kronecker Product} $H_1\otimes H_2$ to be the matrix of order $n_1n_2$ obtained by replacing every entry $h_{i,j}$ of $H_1$ by the $n_2\times n_2$ matrix $h_{i,j}H_2$ (where $xH_2$ denotes the matrix obtained
from $H_2$ by multiplying every entry by $x$).

\begin{thm}
$S_n=\underbrace{S_1\otimes S_1\otimes\cdots\otimes S_1}_{n}$.
\end{thm}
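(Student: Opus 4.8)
The plan is to prove the identity $S_n=\underbrace{S_1\otimes S_1\otimes\cdots\otimes S_1}_{n}$ by induction on $n$, the essential point being that the Sylvester matrix respects the tensor decomposition of $\mathbb{Z}_2^n$ as $\mathbb{Z}_2^{n-1}\times\mathbb{Z}_2$. The base case $n=1$ is immediate from the definition. For the inductive step I would write each $\mathbf{x}\in\mathbb{Z}_2^n$ in lexicographic order as $\mathbf{x}=(\mathbf{x}',x_n)$ with $\mathbf{x}'\in\mathbb{Z}_2^{n-1}$ and $x_n\in\mathbb{Z}_2$, and similarly $\mathbf{y}=(\mathbf{y}',y_n)$; the key arithmetic fact is that the inner product splits as $\mathbf{x}\cdot\mathbf{y}=\mathbf{x}'\cdot\mathbf{y}'+x_ny_n$, hence
\begin{equation*}
s_{\mathbf{x},\mathbf{y}}=(-1)^{\mathbf{x}\cdot\mathbf{y}}=(-1)^{\mathbf{x}'\cdot\mathbf{y}'}\cdot(-1)^{x_ny_n}.
\end{equation*}

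The second step is to match this factorization against the definition of the Kronecker product. Lexicographic order on $\mathbb{Z}_2^n$ groups the indices so that the block structure is exactly "blocks indexed by $\mathbf{x}'$, with inner coordinate $x_n$"; that is, the $(\mathbf{x},\mathbf{y})$ entry of $S_{n-1}\otimes S_1$ is by definition $(S_{n-1})_{\mathbf{x}',\mathbf{y}'}\cdot(S_1)_{x_n,y_n}=(-1)^{\mathbf{x}'\cdot\mathbf{y}'}\cdot(-1)^{x_ny_n}$. Comparing with the displayed factorization gives $S_n=S_{n-1}\otimes S_1$. Then by the induction hypothesis $S_{n-1}=\underbrace{S_1\otimes\cdots\otimes S_1}_{n-1}$, and since the Kronecker product is associative, $S_n=(\underbrace{S_1\otimes\cdots\otimes S_1}_{n-1})\otimes S_1=\underbrace{S_1\otimes\cdots\otimes S_1}_{n}$, completing the induction.

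The only genuinely delicate point—and the step I would treat most carefully—is making sure the indexing conventions line up: one must check that "lexicographic order on $\mathbb{Z}_2^n$" really does correspond, under the identification $\mathbb{Z}_2^n\cong\mathbb{Z}_2^{n-1}\times\mathbb{Z}_2$, to the row/column ordering that the Kronecker product $H_1\otimes H_2$ produces (outer index from $H_1$ varying slowest, inner index from $H_2$ varying fastest). This is precisely the convention in the definition given before the theorem: replacing each entry $h_{i,j}$ of $H_1$ by the block $h_{i,j}H_2$ puts the $H_1$-index on the outside. Lexicographic order on binary $n$-tuples has the first coordinate varying slowest and the last varying fastest, so splitting off the last coordinate as the "inner" factor is consistent; one could equally split off the first coordinate and write $S_n=S_1\otimes S_{n-1}$, but the version $S_n=S_{n-1}\otimes S_1$ is the one matching lexicographic order with the last bit innermost. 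Once this bookkeeping is stated cleanly, the rest is a one-line computation. As a sanity check one may also verify consistency with Lemma \ref{S_n}: each $S_1$ is a Hadamard matrix of order $2$, the Kronecker product of Hadamard matrices is Hadamard, and iterating gives a Hadamard matrix of order $2^n$, in agreement with the claimed identity.
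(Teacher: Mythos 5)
Your proof is correct and takes essentially the same route as the paper: induction on $n$ combined with the observation that lexicographic ordering of $\mathbb{Z}_2^n$ matches the block structure of the Kronecker product, via the factorization $(-1)^{\mathbf{x}\cdot\mathbf{y}}=(-1)^{\mathbf{x}'\cdot\mathbf{y}'}(-1)^{x_n y_n}$. The only (harmless) difference is that you peel off the last coordinate and make the entrywise computation explicit, giving the recursion $S_n=S_{n-1}\otimes S_1$, whereas the paper peels off the first coordinate and records the block identity $S_{k+1}=S_1\otimes S_k=\left(\begin{smallmatrix} S_k & S_k\\ S_k & -S_k \end{smallmatrix}\right)$.
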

\begin{proof}
The proof is by mathematical induction on $n$.

When $n=1$, there is nothing to do.

When $n=2$, it is clear that
\begin{center}
$S_2=\bordermatrix{&00&01&10&11\cr
00&1&1&1&1\cr
01&1&-1&1&-1\cr
10&1&1&-1&-1\cr
11&1&-1&-1&1\cr}=S_1\otimes S_1$,
\end{center}
the conclusion immediately holds.

Suppose that $n\leq k$ and that the theorem is true for $n$. Now we shall prove that the conclusion is also true for $n=k+1$, that is,
\begin{center}
$S_{k+1}=S_1\otimes S_k=\underbrace{S_1\otimes S_1\otimes\cdots\otimes S_1}_{k+1}$.
\end{center}
By assumption, let the rows and columns of $S_k$ be labeled with $a_1a_2\cdots a_k\in\mathbb{Z}_2^{k}$ in lexicographic order. It is clear that $0a_1a_2\cdots a_k$, $1a_1a_2\cdots a_k$ (where $a_1a_2\cdots a_k\in\mathbb{Z}_2^{k}$ in lexicographic order) are in lexicographic order of $\mathbb{Z}_2^{k+1}$. So by the definition of $S_{k+1}$, we have
\begin{center}
$S_{k+1}=\bordermatrix{&0a_1\cdots a_k&1b_1\cdots b_k\cr
0a_1\cdots a_k&S_k&S_k\cr
1b_1\cdots b_k&S_k&-S_k\cr}=S_1\otimes S_k=\underbrace{S_1\otimes S_1\otimes\ldots\otimes S_1}_{k+1}.$
\end{center}
This completes the proof.
\end{proof}

\begin{thm}\label{LOA$(2^n)$}
There exists an LOA$(2^n,k,2,2)$ for any integer $n\geq2$ and $n\leq k\leq 2^n-1$.
\end{thm}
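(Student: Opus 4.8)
The plan is to extract from the Sylvester matrix $S_n$ a single orthogonal array that contains a distinguished ``identity block'' of columns, and then to feed this array into Lemma~\ref{3-202}. First I would convert $S_n$ into an orthogonal array. By Lemma~\ref{S_n}, $S_n$ is a Hadamard matrix of order $2^n$, and the row and the column of $S_n$ indexed by $\mathbf{0}\in\mathbb{Z}_2^n$ are both all-ones. Deleting the column indexed by $\mathbf{0}$ and replacing each $+1$ by $0$ and each $-1$ by $1$ produces, via the equivalence (i)$\Leftrightarrow$(ii) of Lemma~\ref{OA=Hadamard}, an OA$(2^n,2^n-1,2,2)$, which I will call $A$. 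Its rows are indexed by $\mathbf{x}\in\mathbb{Z}_2^n$, its columns by the nonzero $\mathbf{y}\in\mathbb{Z}_2^n$, and the $(\mathbf{x},\mathbf{y})$-entry is $\mathbf{x}\cdot\mathbf{y}\bmod 2$.

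Next I would single out the $n$ columns of $A$ indexed by the standard unit vectors $e_1,\dots,e_n\in\mathbb{Z}_2^n$. Since $\mathbf{x}\cdot e_i=x_i$, the entry of $A$ in row $\mathbf{x}$ and column $e_i$ is exactly $x_i$, so the $n$-tuple read off these $n$ columns in row $\mathbf{x}$ is $\mathbf{x}$ itself. As $\mathbf{x}$ ranges over all of $\mathbb{Z}_2^n$, every $n$-tuple over $\mathbb{Z}_2$ therefore occurs exactly once among the columns $e_1,\dots,e_n$, and of course $2^n=\underbrace{2\cdot 2\cdots 2}_{n}$. This is precisely the hypothesis required to apply Lemma~\ref{3-202} to $A$: taking $\{i_1,\dots,i_n\}$ to be the columns $e_1,\dots,e_n$ together with any further $k-n$ columns $j_1,\dots,j_{k-n}$ of $A$ — which is possible exactly when $n\le k\le 2^n-1$, the number of columns of $A$ — Lemma~\ref{3-202} yields a large set with $k$ binary factors and strength $2$, i.e. an LOA$(2^n,k,2,2)$, as claimed. (When $k=n$ one may instead simply observe that the asserted object is the trivial OA$(2^n,n,2,n)$ partitioned according to the block $e_1,\dots,e_n$.)

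The one step that is more than bookkeeping is the middle observation: that the unit-vector columns of the Sylvester matrix reproduce the row label and hence supply the ``each $l$-tuple exactly once'' block demanded by Lemmas~\ref{3-20}--\ref{3-202}. Everything else — the Hadamard-to-OA passage, the deletion of columns, and the range $n\le k\le 2^n-1$ matching the number of available nonzero columns of $S_n$ — follows mechanically. If one prefers to avoid Lemma~\ref{3-202}, an equivalent route is to apply Lemma~\ref{3-20} to $A$ to get an LOA$(2^n,2^n-1,2,2)$ and then delete $2^n-1-k$ of the columns other than $e_1,\dots,e_n$ from every member of the large set; I would present the Lemma~\ref{3-202} version since it handles the whole range $n\le k\le 2^n-1$ in one stroke.
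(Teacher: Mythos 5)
Your proposal is correct and follows essentially the same route as the paper: delete the all-ones column of the Sylvester matrix $S_n$ to obtain an OA$(2^n,2^n-1,2,2)$, observe that the $n$ columns indexed by the unit vectors reproduce the row label so that every $n$-tuple occurs exactly once there, and then invoke Lemma~\ref{3-202} to cover the whole range $n\leq k\leq 2^n-1$. The only cosmetic difference is that you compute the entries directly as inner products $\mathbf{x}\cdot e_i=x_i$, whereas the paper argues via the distinctness of the $\pm1$ tuples in those columns; these are the same observation.
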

\begin{proof}
By Lemma \ref{S_n}, $S_n$ is a Hadamard matrix of order $2^n$. We can get an OA$(2^n,2^n-1,2,2)$, denoted by $A$, by deleting the first column
(labeled with $00\cdots 0$) of $S_n$. By Lemma \ref{3-201} and Lemma \ref{3-202},
to prove the conclusion, it is sufficient to show that there exist $n$ columns in
$A$ such that each $n$-tuple of $\mathbb{Z}_2$ occurs exactly once in these columns of $A$ as a row.
Now we consider $n$ columns labeled with $00 \cdots 01$, $00 \cdots 01 0$, $\ldots$, $10 \cdots 00$ (each $n$-tuple contains exactly one 1) of $A$.
We choose any two distinct rows labeled with $b_1b_2\cdots b_n$ and $c_1c_2\cdots c_n$ respectively, where $b_i, c_i\in \mathbb{Z}_2, 1\leq i\leq n$.
Then we have two $n$-tuples $((-1)^{b_n}, (-1)^{b_{n-1}}, \ldots, (-1)^{b_1})$ and
 $((-1)^{c_n}, (-1)^{c_{n-1}}, \ldots, (-1)^{c_1})$
in two rows labeled with $b_1b_2\cdots b_n$ and $c_1c_2\cdots c_n$ and $n$ columns labeled with $00 \cdots 01$, $00 \cdots 01 0$, $\ldots$, $10 \cdots 00$.
It is clear that
\begin{center}
$((-1)^{b_n}, (-1)^{b_{n-1}}, \ldots, (-1)^{b_1})\neq((-1)^{c_n}, (-1)^{c_{n-1}}, \ldots, (-1)^{c_1})$
\end{center}
 since $b_1b_2\cdots b_n\neq c_1c_2\cdots c_n$.
So any two $n$-tuples are distinct in the $n$ columns labeled with $00 \cdots 01$, $00 \cdots 01 0$, $\ldots$, $10 \cdots 00$ of $A$. It follows that each $n$-tuples occurs exactly once in these columns of $A$ as a row.
This completes the proof.
\end{proof}
\begin{rmk}
Theorem \ref{LOA$(2^n)$} had been shown via finite fields method by Chen and Niu firstly, see Theorem 2.21 in \cite{chen23}. Here we provide a new
proof method of Theorem \ref{LOA$(2^n)$} by Hadamard matrix.
\end{rmk}

\begin{thm}\label{LOA-level2-str3}
There exists an LOA$(2^{n+1},k,2,3)$ for any integer $n\geq2$
and $n+1\leq k\leq 2^n$.
\end{thm}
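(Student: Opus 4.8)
The plan is to mirror the Hadamard-matrix argument used for Theorem~\ref{LOA$(2^n)$}, but now starting from the equivalence (i)$\Leftrightarrow$(iii) in Lemma~\ref{OA=Hadamard} together with Lemma~\ref{S_n}. First I would take $n\geq 2$ and form the Sylvester matrix $S_n$, a Hadamard matrix of order $2^n$ by Lemma~\ref{S_n}. Applying the standard construction behind Lemma~\ref{OA=Hadamard}(iii), I would build the $2^{n+1}\times 2^n$ array $B$ whose top half is the $\{0,1\}$-version of $S_n$ and whose bottom half is its bitwise complement (equivalently, stack $S_n$ on top of $-S_n$ and translate $\pm1\mapsto\{0,1\}$); this $B$ is an OA$(2^{n+1},2^n,2,3)$. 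By Lemmas~\ref{3-201} and~\ref{3-202}, it then suffices to exhibit a set of $n+1$ columns of $B$ in which every $(n+1)$-tuple over $\mathbb{Z}_2$ occurs exactly once as a row.

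The natural candidate set of columns is the $n$ columns of $S_n$ indexed by the weight-one vectors $00\cdots01, 00\cdots010,\ldots,10\cdots00$ — exactly the columns that worked in the proof of Theorem~\ref{LOA$(2^n)$} — together with the one extra column that records which half of $B$ a row lies in (i.e. the column that is constant $0$ on the top copy of $S_n$ and constant $1$ on the bottom copy; in the usual presentation this is the ``all-ones'' column of the Hadamard matrix that one keeps rather than deletes). I would index a row of $B$ by a pair $(\varepsilon,\mathbf{x})$ with $\varepsilon\in\mathbb{Z}_2$ the half-indicator and $\mathbf{x}=b_1b_2\cdots b_n\in\mathbb{Z}_2^n$ the row-label inside that copy of $S_n$. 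On the chosen weight-one columns the row $(\varepsilon,\mathbf{x})$ produces, after the $\pm1\mapsto\mathbb{Z}_2$ translation, the tuple $(b_n+\varepsilon, b_{n-1}+\varepsilon,\ldots,b_1+\varepsilon)$, and on the extra column it produces $\varepsilon$. Hence the full $(n+1)$-tuple read off these columns is a bijective affine image of $(\varepsilon,\mathbf{x})\in\mathbb{Z}_2^{n+1}$: from $\varepsilon$ and $(b_i+\varepsilon)_{i}$ one recovers $\varepsilon$ and then each $b_i$. Since $B$ has exactly $2^{n+1}$ rows and the map is injective, every $(n+1)$-tuple occurs exactly once, which is precisely the hypothesis needed to invoke Lemmas~\ref{3-20}--\ref{3-202} and conclude the existence of LOA$(2^{n+1},k,2,3)$ for all $n+1\le k\le 2^n$.

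I expect the only real subtlety to be bookkeeping: being careful about the exact form of the OA$(2n,n,2,3)$ coming from a Hadamard matrix of order $n$ (which column is kept, whether the bottom block is $-S_n$ or the row-complement, and how the $\pm1\mapsto\{0,1\}$ correspondence interacts with ``row-complement = add the all-ones vector''), and then verifying that the weight-one columns of $S_n$ plus the half-indicator column really do give an invertible affine map on $\mathbb{Z}_2^{n+1}$. None of this is deep — it is the strength-$2$ computation from the proof of Theorem~\ref{LOA$(2^n)$} with one coordinate appended — so the argument should go through cleanly once the indexing conventions are pinned down. An alternative, essentially equivalent, route would be to apply Lemma~\ref{3-20} directly to a known OA$(2^{n+1},2^n,2,3)$ from the literature, but the Sylvester-matrix description makes the required $(n+1)$ columns completely explicit, so I would present it that way.
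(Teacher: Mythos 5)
Your proposal is correct and follows essentially the same route as the paper: stack $S_n$ on its complement to get an OA$(2^{n+1},2^n,2,3)$ (the paper cites Theorem 2.24 of \cite{Hedayat-OA} for this), then take the $n$ weight-one columns together with the column labeled $00\cdots0$ (which after stacking is exactly your half-indicator column) and observe that every $(n+1)$-tuple occurs exactly once in those columns, so Lemma \ref{3-202} applies. Your explicit affine-bijection computation $(\varepsilon,\mathbf{x})\mapsto(\varepsilon,b_n+\varepsilon,\ldots,b_1+\varepsilon)$ is just a more detailed write-up of the step the paper dispatches with ``in a similar way to the proof of Theorem \ref{LOA$(2^n)$}''.
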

\begin{proof}
Let
\begin{center}
$M=\left(\begin{matrix}
S_n \\
\overline{S}_n\\
\end{matrix}\right),$
\end{center}
where $\overline{S}_n$ is obtained simply by interchanging the two symbols $1$ and $-1$
in the $S_n$. Then $M$ is an OA$(2^{n+1},k,2,3)$ over $\{-1,1\}$ by Theorem 2.24 in \cite{Hedayat-OA}. In a similar way to the proof of Theorem \ref{LOA$(2^n)$}, it is easy to check that any two $(n+1)$-tuples are distinct in the $n$ columns labeled with $00\cdots0$, $00 \cdots 01$, $00 \cdots 01 0$, $\ldots$, $10 \cdots 00$ of $M$. It follows that each $(n+1)$-tuples occurs exactly once in these columns of $M$ as a row.
By Lemma \ref{3-202}, there exists an LOA$(2^{n+1},k,2,3)$ for $n\geq2$ and $n+1\leq k\leq 2^n$.
\end{proof}

\subsection{Finite Field Construction}

Let $\mathbb{F}_q$ be a finite field of order $q$.
The following construction can be found in \cite{chen23}.

\begin{con}[\cite{chen23}]\label{Suen}
Suppose that m and l are two positive integers with $m \leq l$. If there exists
 an $m \times l$ matrix $M$, having entries from $\mathbb{F}_q$, which satisfies the following two properties:

 1. Any $t$ columns of $M$ are linearly independent, where $2\leq t \leq l$.

 2. There exist $m$ columns of $M$ such that they are linearly independent. \\
 Then there exists an LOA$(q^m,k,q,t)$, where $t\leq m \leq k \leq l$.
\end{con}

In order to utilise Construction \ref{Suen} well, we need the following lemma.

\begin{lem}\label{bukeyue}
There exists at least one element $a\in \mathbb{F}_{q}$ such that $g(x,y)=-(x^2+axy+y^2)\neq0$ when $(x,y)\neq(0,0)$.
\end{lem}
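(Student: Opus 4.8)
The plan is to convert the anisotropy condition into a statement about roots of a single quadratic polynomial and then count. First I would observe that $g(x,y)=0$ for some $(x,y)\neq(0,0)$ if and only if $x^{2}+axy+y^{2}=0$ has a nontrivial solution. If $(x,y)$ is such a solution, then $y\neq0$ (since $y=0$ forces $x^{2}=0$, hence $x=0$), so dividing by $y^{2}$ and setting $r=x/y$ gives $r^{2}+ar+1=0$; conversely any root $r\in\mathbb{F}_{q}$ of $f_{a}(T)=T^{2}+aT+1$ produces the nontrivial zero $(r,1)$ of $g$. Thus $a$ has the required property exactly when $f_{a}$ has no root in $\mathbb{F}_{q}$.

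Next I would identify the ``bad'' values of $a$. Because $\deg f_{a}=2$, it has a root in $\mathbb{F}_{q}$ iff it splits over $\mathbb{F}_{q}$, say $f_{a}(T)=(T-r)(T-r')$. Matching constant terms gives $rr'=1$, so $r\neq0$ and $r'=r^{-1}$, and matching the linear coefficient gives $a=-(r+r^{-1})$. Hence the set of bad $a$ is contained in $B=\{-(r+r^{-1}):r\in\mathbb{F}_{q}^{*}\}$. Since $B$ is the image of $\mathbb{F}_{q}^{*}$ under a map, $\lvert B\rvert\le q-1<q=\lvert\mathbb{F}_{q}\rvert$, so $\mathbb{F}_{q}\setminus B\neq\emptyset$; any $a$ in this complement makes $f_{a}$ root-free and therefore makes $g(x,y)\neq0$ for every $(x,y)\neq(0,0)$.

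There is no genuinely hard step; the one thing to be careful about is that one should \emph{not} ``complete the square'' and argue via the discriminant $a^{2}-4$, since that collapses in characteristic $2$ — working with the companion polynomial $T^{2}+aT+1$ keeps the argument characteristic-free. If one wants the sharper statement, $\lvert B\rvert$ can be computed exactly: the map $r\mapsto r+r^{-1}$ is at most two-to-one on $\mathbb{F}_{q}^{*}$ (as $r+r^{-1}=s+s^{-1}$ forces $(r-s)(rs-1)=0$), and its fixed points satisfy $r^{2}=1$, giving $\lvert B\rvert=(q+1)/2$ for odd $q$ and $\lvert B\rvert=q/2$ for even $q$; in both cases $\lvert B\rvert<q$, which re-proves the lemma and quantifies how many admissible $a$ there are. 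This refinement is not needed for the statement itself.
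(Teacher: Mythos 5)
Your proposal is correct and follows essentially the same route as the paper: normalize the nontrivial zero by the nonzero coordinate to reduce to the quadratic $T^{2}+aT+1$, observe that the bad values of $a$ form the set $\{-(r+r^{-1}):r\in\mathbb{F}_{q}^{*}\}$ (the paper's set $\mathcal{A}=\{-z^{-1}(z^{2}+1):z\in\mathbb{F}_{q}^{*}\}$), and conclude by the cardinality bound $q-1<q$. Your remarks on avoiding the discriminant in characteristic $2$ and on the exact count of bad values are correct refinements but not needed.
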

\begin{proof}
Without loss of generality, let $x\neq 0$, then we have
\begin{center}
$g(x,y)=-(x^2+axy+y^2)=-x^2((\frac{y}{x})^2+a\frac{y}{x}+1)\neq0$.
\end{center}
Set $\frac{y}{x}=z$. To prove the conclusion, it suffices to prove that
there exists at least one element $a\in \mathbb{F}_{q}$ such that
\begin{equation}
z^2+az+1\neq0.
\end{equation}
Denote $\mathbb{F}_{q}^*=\mathbb{F}_{q}\setminus\{0\}$. When $z=0$, it is clear that $z^2+az+1=1\neq0$ for any $a\in \mathbb{F}_{q}$. When $z\neq0$, let $\mathcal{A}=\{-z^{-1}(z^2+1)\ |\ z\in\mathbb{F}_{q}^*\}$. Obviously, $|\mathcal{A}|\leq q-1$. So there exists at least one element $a\in\mathbb{F}_{q}$ such that
$a\not\in\mathcal{A}$. It follows that there exists at least one element $a\in\mathbb{F}_{q}$ such that $a\neq-z^{-1}(z^2+1)$, that is, $z^2+az+1\neq0$.
This completes the proof.
\end{proof}

\begin{thm}\label{LOAq4-3}
Let $q$ be a prime power with $q\geq3$, then there exists an LOA$(q^4,k,q,3)$, where $4\leq k\leq q^2+1$.
\end{thm}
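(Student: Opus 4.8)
The plan is to invoke Construction~\ref{Suen} with $m=4$ and $l=q^2+1$, so I must build a $4\times(q^2+1)$ matrix $M$ over $\mathbb{F}_q$ whose every $3$ columns are linearly independent and which contains $4$ linearly independent columns. The natural candidate is a matrix whose columns are (a projective version of) the points of a conic or a rational normal curve: for each $z\in\mathbb{F}_q$ take the column $(1,z,z^2,g(z))^{T}$ for a suitable quadratic-type entry, plus one or two extra "points at infinity'' to reach $q^2+1$ columns. Concretely, I would index $q^2$ of the columns by pairs $(x,y)\in\mathbb{F}_q^2$ — or rather by the $q+1$ directions together with... — actually the cleaner route, matching the count $q^2+1$, is to take the columns indexed by $\mathbb{F}_{q^2}\cup\{\infty\}$: identify $\mathbb{F}_{q^2}$ with $\mathbb{F}_q^2$ via a basis, and to the point corresponding to $(x,y)$ assign the column $(x,\,y,\,?,\,?)^{T}$. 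So the first step is to write down the explicit column set and confirm there are exactly $q^2+1$ of them.

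The key algebraic step is choosing the last two coordinates. Following the shape of Lemma~\ref{bukeyue}, I expect the columns to be, for $(x,y)\in\mathbb{F}_q^2$,
\[
\begin{pmatrix} x\\ y\\ x^2+axy+y^2\\ \text{(cubic-type form)}\end{pmatrix},
\]
or more likely the construction only needs three coordinates built from $(x,y)$ plus a fourth to separate the directions, with $a$ chosen by Lemma~\ref{bukeyue} precisely so that the quadratic form $x^2+axy+y^2$ is anisotropic, i.e.\ vanishes only at $(0,0)$. Then I would verify Property~1 of Construction~\ref{Suen}: any $3$ of these columns are linearly independent. This amounts to showing a certain $3\times 3$ (or $4\times 3$) determinant built from three distinct parameter values is nonzero; the anisotropy of $x^2+axy+y^2$ guaranteed by Lemma~\ref{bukeyue} is exactly what kills the degenerate cases. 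Property~2 is easy: exhibit four specific columns (e.g.\ corresponding to $0$, two basis vectors, and $\infty$) and compute a single nonzero $4\times4$ determinant.

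The main obstacle will be getting the explicit entries of $M$ right so that the "$3$-independence'' determinant factors cleanly into products of differences of parameters times a value of the anisotropic quadratic form — in particular handling the column(s) "at infinity'' uniformly with the affine columns, since those are where rational-normal-curve arguments usually need a separate check. Once the determinant is shown to be a nonzero product of terms of the form $(\text{parameter differences})\cdot g(\cdot,\cdot)$ with $g$ as in Lemma~\ref{bukeyue}, linear independence of any $3$ columns follows immediately, Construction~\ref{Suen} applies with $t=3$, $m=4$, $l=q^2+1$, and we obtain an LOA$(q^4,k,q,3)$ for all $4\le k\le q^2+1$, which is the claim.
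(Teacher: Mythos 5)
Your skeleton coincides with the paper's own route: pick $a$ by Lemma \ref{bukeyue} so that $g(x,y)=-(x^2+axy+y^2)$ vanishes only at $(0,0)$, take the $q^2+1$ columns $B_{u,v}=(\alpha_u,\ \alpha_v,\ g(\alpha_u,\alpha_v),\ 1)^T$ for $(\alpha_u,\alpha_v)\in\mathbb{F}_q^2$ together with one column at infinity $B_0=(0,0,1,0)^T$, verify Property 2 of Construction \ref{Suen} by a single $4\times4$ determinant, and apply Construction \ref{Suen} with $m=4$, $t=3$, $l=q^2+1$. So the entries you leave undetermined are settled exactly as you guessed (the fourth coordinate is the constant $1$, not a cubic form), and the count $q^2+1$ comes from $\mathbb{F}_q^2\cup\{\infty\}$, not from a rational normal curve.

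The genuine gap is that the heart of the theorem --- every three columns of this $4\times(q^2+1)$ matrix are linearly independent --- is only asserted, and it does not reduce to one determinant that ``factors cleanly into parameter differences times values of $g$.'' Since the columns lie in $\mathbb{F}_q^4$, rank $3$ of a $4\times3$ submatrix must be certified by exhibiting some nonvanishing $3\times3$ minor, and which minor works depends on the triple; the paper needs a four-case analysis. The delicate cases are triples containing the column $(0,0,0,1)^T$ (the image of the origin, where $g=0$) and triples of three affine columns whose minor from the rows $(x,y,1)$ vanishes. In the latter case one writes the third column as an affine combination (coefficients summing to $1$, both nonzero) of the other two, substitutes into the minor built from the $g$-row, and the assumption that this minor also vanishes collapses, after expanding $g$, to $g(\alpha_{u_i}-\alpha_{u_j},\alpha_{v_i}-\alpha_{v_j})=0$, contradicting anisotropy; a separate subcase ($\alpha_{u_i}=\alpha_{u_j}$) is handled by a Vandermonde-type minor in the second row. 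Your proposal correctly predicts that anisotropy is what kills the degenerate configurations, but until these verifications (or an equivalent ovoid/cap argument showing no three of the $q^2+1$ points are collinear in $PG(3,q)$) are actually carried out, what you have is an outline of the paper's proof rather than a proof.
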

\begin{proof}
By Lemma \ref{bukeyue}, let $g(x,y)=-(x^2+axy+y^2)$ be the quadratic polynomial over $\mathbb{F}_q$, where $a\in \mathbb{F}_q$ and $g(x,y)\neq0$ when $(x,y)\neq(0,0)$.
Denote $\mathbb{F}_q=\{\alpha_0=0,\alpha_1=1,\alpha_2,\ldots,\alpha_{q-1}\}$. Let
 $$B_0=(0\ \ 0\ \ 1\ \ 0)^T, \ \ B_{u,v}=(\alpha_{u}\ \ \alpha_{v}\ \ g(\alpha_{u},\alpha_{v}) \ \ 1)^T, \ \ 0\leq u, v\leq q-1,$$
where $\alpha_{u},\alpha_{v} \in \mathbb{F}_q$ and
the symbol `` $T$ " represents the transposition of a vector.

Now we construct a $4\times (q^2+1)$ matrix $M$ as follows.
 \begin{center}
$M=(B_0\ \ B_{0,0} \ \ B_{0,1} \ \ \cdots\ \ B_{0,q-1}\ \ B_{1,0} \ \ \cdots \ \ B_{1,q-1} \ \ \cdots\ \ B_{q-1,0}\ \ \cdots \ \ B_{q-1,q-1})$
\end{center}

\ \ \ \ \ \ \ \ \ $=(A_0\ \ A_1 \ \ \cdots \ \ A_{q^2}).$

\noindent
It is easy to compute that the determinant of matrix $\left(A_0\ \ A_1\ \ A_2\ \ A_{q+1}\right)$ is
$$|A_0\ \ A_1\ \ A_2\ \ A_{q+1}|=|B_0\ \ B_{0,0}\ \ B_{0,1}\ \ B_{1,0}|=\left |\begin{smallmatrix}
  0 & 0 & 0 &1\\
  0 & 0 & 1&0 \\
  1 & 0&-1&-1 \\
  0 & 1& 1&1 \\
\end{smallmatrix}\right|
=-1\neq0.$$
So there exist $4$ columns of $M$ such that they are linearly independent. Next we shall
distinguish the following $4$ cases to show that any three columns of $M$ are linearly independent.

\textbf{Case 1:} When $i=0$, $j=1$, $k\in\{2,\cdots,q^2\}$. We have
$$(A_{i}\ \ A_{j}\ \ A_{k})=\left(
        \begin{smallmatrix}
              0 & 0 & \alpha_{u_k} \\
              0 & 0& \alpha_{v_k}\\
              1& 0 &g(\alpha_{u_{k}},\alpha_{v_{k}})\\
              0 & 1 & 1  \\
        \end{smallmatrix}
        \right).$$
It is clear that $(\alpha_{u_{k}},\alpha_{v_{k}})\neq(0,0)$,
it follows that the rank of $(A_{i}\ \ A_{j}\ \ A_{k})$ is $3$.

\textbf{Case 2:}\ When $i=0$, $\{j,k\}\subseteq\{2,\cdots,q^2\}$. We have
$$(A_{i}\ \ A_{j}\ \ A_{k})=\left(
        \begin{smallmatrix}
              0 & \alpha_{u_j} & \alpha_{u_k} \\
              0 & \alpha_{v_j} & \alpha_{v_k}\\
              1& g(\alpha_{u_{j}},\alpha_{v_{j}}) &g(\alpha_{u_{k}},\alpha_{v_{k}})\\
              0 & 1 & 1  \\
        \end{smallmatrix}
        \right).$$
It is clear that $(\alpha_{u_j},\alpha_{v_j})\neq(\alpha_{u_k},\alpha_{v_k})$, then
the rank of $(A_{i}\ \ A_{j}\ \ A_{k})$ is $3$.

\textbf{Case 3:}\  When $i=1$, $\{j,k\}\subseteq\{2,\cdots,q^2\}$. We have
$$(A_{i}\ \ A_{j}\ \ A_{k})=\left(
        \begin{smallmatrix}
              0 & \alpha_{u_j} & \alpha_{u_k} \\
              0 & \alpha_{v_j} & \alpha_{v_k}\\
              0& g(\alpha_{u_{j}},\alpha_{v_{j}}) &g(\alpha_{u_{k}},\alpha_{v_{k}})\\
              1 & 1 & 1  \\
        \end{smallmatrix}
        \right).$$

If the determinant
$\left |\begin{smallmatrix}
0 & \alpha_{u_{j}} & \alpha_{u_{k}} \\
0& \alpha_{v_{j}} &\alpha_{v_{k}}\\
  1 & 1& 1 \\
\end{smallmatrix}\right|
\neq0$, then the rank of $(A_{i}\ \ A_{j}\ \ A_{k})$ is $3$.

If $\left |\begin{smallmatrix}
0 & \alpha_{u_{j}} & \alpha_{u_{k}} \\
0& \alpha_{v_{j}} &\alpha_{v_{k}}\\
  1 & 1& 1 \\
\end{smallmatrix}\right|
=0$, then the three column vectors $(0,0,1)^T, (\alpha_{u_{j}},\alpha_{v_{j}},1)^T$ and $(\alpha_{u_{k}},\alpha_{v_{k}},1)^T$ are linear correlation.
So there exist two integers $x$ and $y$ such that
$$(\alpha_{u_{k}},\alpha_{v_{k}},1)^T=x(0,0,1)^T+y(\alpha_{u_{j}},\alpha_{v_{j}},1)^T.$$
It follows that
\begin{equation}
\left\{
          \begin{array}{l}
              \alpha_{u_{k}}=y\alpha_{u_{j}},\\
\alpha_{v_{k}}=y\alpha_{v_{j}},\\
1=x+y.
          \end{array}
       \right.
\end{equation}
Obviously, we have $A_j=A_k$ if $x=0$ and $A_i=A_k$ if $y=0$, which contradict the fact
that $A_i, A_j$ and $A_k$ are pair distinct. Thus $x,y\not\in\{0,1\}$.

We shall prove that
\begin{center}
$\left |\begin{smallmatrix}
   0& \alpha_{u_{j}} & \alpha_{u_{k}} \\
  0 & g(\alpha_{u_{j}},\alpha_{v_{j}}) &g(\alpha_{u_{k}},\alpha_{v_{k}})\\
  1 & 1& 1 \\
\end{smallmatrix}\right|\neq0$
or
$\left |\begin{smallmatrix}
   0& \alpha_{v_{j}} & \alpha_{v_{k}} \\
  0 & g(\alpha_{u_{j}},\alpha_{v_{j}}) &g(\alpha_{u_{k}},\alpha_{v_{k}})\\
  1 & 1& 1 \\
\end{smallmatrix}\right|\neq0$
\end{center}
by reduction to absurdity.
Suppose that
\begin{equation}\left |\begin{smallmatrix}
   0& \alpha_{u_{j}} & \alpha_{u_{k}} \\
  0 & g(\alpha_{u_{j}},\alpha_{v_{j}}) &g(\alpha_{u_{k}},\alpha_{v_{k}})\\
  1 & 1& 1 \\\\
\end{smallmatrix}\right|
=\alpha_{u_j}g(\alpha_{u_{k}},\alpha_{v_{k}})-\alpha_{u_{k}}g(\alpha_{u_{j}},\alpha_{v_{j}})=0,
\end{equation}
\begin{equation}
\left |\begin{smallmatrix}
   0& \alpha_{v_{j}} & \alpha_{v_{k}} \\
  0 & g(\alpha_{u_{j}},\alpha_{v_{j}}) &g(\alpha_{u_{k}},\alpha_{v_{k}})\\
  1 & 1& 1 \\\\
\end{smallmatrix}\right|
=\alpha_{v_j}g(\alpha_{u_{k}},\alpha_{v_{k}})-\alpha_{v_{k}}g(\alpha_{u_{j}},\alpha_{v_{j}})=0.
\end{equation}
Substituting (2) into (3) and (4), we have $$y^2\alpha_{u_j}g(\alpha_{u_{j}},\alpha_{v_{j}})=y\alpha_{u_j}g(\alpha_{u_{j}},\alpha_{v_{j}}),$$
$$y^2\alpha_{v_j}g(\alpha_{u_{j}},\alpha_{v_{j}})=y\alpha_{v_j}g(\alpha_{u_{j}},\alpha_{v_{j}}).$$
Then
$$y(y-1)\alpha_{u_j}g(\alpha_{u_{j}},\alpha_{v_{j}})=0,$$
$$y(y-1)\alpha_{v_j}g(\alpha_{u_{j}},\alpha_{v_{j}})=0.$$
Because $(\alpha_{u_{j}},\alpha_{v_{j}})\neq(0,0)$, without loss of generality, let $\alpha_{u_{j}}\neq0$. We also have $y\neq0,1$ and $g(\alpha_{u_{j}},\alpha_{v_{j}})\neq0$.
Thus
\begin{center}
$y(y-1)\alpha_{u_j}g(\alpha_{u_{j}},\alpha_{v_{j}})\neq0$.
\end{center}
This leads to a contradiction. It follows that the
matrix $(A_{i}\ \ A_{j}\ \ A_{k})$ has rank $3$.

\textbf{Case 4:}\ When $\{i,j,k\}\subseteq\{2,3,\cdots,q^2\}$. We have
$$(A_{i}\ \ A_{j}\ \ A_{k})=\left(
        \begin{smallmatrix}
              \alpha_{u_i} & \alpha_{u_j} & \alpha_{u_k} \\
              \alpha_{v_i} & \alpha_{v_j} & \alpha_{v_k}\\
              g(\alpha_{u_{i}},\alpha_{v_{i}})& g(\alpha_{u_{j}},\alpha_{v_{j}}) &g(\alpha_{u_{k}},\alpha_{v_{k}})\\
              1 & 1 & 1  \\
        \end{smallmatrix}
        \right).$$

If
$\left |\begin{smallmatrix}
  \alpha_{u_{i}} & \alpha_{u_{j}} & \alpha_{u_{k}} \\
\alpha_{v_{i}} & \alpha_{v_{j}} &\alpha_{v_{k}}\\
  1 & 1& 1 \\
\end{smallmatrix}\right|
\neq0$, then the rank of $(A_{i}\ \ A_{j}\ \ A_{k})$ is $3$.

If $\left |\begin{smallmatrix}
  \alpha_{u_{i}} & \alpha_{u_{j}} & \alpha_{u_{k}} \\
\alpha_{v_{i}} & \alpha_{v_{j}} &\alpha_{v_{k}}\\
  1 & 1& 1 \\
\end{smallmatrix}\right|
=0$, we shall show that
\begin{center}
$\left |\begin{smallmatrix}
   \alpha_{u_{i}} & \alpha_{u_{j}} & \alpha_{u_{k}} \\
  g(\alpha_{u_{i}},\alpha_{v_{i}}) & g(\alpha_{u_{j}},\alpha_{v_{j}}) &g(\alpha_{u_{k}},\alpha_{v_{k}})\\
  1 & 1& 1 \\\\
\end{smallmatrix}\right|
\neq0$ or
$\left |\begin{smallmatrix}
   \alpha_{v_{i}} & \alpha_{v_{j}} & \alpha_{v_{k}} \\
  g(\alpha_{u_{i}},\alpha_{v_{i}}) & g(\alpha_{u_{j}},\alpha_{v_{j}}) &g(\alpha_{u_{k}},\alpha_{v_{k}})\\
  1 & 1& 1 \\\\
\end{smallmatrix}\right|
\neq0.$
\end{center}
Clearly, the $3$ column vectors
$(\alpha_{u_{i}},\alpha_{v_{i}},1)^T,(\alpha_{u_{j}},\alpha_{v_{j}},1)^T$ and $(\alpha_{u_{k}},\alpha_{v_{k}},1)^T$ are linear correlation.
So there exist two integers $x$ and $y$ such that  $$(\alpha_{u_{k}},\alpha_{v_{k}},1)^T=x(\alpha_{u_{i}},\alpha_{v_{i}},1)^T+y(\alpha_{u_{j}},\alpha_{v_{j}},1)^T$$
then
\begin{equation}
\left\{
          \begin{array}{l}
              \alpha_{u_{k}}=x\alpha_{u_{i}}+y\alpha_{u_{j}},\\
\alpha_{v_{k}}=x\alpha_{v_{i}}+y\alpha_{v_{j}},\\
1=x+y.
          \end{array}
       \right.
\end{equation}
In a similar way to above case, we have $x,y\not\in\{0, 1\}$.

Suppose that
$\left |\begin{smallmatrix}
\alpha_{u_{i}} & \alpha_{u_{j}} & \alpha_{u_{k}} \\
g(\alpha_{u_{i}},\alpha_{v_{i}}) & g(\alpha_{u_{j}},\alpha_{v_{j}}) &g(\alpha_{u_{k}},\alpha_{v_{k}})\\
1 & 1& 1 \\\\
\end{smallmatrix}\right|
=0.$
Then the following system of equations holds.
\begin{equation}
\left\{
\begin{array}{l}
\alpha_{u_{k}}=x'\alpha_{u_{i}}+y'\alpha_{u_{j}},\\
g(\alpha_{u_{k}},\alpha_{v_{k}})=x'g(\alpha_{u_{i}},\alpha_{v_{i}}) +y'g(\alpha_{u_{j}},\alpha_{v_{j}}),\\
1=x'+y'.
\end{array}
\right.
\end{equation}
So we have $(x\alpha_{u_{i}}+y\alpha_{u_{j}})-(x'\alpha_{u_{i}}+y'\alpha_{u_{j}})=0$ and
$(x+y)-(x'+y')=0$. It follows that $(y-y')(\alpha_{u_{j}}-\alpha_{u_{i}})=0$, thus
$y'=y$ or $\alpha_{u_{i}}=\alpha_{u_{j}}$.

When $y'=y$. Clearly, $x'=x$.
Substituting (5) into (6), we have $$(x^2-x)(g(\alpha_{u_{i}},\alpha_{v_{i}})+g(\alpha_{u_{j}},\alpha_{v_{j}})-2(\alpha_{u_{i}}
\alpha_{u_{j}}+\alpha_{v_{i}}\alpha_{v_{j}})-a(\alpha_{u_{i}}\alpha_{v_{j}}+\alpha_{u_{j}}
\alpha_{v_{i}}))=0.$$
So
$$g(\alpha_{u_{i}},\alpha_{v_{i}})+g(\alpha_{u_{j}},\alpha_{v_{j}})-2(\alpha_{u_{i}}
\alpha_{u_{j}}+\alpha_{v_{i}}\alpha_{v_{j}})-a(\alpha_{u_{i}}\alpha_{v_{j}}+\alpha_{u_{j}}
\alpha_{v_{i}})=0.$$
Substituting $g(\alpha_{u},\alpha_{v})=-(\alpha_{u}^2+a\alpha_{u}\alpha_{v}+\alpha_{v}^2)$ into the above equality, we have
$$(\alpha_{u_{i}}-\alpha_{u_{j}})^2+a(\alpha_{u_{i}}-\alpha_{u_{j}})(\alpha_{v_{i}}-
\alpha_{v_{j}})+(\alpha_{v_{i}}-\alpha_{v_{j}})^2=0,$$
that is,
$$g(\alpha_{u_{i}}-\alpha_{u_{j}},\alpha_{v_{i}}-\alpha_{v_{j}})=0.$$
It follows that $\alpha_{u_{i}}-\alpha_{u_{j}}=0$ and $\alpha_{v_{i}}-\alpha_{v_{j}}=0$,
which contradict the fact $A_i, A_j$ and $A_k$ are pair distinct.
Thus $\left |\begin{smallmatrix}
\alpha_{u_{i}} & \alpha_{u_{j}} & \alpha_{u_{k}} \\
g(\alpha_{u_{i}},\alpha_{v_{i}}) & g(\alpha_{u_{j}},\alpha_{v_{j}}) &g(\alpha_{u_{k}},\alpha_{v_{k}})\\
1 & 1& 1 \\
\end{smallmatrix}\right|
\neq0.$
We have the rank of the matrix $(A_{i}\ \ A_{j}\ \ A_{k})$ is $3$.

When $\alpha_{u_{i}}=\alpha_{u_{j}}$. Clearly, $\alpha_{u_{i}}=\alpha_{u_{j}}=\alpha_{u_{k}}$. By a simple computation, we have
\begin{center}
$\left |\begin{smallmatrix}
   \alpha_{v_{i}} & \alpha_{v_{j}} & \alpha_{v_{k}} \\
  g(\alpha_{u_{i}},\alpha_{v_{i}}) & g(\alpha_{u_{j}},\alpha_{v_{j}}) &g(\alpha_{u_{k}},\alpha_{v_{k}})\\
  1 & 1& 1 \\
\end{smallmatrix}\right|
=(\alpha_{v_{j}}-\alpha_{v_{i}})(\alpha_{v_{k}}-\alpha_{v_{i}})(\alpha_{v_{k}}-\alpha_{v_{j}})\neq0.$
\end{center}
It follows that the rank of the matrix $(A_{i}\ \ A_{j}\ \ A_{k})$ is $3$.

By Construction \ref{Suen}, there exists an LOA$(q^4,k,q,3)$ for any prime power $q\geq 3$ and $4\leq k\leq q^2+1$. The proof is completed.
\end{proof}

\subsection{Difference Matrix Construction }

In this subsection, new construction methods for large sets of orthogonal arrays
are given by difference matrix.

Let $G$ be an abelian group of order $v$. A \emph{difference matrix}, or a $(v, k, 1)$-DM is a $v \times k$ array $D = (d_{i,j})$$(1 \leq i \leq v, 1 \leq j \leq k)$
with entries from $G$, such that for any two distinct columns $l$ and $h$ of $D(1 \leq l < h \leq k)$, the difference list $\bigtriangleup_{l,h} =
\{d_{1,h} - d_{1,l}, d_{2,h} - d_{2,l}, \ldots , d_{v,h} - d_{v,l}\}$ contains every element of $G$ exactly once.

\begin{con}\label{chai1}
If there exists a $(v,4,1)$-DM, then there exists an LOA$(v^{3},k,v,2)$, where $3\leq k\leq13$.
\end{con}
\begin{proof}
Let $D=(d_{i,j})_{v\times 4}$ be a $(v,4,1)$-DM over an abelian group $G$ of order $v$. For each row  $(d_{i,1},d_{i,2},d_{i,3},d_{i,4})$ of $D$, we construct the following rows:

\begin{center}
$C(i,u,e)=(d_{i,1}+u,d_{i,2}+u,d_{i,3}+u,d_{i,4}+u,d_{i,1}+u+e,
d_{i,2}+u+e,d_{i,3}+u+e,d_{i,4}+u+e,d_{i,1}-d_{i,2},d_{i,1}-d_{i,2}+e,d_{i,1}-d_{i,3}+e,
d_{i,1}-d_{i,4}+e,e)$,
\end{center}
where $u, e\in G$, $i\in\{1,2,\cdots,v\}$,
then the matrix is denoted by $M$. By Lemma $2.4$ in \cite{chen19}, $M$ is an OA$(v^{3},13,v,2)$ and the $1$-st, $2$-nd and $7$-th columns of $M$ constitute an OA$(v^{3},3,v,3)$. It follows that there exists an LOA$(v^{3},k,v,2)$  for $3\leq k\leq 13$ by Lemma \ref{3-202}.
\end{proof}

\begin{con}\label{chai2}
If there exists a $(v,4,1)$-DM, then there exists an LOA$(v^{4},k,v,2)$, where $4\leq k\leq29$.
\end{con}
\begin{proof}
Let $D=(d_{i,j})_{v\times 4}$ be a $(v,4,1)$-DM over an abelian group $G$ of order $v$. For each row  $(d_{i,1},d_{i,2},d_{i,3},d_{i,4})$ of $D$, we construct the following rows:

\begin{center}
$C(i,u,e,w)=(d_{i,1}+u,d_{i,2}+u,d_{i,3}+u+e,d_{i,4}+u+e+w,w,d_{i,3}+u+w,d_{i,4}+u+w,
d_{i,1}+u+e+w,d_{i,3}+u,d_{i,4}+u,d_{i,1}+u+e,d_{i,2}+u+e,
d_{i,4}+u+e,d_{i,1}+u+w,d_{i,2}+e+w,d_{i,3}+e+w,d_{i,4}+e+w,
d_{i,1}+u+w,d_{i,2}+u+w,d_{i,2}+u+e+w,d_{i,3}+u+e+w,
d_{i,1}-d_{i,2},d_{i,1}-d_{i,2}+e,d_{i,1}-d_{i,3}+e,d_{i,1}-d_{i,4}+e,
d_{i,1}-d_{i,2}+w,d_{i,1}-d_{i,3}+w,d_{i,1}-d_{i,4}+w,e)$,
\end{center}
where $u, e,w\in G$, $i\in\{1,2,\cdots,v\}$,
then the matrix is denoted by $M$. By Lemma $2.5$ in \cite{chen19}, $M$ is an OA$(v^{4},29,v,2)$. It is easy to verify that the first
four columns of $M$ constitute an OA$(v^{4},4,v,4)$. It follows that there exists an  LOA$(v^{4},k,v,2)$ for $4\leq k\leq 29$ by Lemma\ref{3-202}.
\end{proof}

We have the following result on $(v, 4, 1)$-DM.

\begin{lem}[\cite{g41}]\label{v41DM}
If $v\geq4$ and $v\not\equiv2 \pmod 4$, then there exists a $(v,4,1)$-DM.
\end{lem}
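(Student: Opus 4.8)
My plan is the standard three-layer attack on difference-matrix existence: a finite-field construction for prime powers, a direct-product (MacNeish) construction to multiply parameters, and a dedicated construction for the one residual family. It helps to use the following reformulation. Normalising the first column of a $(v,4,1)$-DM over an abelian group $G$ to the zero vector preserves all difference lists, since these depend only on within-row differences of columns; after that, re-indexing the rows by $G$ so that the second column becomes the identity map $\mathrm{id}$, one sees that a $(v,4,1)$-DM over $G$ is exactly a pair of permutations $\phi,\psi$ of $G$ for which $\phi$, $\psi$, $\phi-\mathrm{id}$, $\psi-\mathrm{id}$ and $\psi-\phi$ are all permutations of $G$.

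First I would settle the prime powers $q\ge 4$. Over $(\mathbb{F}_q,+)$ take the $q\times q$ array whose rows and columns are indexed by $\mathbb{F}_q$ with $(x,y)$-entry $xy$; since $y\ne y'$ forces $x\mapsto x(y-y')$ to be a bijection of $\mathbb{F}_q$, this is a $(q,q,1)$-DM, and deleting all but four of its columns gives a $(q,4,1)$-DM. (In the permutation language this is just: choose $a,b\in\mathbb{F}_q\setminus\{0,1\}$ with $a\ne b$, which is possible precisely because $q\ge 4$, and set $\phi(x)=ax$, $\psi(x)=bx$.) Next, the MacNeish step: given a $(v_1,4,1)$-DM over $G_1$ and a $(v_2,4,1)$-DM over $G_2$, the $(v_1v_2)\times 4$ array over $G_1\times G_2$ obtained by pairing their rows entrywise is a $(v_1v_2,4,1)$-DM, because the difference list of any two of its columns runs exactly once over $G_1\times G_2$ --- its $G_1$-component runs once over $G_1$ (by the first DM) and its $G_2$-component runs once over $G_2$ (by the second). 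Chaining these two steps already produces a $(v,4,1)$-DM whenever every prime-power component in the factorisation of $v$ is at least $4$; equivalently, for every $v\ge 4$ with $v\not\equiv 2\pmod 4$ except those of the form $v=3m$ with $\gcd(m,3)=1$.

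That residual family, $v=3m$ with $3\nmid m$ and $m\ge 4$ (here $v\not\equiv 2\pmod 4$ forces $m\not\equiv 2\pmod 4$, so in particular $m\ne 6$), is the main obstacle: the routine machinery genuinely fails here because there is no $(3,4,1)$-DM at all. Indeed, after normalisation the three non-first columns of a would-be $(3,4,1)$-DM must be permutations of $\mathbb{Z}_3$ fixing $0$, of which there are only two, so two columns coincide; consequently one cannot tensor a ``$3$-block'' onto an ``$m$-block'', and the obvious fibred constructions inherit the same obstruction. For this family I would instead use a dedicated recursive construction: since two mutually orthogonal Latin squares of order $m$ (equivalently a transversal design $\mathrm{TD}(4,m)$) exist for every $m\notin\{2,6\}$, a Wilson-type weighting/filling argument propagates a small number of ad hoc base cases --- for instance explicit $(12,4,1)$- and $(15,4,1)$-DMs, which can be written down and checked directly --- to all admissible $m$; alternatively one exhibits the permutations $\phi,\psi$ directly on $\mathbb{Z}_{3m}$ (or on $\mathbb{Z}_3\times H$ with $|H|=m$) using maps that deliberately do not preserve the $\mathbb{Z}_3$-coordinate. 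Once all such $v=3m$ are covered, the MacNeish step disposes of the remaining composite values, completing the list of admissible $v$. The finite-field and product steps are essentially bookkeeping; the delicate part --- and the reason this fact is quoted from a separate reference --- is the uniform treatment of the $v$ that are divisible by $3$ but not by $9$.
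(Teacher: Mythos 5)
There is a genuine gap, and it sits exactly where the real content of this lemma lies. Note first that the paper does not prove this statement at all: it is quoted verbatim from Ge's paper \cite{g41}, whose whole purpose is to settle the existence of $(g,4,1)$-DMs. Your first two layers are fine: the multiplication-table $(q,q,1)$-DM over $\mathbb{F}_q$ (equivalently $\phi(x)=ax$, $\psi(x)=bx$ with $a,b\notin\{0,1\}$, $a\neq b$, possible since $q\ge 4$) and the Kronecker/MacNeish product are both correct and standard, and you correctly identify that they leave open precisely the values $v=3m$ with $3\nmid m$ (Sylow $3$-subgroup cyclic of order $3$), because no $(3,4,1)$-DM exists. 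But for that residual family you do not give a proof; you assert that ``a Wilson-type weighting/filling argument'' starting from a TD$(4,m)$ and a couple of base DMs such as $(12,4,1)$ and $(15,4,1)$ ``propagates'' to all admissible $m$, or alternatively that one can ``exhibit the permutations directly.'' Neither alternative is carried out, and the first one is not a routine step that can be waved through: a difference matrix is an array over a \emph{group} of order $3m$, and PBD/TD weighting-and-filling constructions produce transversal designs or quasi-group-free objects on a point set with no compatible group action, so a TD$(4,m)$ together with small DMs does not automatically yield a $(3m,4,1)$-DM. This is precisely why the case $3\,\|\,g$ required a dedicated paper: Ge's treatment rests on direct constructions over groups such as $\mathbb{Z}_3\times H$ (equivalently, pairs of orthogonal orthomorphisms of such groups, whose existence is delicate when the Sylow $3$-subgroup is $\mathbb{Z}_3$) combined with product-type recursions, not on a generic Wilson-style filling.

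In short: your reduction of the easy cases is correct and matches the standard route, but the lemma's difficulty is concentrated in the family $v\equiv 3,6\pmod 9$ (with $v\not\equiv 2\pmod 4$), and for that family your proposal contains a plan rather than an argument, with the key recursive step unsubstantiated and, as stated, not workable without additional machinery. As the paper does here, the honest options are either to cite \cite{g41} for this case or to reproduce one of its explicit constructions; the blind proposal does neither.
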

The following theorem is immediately obtained by combining Construction \ref{chai1}, Construction \ref{chai2} and Lemma \ref{v41DM}.

\begin{thm}\label{chai11}
If $v\geq4$ and $v\not\equiv2 \pmod 4$, there exist an LOA$(v^{3},k,v,2)$ for $3\leq k\leq13$ and an LOA$(v^{4},k,v,2)$ for $4\leq k\leq29$.
\end{thm}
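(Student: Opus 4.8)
The plan is to simply assemble the pieces that the excerpt has already laid out, since Theorem \ref{chai11} is nothing more than the conjunction of Construction \ref{chai1}, Construction \ref{chai2}, and Lemma \ref{v41DM}. First I would invoke Lemma \ref{v41DM}: under the hypothesis $v\geq4$ and $v\not\equiv 2\pmod 4$, there exists a $(v,4,1)$-difference matrix over some abelian group $G$ of order $v$. This is the single external input; everything else is already available.

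Next I would feed this difference matrix into Construction \ref{chai1}. That construction takes a $(v,4,1)$-DM and produces an LOA$(v^3,k,v,2)$ for every $k$ with $3\leq k\leq 13$; the internal mechanism — building the $v^3\times 13$ array $M$ from the rows $C(i,u,e)$, checking via Lemma 2.4 of \cite{chen19} that $M$ is an OA$(v^3,13,v,2)$ whose $1$st, $2$nd and $7$th columns form an OA$(v^3,3,v,3)$, and then applying Lemma \ref{3-202} to pass to the large set on any $3\leq k\leq 13$ of the columns — has already been carried out in the proof of that construction, so I would just cite it. In parallel, I would feed the same difference matrix into Construction \ref{chai2}, which by the identical pattern (the array built from the rows $C(i,u,e,w)$, Lemma 2.5 of \cite{chen19}, the first four columns forming an OA$(v^4,4,v,4)$, and Lemma \ref{3-202}) yields an LOA$(v^4,k,v,2)$ for every $k$ with $4\leq k\leq 29$.

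Combining these two applications gives exactly the two assertions of the theorem, so the proof is a three-line deduction. Honestly there is no real obstacle here: the substantive work — verifying that the explicitly written rows do form orthogonal arrays of the claimed strength, and that the designated sub-columns have the full-factorial property needed to invoke Lemma \ref{3-202} — is entirely contained in Constructions \ref{chai1} and \ref{chai2} and in the cited Lemmas 2.4 and 2.5 of \cite{chen19}; the only non-trivial ingredient beyond that is the existence result for difference matrices, Lemma \ref{v41DM}, which is quoted from \cite{g41}. So my write-up would be short: state that by Lemma \ref{v41DM} a $(v,4,1)$-DM exists under the stated arithmetic condition on $v$, then apply Construction \ref{chai1} and Construction \ref{chai2} to it, and conclude. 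If anything needs a sentence of care, it is just making explicit that the same DM is being used as input to both constructions, so that both LOA families are obtained simultaneously for the same $v$.

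\begin{proof}
Suppose $v\geq4$ and $v\not\equiv2\pmod4$. By Lemma \ref{v41DM}, there exists a $(v,4,1)$-DM over an abelian group $G$ of order $v$. Applying Construction \ref{chai1} to this difference matrix, we obtain an LOA$(v^{3},k,v,2)$ for every $k$ with $3\leq k\leq13$. Applying Construction \ref{chai2} to the same difference matrix, we obtain an LOA$(v^{4},k,v,2)$ for every $k$ with $4\leq k\leq29$. This completes the proof.
\end{proof}
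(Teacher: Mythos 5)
Your proposal is correct and is exactly the paper's argument: the paper obtains Theorem \ref{chai11} "immediately" by combining Lemma \ref{v41DM} with Constructions \ref{chai1} and \ref{chai2}, which is precisely your three-step deduction.
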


\section{New results on orthogonal arrays}

In this section, we shall use the following constructions to obtain some new results on orthogonal arrays of strength $t\geq 4$. Let \emph{lcm}$\{s_1,s_2,\ldots,s_k\}$ and
\emph{gcd}$\{s_1,s_2,\ldots,s_k\}$
be the least common multiple and the greatest common
factor of the positive integers $s_1,s_2,\ldots,s_k$ respectively.

\begin{con}[\cite{chen23}]\label{3-1}
If an LOA$(N_1,m,p,t_1)$ and an LOA$(N_2,n,q,t_2)$ exist, then there exists an OA$(hN_1N_2, p^{m}q^{n}, t)$, where $h=lcm\{\frac{p^m}{N_1},\frac{q^n}{N_2}\}$ and $t=t_1+t_2+1$.
\end{con}

\begin{con}[\cite{chen23}]\label{3-2}
If an LOA$(N_1, {p_1}^{k_1} {p_2}^{k_2}\cdots {p_m}^{k_m},t_1)$ and an LOA$(N_2,{q_1}^{l_1}{q_2}^{l_2}\cdots {q_n}^{l_n},\\t_2)$ exist, then there exists an OA$(hN_1N_2, {p_1}^{k_1}\cdots {p_m}^{k_m}{q_1}^{l_1}\cdots {q_n}^{l_n},t)$,
where $h=lcm\{\frac{\prod_{i=1}^{m}{p_i}^{k_i}}{N_1},\\\frac{\prod_{j=1}^{n}{q_j}^{l_j}}{N_2}\}$ and $t=t_1+t_2+1$.
\end{con}

To obtain new OAs by Constructions \ref{3-1}-\ref{3-2},
we need more results on LOAs in the following.

\begin{lem}[\cite{chen23}]\label{M-1}
Let $s_1,s_2,\ldots,s_k$ be positive integers,
there exists an LOA$(N,s_1 s_2 \cdots s_k,1)$,
where $N=lcm\{s_1, s_2,\ldots, s_k\}$.
\end{lem}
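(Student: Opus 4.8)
The plan is to construct an explicit $N\times k$ array over the product alphabet $\mathbb{Z}_{s_1}\times\mathbb{Z}_{s_2}\times\cdots\times\mathbb{Z}_{s_k}$ with $N=\mathrm{lcm}\{s_1,\ldots,s_k\}$ and show that it is a simple OA of strength $1$, and moreover that the whole family of its cyclic shifts tiles the full factorial design. First I would set $N=\mathrm{lcm}\{s_1,\ldots,s_k\}$ and define, for each $r\in\{0,1,\ldots,N-1\}$, the row $R_r=(r\bmod s_1,\ r\bmod s_2,\ \ldots,\ r\bmod s_k)$; the base array $A_0$ is the stack of rows $R_0,R_1,\ldots,R_{N-1}$. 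Because $s_i\mid N$, in column $i$ each residue of $\mathbb{Z}_{s_i}$ appears exactly $N/s_i$ times among $R_0,\ldots,R_{N-1}$, so $A_0$ is an $\mathrm{OA}(N,s_1 s_2\cdots s_k,1)$; and it is simple, since distinct $r,r'$ in $\{0,\ldots,N-1\}$ cannot agree in all coordinates (agreement in all coordinates would force $r\equiv r'\pmod{s_i}$ for every $i$, hence $r\equiv r'\pmod N$, contradicting $0\le r,r'<N$ with $r\ne r'$).

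Next I would produce the large set. For a vector $\mathbf{c}=(c_1,\ldots,c_k)\in\mathbb{Z}_{s_1}\times\cdots\times\mathbb{Z}_{s_k}$, let $A_{\mathbf{c}}$ be the array whose rows are $R_r+\mathbf{c}$ (coordinatewise reduction mod $s_i$ in column $i$), $r=0,\ldots,N-1$. Each $A_{\mathbf{c}}$ is again a simple $\mathrm{OA}(N,s_1\cdots s_k,1)$ by the same argument, since translating a column by a fixed constant permutes the symbols in that column. The key claim is that, as $\mathbf{c}$ ranges over a suitably chosen transversal $\mathcal{T}$ of size $\prod_i s_i / N$, the arrays $\{A_{\mathbf{c}}:\mathbf{c}\in\mathcal{T}\}$ partition the full factorial design, i.e.\ every $k$-tuple over the product alphabet occurs in exactly one $A_{\mathbf{c}}$ as a row. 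Concretely, a tuple $\mathbf{x}=(x_1,\ldots,x_k)$ lies in $A_{\mathbf{c}}$ iff there is an $r\in\{0,\ldots,N-1\}$ with $x_i\equiv r+c_i\pmod{s_i}$ for all $i$, i.e.\ $c_i\equiv x_i-r\pmod{s_i}$. Thus the set of pairs $(\mathbf{c},r)$ that "cover" $\mathbf{x}$ is $\{(\,(x_1-r\bmod s_1,\ldots,x_k-r\bmod s_k)\,,\,r):0\le r<N\}$, a set of size $N$, and the reduction map $\mathbb{Z}\to\mathbb{Z}_{s_1}\times\cdots\times\mathbb{Z}_{s_k}$ on $\{0,\ldots,N-1\}$ is injective. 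So I would choose the transversal $\mathcal{T}$ to be any system of coset representatives for the subgroup generated by $(1,1,\ldots,1)$ inside $\mathbb{Z}_{s_1}\times\cdots\times\mathbb{Z}_{s_k}$ (this subgroup has order exactly $N=\mathrm{lcm}\{s_i\}$, which is precisely why $|\mathcal{T}|=\prod_i s_i/N=M$), and observe that the $N$ covering pairs of any fixed $\mathbf{x}$ hit pairwise-distinct cosets, hence exactly one representative in $\mathcal{T}$.

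The main obstacle, and the part deserving the most care, is the counting/partition argument: one must verify that the orbit of $(1,1,\ldots,1)$ has order exactly $\mathrm{lcm}\{s_1,\ldots,s_k\}$ (immediate from the Chinese-remainder-type computation $m\cdot(1,\ldots,1)=0 \iff s_i\mid m\ \forall i \iff \mathrm{lcm}\{s_i\}\mid m$), and that choosing one $A_{\mathbf{c}}$ per coset gives each $\mathbf{x}$ covered exactly once — equivalently, that the map $r\mapsto \mathbf{x}-r(1,\ldots,1)$ followed by "which coset" is a bijection from $\{0,\ldots,N-1\}$ onto $\mathbb{Z}_{s_1}\times\cdots\times\mathbb{Z}_{s_k}/\langle(1,\ldots,1)\rangle$. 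Once this is in place, the definition of LOA is satisfied with $M=\prod_{i=1}^k s_i/N$ as required, and the proof is complete. (Alternatively, one can phrase the whole construction group-theoretically: $\mathbb{Z}_{s_1}\times\cdots\times\mathbb{Z}_{s_k}$ is partitioned into the cosets of the cyclic subgroup $H=\langle(1,\ldots,1)\rangle$; each coset, listed in the natural order, is a simple $\mathrm{OA}(N,s_1\cdots s_k,1)$, and there are $\prod_i s_i/|H|=\prod_i s_i/N$ of them — this is the cleanest way to present it.)
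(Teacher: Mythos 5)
Your construction is correct, and it is worth noting that the paper itself offers no proof to compare against: Lemma \ref{M-1} is simply quoted from \cite{chen23}, so your argument serves as a self-contained justification of a cited result. The heart of your proof --- partitioning $\mathbb{Z}_{s_1}\times\cdots\times\mathbb{Z}_{s_k}$ into the cosets of the diagonal cyclic subgroup $H=\langle(1,1,\ldots,1)\rangle$, whose order is $\mathrm{lcm}\{s_1,\ldots,s_k\}=N$, and observing that each coset, written out row by row, is a simple strength-$1$ OA with $N/s_i$ occurrences of each symbol in column $i$ --- is exactly right, and your closing parenthetical is the cleanest formulation: the rows of $A_{\mathbf{c}}$ are precisely the elements of the coset $\mathbf{c}+H$, each once, so a transversal $\mathcal{T}$ of the $\prod_i s_i/N$ cosets gives the required large set. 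One sentence in the middle is misstated, though: the $N$ values $\mathbf{x}-r(1,\ldots,1)$, $r=0,\ldots,N-1$, that cover a fixed tuple $\mathbf{x}$ do \emph{not} ``hit pairwise-distinct cosets''; they are $N$ pairwise-distinct elements all lying in the \emph{same} coset $\mathbf{x}+H$, and since that coset has exactly $N$ elements they exhaust it, which is why exactly one of them is the chosen representative in $\mathcal{T}$. With that phrase corrected (or with the covering argument dropped entirely in favour of the coset-partition formulation, which makes it redundant), the proof is complete and fully rigorous.
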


\begin{lem}[\cite{chen23}]\label{02}
There exists an LOA$(q^{n},k, q, 2)$ for any prime power $q$,
where $2\leq n\leq k\leq \frac{q^{n}-1}{q-1}$.
\end{lem}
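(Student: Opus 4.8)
The plan is to derive this directly from Construction~\ref{Suen}, applied with strength $t=2$, with $m=n$, and with $l=\frac{q^{n}-1}{q-1}$. By that construction it suffices to exhibit an $n\times l$ matrix $M$ with entries in $\mathbb{F}_q$ such that (i) any two columns of $M$ are linearly independent and (ii) some $n$ columns of $M$ are linearly independent; Construction~\ref{Suen} then yields an LOA$(q^{n},k,q,2)$ for every $k$ with $2\le n\le k\le l$, which is exactly the asserted range.

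For the matrix itself I would take one nonzero representative from each one-dimensional subspace (line through the origin) of $\mathbb{F}_q^{n}$ and use these vectors as the columns of $M$. Each line contains exactly $q-1$ nonzero vectors and there are $q^{n}-1$ nonzero vectors in all, so the number of lines, hence the number of columns, is exactly $\frac{q^{n}-1}{q-1}=l$, as required; this is just the count of points of $\mathrm{PG}(n-1,q)$. I would moreover choose the representatives so that the standard basis vectors $e_1,\ldots,e_n$ occur among the columns, which is legitimate since the $\langle e_i\rangle$ are $n$ distinct lines.

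Property (i) holds because two distinct columns span two distinct lines through the origin, so neither is a scalar multiple of the other; for $t=2$ this is precisely the requirement that any $t$ columns be linearly independent. Property (ii) is immediate once $e_1,\ldots,e_n$ are among the columns. Feeding $M$ into Construction~\ref{Suen} completes the argument.

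I do not anticipate a real obstacle: the whole proof is a routine translation between "columns of $M$ lie in pairwise distinct lines of $\mathbb{F}_q^{n}$" and the two hypotheses of Construction~\ref{Suen}, together with the standard projective count. The only points that deserve a line of care are verifying that the number of one-dimensional subspaces is exactly $\frac{q^{n}-1}{q-1}$ and noting that $n\ge2$ is what makes the strength-$2$ condition of Construction~\ref{Suen} non-vacuous (and that $n\le\frac{q^{n}-1}{q-1}$, so the constraint $m\le l$ in that construction is satisfied).
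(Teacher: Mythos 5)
Your proof is correct: taking one nonzero representative of each of the $\frac{q^{n}-1}{q-1}$ one-dimensional subspaces of $\mathbb{F}_q^{n}$ as the columns of $M$ (arranged so that $e_1,\ldots,e_n$ appear) satisfies both hypotheses of Construction \ref{Suen} with $t=2$ and $m=n$, and the construction then gives an LOA$(q^{n},k,q,2)$ for exactly the stated range $2\le n\le k\le \frac{q^{n}-1}{q-1}$. The paper states this lemma without proof, citing \cite{chen23}, where it is obtained by the same finite-field (projective-point) construction, so your argument follows essentially the intended route.
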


\begin{lem}[\cite{Etzion}]\label{Etzion}
If there exists an OA$(v^t, k, v, t)$, then there exists an LOA$(v^t, k, v, t)$.
\end{lem}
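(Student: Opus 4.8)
The plan is to quote Lemma~\ref{Etzion} together with the two classical existence results for orthogonal arrays of strength one and two, so that Lemma~\ref{Etzion} itself has nothing to prove. Concretely, I would first recall that an OA$(v^t,k,v,t)$ is, by definition, a trivial full factorial design, i.e. its $v^t$ runs are a permutation of all of $\mathbb{Z}_v^t$ exactly once — wait, that is only automatic when $k=t$; for $k>t$ the statement is genuinely a theorem. So the real content is: given a \emph{simple} OA$(v^t,k,v,t)$ one wants to partition the full factorial OA$(v^k,k,v,k)$ (the trivial OA on all $k$-tuples) into translates, or otherwise into $v^{k-t}$ simple strength-$t$ arrays. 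The standard trick here is a group-theoretic one: embed $\mathbb{Z}_v^k$ into the rows in the obvious way and act by the translation subgroup.

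First I would set $A$ to be the given OA$(v^t,k,v,t)$ over $\mathbb{Z}_v$, viewed as a set of $v^t$ rows; since the array has strength $t$ with index $\lambda=1$, in any $t$ columns each $t$-tuple appears exactly once, which forces $A$ to be simple and of full rank in a suitable sense. Next I would consider the coset decomposition of $\mathbb{Z}_v^k$ by a cleverly chosen subgroup $H$ of order $v^t$: if one can choose $H$ so that the canonical projection $H\to\mathbb{Z}_v^t$ onto any $t$ coordinates is a bijection, then each coset $A+g$ (for $g$ ranging over a transversal of $H$ in $\mathbb{Z}_v^k$) is again a simple OA$(v^t,k,v,t)$, and the $v^{k-t}$ translates $\{A+g\}$ partition all $k$-tuples. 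However — and here is the subtlety — the given $A$ need not itself be a subgroup, so translating $A$ does not obviously tile $\mathbb{Z}_v^k$. The correct approach, which I expect the authors to use, is instead: take the trivial OA$(v^k,k,v,k)$ consisting of \emph{all} $k$-tuples, and observe that it is the disjoint union of $v^{k-t}$ copies of the full factorial pattern along the last $k-t$ coordinates; more robustly, one uses the fact that an OA$(v^t,k,v,t)$ of index $1$ exists if and only if an $[k,t]$-type combinatorial object (e.g. an MDS-like code or a set of orthogonal Latin hypercubes) exists, and the large set comes from the cosets of the associated linear/combinatorial structure. In the purely combinatorial (non-linear) case one argues directly: define an equivalence on $\mathbb{Z}_v^k$ by $x\sim y$ iff $x-y$ agrees, on each of a fixed set of $t$ ``coordinate-determining'' columns, with a row-difference pattern of $A$; checking this is an equivalence with classes that are simple strength-$t$ arrays is the heart of the matter.

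The key steps, in order, would be: (1) record that the defining property of OA$(v^t,k,v,t)$ (index $\lambda=1$) means every $t$-subset of columns restricts $A$ to a bijection onto $\mathbb{Z}_v^t$; (2) fix such a $t$-subset $I$ and use the bijection $A\to\mathbb{Z}_v^t$ to index the rows of $A$ by $\mathbb{Z}_v^t$; (3) for each $z\in\mathbb{Z}_v^{k-t}$ (indexed by the complementary columns), build a new array $A_z$ by shifting the $I^c$-entries of $A$ by $z$ while keeping the $I$-entries fixed — equivalently add the constant vector that is $0$ on $I$ and $z$ on $I^c$; (4) verify each $A_z$ is still an OA$(v^t,k,v,t)$: the $I$-columns are untouched so they still give a bijection, and for any other $t$-subset $J$ one checks the added constant shift preserves the balance because strength-$t$ balance is translation-invariant coordinatewise; (5) verify the $A_z$ are pairwise disjoint and cover $\mathbb{Z}_v^k$ by a counting argument ($v^{k-t}$ arrays, each with $v^t$ rows, total $v^k$, and two rows agreeing on $I$ and lying in $A_{z}, A_{z'}$ forces $z=z'$); (6) conclude the family $\{A_z : z\in\mathbb{Z}_v^{k-t}\}$ is the desired LOA$(v^t,k,v,t)$, and simplicity of each $A_z$ follows from the $I$-bijection.

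The main obstacle is step~(4): it is \emph{not} true in general that an arbitrary coordinatewise shift of a strength-$t$ array of index $1$ is again strength $t$ — one needs the shift to be constant (the same vector added to every row), which is exactly the construction above, and then translation-invariance does the job; the delicate point is that the shifts I am adding, while constant across rows, are different for different $z$, and I must make sure the resulting arrays genuinely partition rather than overlap, which is why step~(5)'s ``agreeing on $I$'' argument is essential and relies crucially on $\lambda=1$. If $\lambda>1$ this whole scheme collapses, which is consistent with the hypothesis of Lemma~\ref{Etzion} being about OA$(v^t,k,v,t)$ with the run-count $v^t$ pinned down (forcing index $1$). I would therefore present the proof as: reduce to index $1$, fix $t$ determining columns, translate along the complement, and finish by counting.
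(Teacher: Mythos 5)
Your final construction is correct, and in fact the paper itself gives no proof of this lemma at all: it is quoted verbatim from Etzion and Zhou \cite{Etzion}, so there is no in-paper argument to compare against. Your steps (1)--(6) constitute a valid self-contained proof: since $N=v^t$ forces index $\lambda=1$, the projection of $A$ onto a fixed $t$-set $I$ of columns is a bijection onto $\mathbb{Z}_v^t$ (which also gives simplicity); adding to every row the constant vector that is $0$ on $I$ and $z$ on $I^c$ preserves strength $t$ because a columnwise constant shift permutes the $t$-tuples in any $t$ columns; and each $k$-tuple $x$ lies in exactly one $A_z$, namely the one with $z=x_{I^c}-a_{I^c}$ where $a$ is the unique row of $A$ with $a_I=x_I$. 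Two remarks. First, your step (5) is worded imprecisely: ``two rows agreeing on $I$ and lying in $A_z,A_{z'}$ forces $z=z'$'' is false for two \emph{distinct} rows; the statement you actually need (and which your argument supports) is that a \emph{single} $k$-tuple cannot lie in two different $A_z$'s, since it would arise from the same row $a$ of $A$ and its $I^c$-part pins down $z$ uniquely. Second, the first half of your write-up, with its detour through subgroups, cosets, MDS codes and a tentative equivalence relation, is unnecessary and somewhat confused (the tiling here comes from the $I$-projection bijection, not from any group structure on $A$); the clean translation argument you settle on is all that is required, and it is more elementary than invoking the machinery of \cite{Etzion}.
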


\begin{lem}\label{03}
(1) If $q=2^m$, $m\geq 1$, then there exists an LOA$(q^3,k,q,3)$, where $3\leq k\leq q+2$.

(2) If $q$ is a prime power and $q+1\geq t\geq 0$,
then there exists an LOA$(q^t, k, q,t)$, where $t\leq k\leq q+1$.
\end{lem}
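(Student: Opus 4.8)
The plan is to produce each part by exhibiting an orthogonal array of the required strength whose columns include a set that behaves like a full factorial, and then invoke Lemma \ref{3-202} (or directly Lemma \ref{3-201}) to pass from the OA to the LOA. For part (1), let $q=2^m$ and work over the finite field $\mathbb{F}_q$. I would use the standard MDS-type construction: take the $3\times(q+2)$ matrix $M$ over $\mathbb{F}_q$ whose columns are the $q$ vectors $(1,\alpha,\alpha^2)^T$ for $\alpha\in\mathbb{F}_q$ together with the two extra columns $(0,0,1)^T$ and $(0,1,0)^T$ (the two "points at infinity" of the conic/normal rational curve, which in characteristic $2$ can be taken of this form). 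The key point is that any $3$ of these columns are linearly independent — for three finite columns this is a Vandermonde determinant, and the mixed cases with one or two infinite columns are a short direct check. Then the linear code / coset construction gives an OA$(q^3,q+2,q,3)$, and the three columns $(1,\alpha,\alpha^2)^T$ for $\alpha=0$, an auxiliary pair, or more simply a block of three columns forming an invertible $3\times3$ submatrix, yield a full factorial OA$(q^3,3,q,3)$ inside it; by Lemma \ref{3-202} we get LOA$(q^3,k,q,3)$ for $3\le k\le q+2$.

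For part (2), suppose $q$ is a prime power and $0\le t\le q+1$. I would build the $t\times(q+1)$ matrix $M$ over $\mathbb{F}_q$ whose columns are the points of the normal rational curve of degree $t-1$: the $q$ columns $(1,\alpha,\alpha^2,\ldots,\alpha^{t-1})^T$ for $\alpha\in\mathbb{F}_q$ plus the column $(0,0,\ldots,0,1)^T$. By the Vandermonde argument together with the handling of the infinite column, any $t$ of these $q+1$ columns are linearly independent, so Construction \ref{Suen} (with $m=t$, $l=q+1$, and noting that the first $t$ columns already form an invertible matrix) produces an LOA$(q^t,k,q,t)$ for $t\le k\le q+1$. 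The small-$t$ cases ($t=0$ and $t=1$) are degenerate: $t=0$ is the trivial one-row array, and $t=1$ is handled by Lemma \ref{M-1} (or directly), so those can be dispatched in a sentence.

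The main obstacle — really the only place where care is needed — is the characteristic-$2$ subtlety in part (1): over fields of characteristic $2$ a conic has a nucleus, so the "two infinite columns" must be chosen correctly for the $3$-wise independence to hold, and the mixed determinant checks (one or two columns from $\{(0,0,1)^T,(0,1,0)^T\}$, the rest from the curve) must be written out. I would organize part (1) as a brief case analysis on how many of the three chosen columns are infinite, exactly mirroring the four-case structure used in the proof of Theorem \ref{LOAq4-3}, and for part (2) the Vandermonde computation plus one case for the infinite column is all that is required before citing Construction \ref{Suen}.
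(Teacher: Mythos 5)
Your plan is correct, but it is a genuinely different route from the one the paper takes. The paper's proof of Lemma \ref{03} is essentially two lines: for (1) it takes the known OA$(q^3,q+2,q,3)$ of index unity from Bush \cite{Bush}, deletes $q+2-k$ columns to get an OA$(q^3,k,q,3)$, and then applies Lemma \ref{Etzion} (the Etzion--Zhou result that any OA$(v^t,k,v,t)$, i.e.\ any index-unity OA, can be completed to an LOA$(v^t,k,v,t)$); part (2) is declared to be the same argument with Bush's OA$(q^t,q+1,q,t)$. You instead rebuild Bush's arrays explicitly: the hyperoval generator matrix $\{(1,\alpha,\alpha^2)^T\}\cup\{(0,0,1)^T,(0,1,0)^T\}$ for $q=2^m$ and the rational-normal-curve matrix $\{(1,\alpha,\ldots,\alpha^{t-1})^T\}\cup\{(0,\ldots,0,1)^T\}$ in general, verify $t$-wise linear independence (Vandermonde plus the infinite-column cases, with the nucleus point handled correctly in characteristic $2$), and then conclude via Construction \ref{Suen} (or equivalently Lemma \ref{3-202}, since projecting the linear OA onto any $t$ independent columns is a bijection), with the degenerate cases $t=0,1$ dispatched by hand or by Lemma \ref{M-1}. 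Both arguments are sound and the parameter ranges match; the trade-off is that your version is self-contained and constructive (and mirrors the style of Theorem \ref{LOAq4-3}), at the cost of the explicit determinant case analysis, whereas the paper's version is much shorter because the finite-geometry work is hidden inside the citation to Bush and the large-set completion is delegated to Lemma \ref{Etzion} rather than to the linear/coset structure you exploit.
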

\begin{proof}
(1) For $q=2^m$, $m\geq 1$, it is clear that there exists an OA$(q^3,k,q,3)$ for $3\leq k\leq q+2$ by deleting $q+2-k$ columns from an OA$(q^3,q+2,q,3)$ coming from \cite{Bush}. It follows that there exists an LOA$(q^3,k,q,3)$ by Lemma \ref{Etzion}.

(2) The proof is the same as that of (1).
\end{proof}

\begin{lem}[\cite{chen23}]\label{5,6}
(1) There exists an LOA$(s^2,4,s,2)$ for $s \not\in \{2,6\}$.

(2) There exists an LOA$(s^2,5,s,2)$ for $s \not\in \{2,3,6,10\}$.

(3) There exists an LOA$(s^2,6,s,2)$ for $s \not\in \{2,3,4,6,10,22\}$.

(4) There exists an LOA$(v^3,5,v,3)$ for $v\geq4$ and $v\not\equiv2 \pmod 4$.

(5) There exists an LOA$(v^3,6,v,3)$ for $v$ satisfying gcd$\{v,4\}\neq2$ and gcd$\{v,18\}\neq3$.

(6) There exists an LOA$((mv)^3,6,mv,3)$ for $m\in\{5,7,11\}$ and
any odd positive integer $v$.

(7) There exists an LOA$(s^t,t+1,s,t)$ for any positive integers $s$ and $t$.
\end{lem}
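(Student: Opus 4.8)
The plan is to exploit that every one of the seven assertions has the shape $\mathrm{LOA}(v^{t},k,v,t)$, or $\mathrm{LOA}((mv)^{t},k,mv,t)$, in which the number of runs equals $v^{t}$ --- the index-$1$ value for strength $t$. Hence Lemma \ref{Etzion} applies uniformly: in each case it suffices to exhibit one index-$1$ $\mathrm{OA}(v^{t},k,v,t)$, and the large set then follows automatically. Thus the whole lemma reduces to seven orthogonal-array existence statements, five of which are classical.

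For (1)--(3) I would use the standard dictionary that an index-$1$ $\mathrm{OA}(s^{2},k,s,2)$ is the same object as a set of $k-2$ mutually orthogonal Latin squares of order $s$ (equivalently, a transversal design $\mathrm{TD}(k,s)$). Then (1), (2), (3) are precisely the classical facts $N(s)\ge 2$ for $s\notin\{2,6\}$, $N(s)\ge 3$ for $s\notin\{2,3,6,10\}$, and $N(s)\ge 4$ for $s\notin\{2,3,4,6,10,22\}$, which I would quote from the MOLS literature and feed into Lemma \ref{Etzion}. For (7) I would write the array explicitly: over $\mathbb{Z}_{s}$ take the $s^{t}$ rows of the full factorial in $t$ factors and append the column whose entry is the sum modulo $s$ of the first $t$ entries; deleting the sum column leaves the full factorial, and deleting any one original column $x_{j}$ still leaves $x_{j}$ recoverable by subtraction, so any $t$ of the $t+1$ columns is a bijection onto $\mathbb{Z}_{s}^{t}$. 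Hence the array has strength $t$ and index $1$, and Lemma \ref{Etzion} finishes (7) for all positive integers $s,t$.

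Parts (4)--(6) carry the real content. Here I would produce the needed index-$1$ $\mathrm{OA}(v^{3},k,v,3)$ from difference matrices, using the strength-$3$ counterpart of the mechanism behind Constructions \ref{chai1}--\ref{chai2}: a $(v,c,1)$-difference matrix ``developed in a cube'' yields an index-$1$ $\mathrm{OA}(v^{3},c+1,v,3)$, arranged so that \emph{every} triple of output columns --- not merely one distinguished triple --- attains strength $3$. Granting this, (4) is immediate from Lemma \ref{v41DM}, which supplies a $(v,4,1)$-DM for $v\ge 4$, $v\not\equiv 2\pmod 4$, hence an $\mathrm{OA}(v^{3},5,v,3)$; (5) follows from the existence spectrum of $(v,5,1)$-DMs, whose exceptions are exactly $\gcd(v,4)=2$ or $\gcd(v,18)=3$, hence an $\mathrm{OA}(v^{3},6,v,3)$; and for (6) I would combine a $(m,5,1)$-DM (available for $m\in\{5,7,11\}$ from the field $\mathbb{F}_{m}$) with the $(v,4,1)$-DM guaranteed for every odd $v$ by Lemma \ref{v41DM}, using the slack afforded by $m\ge 5$ to recover a fifth column over $mv$, and I would dispose of the small residual orders by hand --- the odd $v$ with $\gcd(v,18)=3$, namely $v=3,15,21,\dots$, which are exactly the ones the bare argument of (5) misses. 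In each case Lemma \ref{Etzion} then upgrades the OA to the stated LOA; for $v$ a prime power these strength-$3$ cases are also covered directly by Lemma \ref{03}(2).

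The hard part is the strength-$3$ cube-development behind (4)--(6): one must choose the second and third ``layers'' of the development so that the induced map from the $v^{3}$ parameter triples is a bijection onto $G^{3}$ \emph{after restriction to any three of the columns}, whereas the difference-matrix axiom yields this directly only after restriction to two columns. After that comes the small-order bookkeeping --- checking that the exceptional set for $(v,5,1)$-DMs is indeed $\gcd(v,4)=2$ or $\gcd(v,18)=3$, and that the fifth-column recovery for (6) really goes through over every $mv$ with $m\in\{5,7,11\}$ and $v$ odd, including the orders with $\gcd(v,9)=3$ where $v$ itself admits no $(v,5,1)$-DM. Once the cube-development lemma is secured, parts (1)--(3) and (7) are routine consequences of the Lemma \ref{Etzion} reduction.
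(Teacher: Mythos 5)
This lemma is quoted in the paper from \cite{chen23} and carries no internal proof, so there is nothing to compare line by line; the intended justification is exactly your opening reduction — since every array in sight has $N=v^t$ (index unity), Lemma \ref{Etzion} converts an OA$(v^t,k,v,t)$ into the stated LOA — combined with citations for the underlying OAs: the MOLS tables for (1)--(3), known strength-3 existence results (e.g.\ of the type in \cite{Ji10}) for (4)--(6), and the elementary sum-column array for (7). Your treatment of (1)--(3) and (7) is correct and matches this route.

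The genuine gap is in (4)--(6), and it is precisely the step you yourself flag: the ``strength-3 cube-development lemma'' is the entire content of these parts, and it is neither proved in your proposal nor a quotable fact. A $(v,4,1)$-DM only constrains differences in \emph{pairs} of columns, and there is no general mechanism by which such a strength-2 object, developed over $G^2$ or $G^3$, produces an index-one OA$(v^3,5,v,3)$ in which \emph{every} triple of columns is balanced; this is exactly why Constructions \ref{chai1}--\ref{chai2} in this paper extract only strength 2 from a $(v,4,1)$-DM, with a single distinguished triple of strength 3, and why the literature behind (4)--(6) relies on considerably heavier constructions rather than cube development. Your supporting claims also fail on concrete cases: the $(v,5,1)$-DM spectrum is not ``exceptions exactly $\gcd(v,4)=2$ or $\gcd(v,18)=3$'' — for $v=4$ both hypotheses of part (5) hold and OA$(4^3,6,4,3)$ does exist (Bush's construction for $q$ even), yet no $(4,5,1)$-DM can exist, since it would yield four MOLS of order $4$ while $N(4)=3$ — so even granting a cube-development lemma, your route would miss orders that part (5) covers; and the sketch for (6) (``slack afforded by $m\ge 5$ to recover a fifth column'') is not an argument. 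Hence parts (4)--(6) remain unproven in your proposal, whereas the cited source disposes of them by invoking known strength-3 orthogonal arrays and Lemma \ref{Etzion}.
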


By using Constructions \ref{3-2}, we can obtain some new OA with small runs listed in the following table, where the LOAs in the first column of the following table are from Lemma \ref{M-1}, \textbf{Table 1}, \textbf{Table 2} and Example 3 in \cite{chen23}.

\begin{center}
\textbf{Table 4 \ New OAs with small runs}
\vskip 8pt
{\footnotesize
\begin{tabular}{lll}
\hline Known LOA &  Parameters in Constructions \ref{3-2} & New OAs\\
\hline
LOA$(2,3,2,1)$ and LOA$(44,11^12^4,2)$&$N_1=2, N_2=44,h=4$&OA$(352,11^12^7,4)$\\
LOA$(24,6^14^12^2,2)$ and LOA$(4,2,4,1)$&$N_1=24, N_2=4,h=4$&OA$(384,6^14^32^2,4)$\\
LOA$(2,3,2,1)$ and LOA$(52,13^12^4,2)$&$N_1=2, N_2=52,h=4$&OA$(416,13^12^7,4)$\\
LOA$(2,3,2,1)$and LOA$(68,17^12^4,2)$&$N_1=2, N_2=68,h=4$&OA$(544,17^12^7,4)$\\
LOA$(2,3,2,1)$and LOA$(76,19^12^4,2)$&$N_1=2, N_2=76,h=4$&OA$(608,19^12^7,4)$\\

LOA$(2,3,2,1)$and LOA$(84,14^16^12^2,2)$&$N_1=2, N_2=84,h=4$&OA$(672,14^16^12^5,4)$\\
LOA$(4,2,4,1)$ and LOA$(44,11^12^4,2)$&$N_1=4, N_2=44,h=4$&OA$(704,11^12^44^2,4)$\\

LOA$(2,3,2,1)$and LOA$(116,29^12^4,2)$&$N_1=2, N_2=116,h=4$&OA$(928,29^12^7,4)$\\
LOA$(2,3,2,1)$ and LOA$(120,12^110^12^2)$&$N_1=2, N_2=120,h=4$&OA$(960,12^110^12^5,4)$\\
LOA$(2,3,2,1)$and LOA$(132,12^16^12^2,2)$&$N_1=2, N_2=132,h=4$&OA$(1056,22^16^12^5,4)$\\
LOA$(2,3,2,1)$and LOA$(144,24^16^12^2,2)$&$N_1=2, N_2=144,h=4$&OA$(1152,24^16^12^5,4)$\\

LOA$(4,3,2,2)$ and LOA$(36,6^13^12^2,2)$&$N_1=4, N_2=36,h=2$&OA$(288,6^13^12^5,5)$\\
LOA$(2,3,2,1)$ and LOA$(48,4^13^12^4,3)$&$N_1=2, N_2=48,h=4$&OA$(384,3^14^12^7,5)$\\
LOA$(2,3,2,1)$ and LOA$(80,5^14^12^4,3)$&$N_1=2, N_2=80,h=4$&OA$(640,5^14^12^{7},5)$\\
LOA$(2,4,2,1)$ and LOA$(40,5^12^6,3)$&$N_1=2, N_2=40,h=8$&OA$(640,5^12^{10},5)$\\
LOA$(2,4,2,1)$ and LOA$(56,2^67^1,3)$&$N_1=2, N_2=56,h=8$&OA$(896,7^12^{10},5)$\\
LOA$(12,3^12^4,2)$and LOA$(20,5^12^4,2)$&$N_1=12, N_2=20,h=4$&OA$(960,5^13^12^8,5)$\\
LOA$(12,3^12^4,2)$and LOA$(24,4^13^12^3,2)$&$N_1=12, N_2=24,h=4$&OA$(1153,4^13^22^7,5)$\\
LOA$(12,3^12^4,2)$and LOA$(28,7^12^4,2)$&$N_1=12, N_2=28,h=4$&OA$(1344,7^13^12^8,5)$\\
LOA$(12,3^12^4,2)$and LOA$(16,6,2,3)$&$N_1=12, N_2=16,h=4$&OA$(768,3^12^{10},6)$\\
LOA$(8,4^12^4,2)$and LOA$(40,5^12^6,3)$&$N_1=8, N_2=40,h=8$&OA$(2560,5^14^12^{10},6)$\\
\hline
\end{tabular}}
\end{center}

\begin{thm}\label{v1+v3-2}
If $v\geq4$ and $v\not\equiv2 \pmod 4$, then there exist an OA$(v^{k+1},2k-2,v,4)$ for $5\leq k\leq 13$ and an OA$(v^{k+2},2k-1,v,4)$ for $14\leq k\leq 28$.
\end{thm}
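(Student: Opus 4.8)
The plan is to invoke Construction \ref{3-1} in the symmetric case $p=q=v$, feeding it one ``nontrivial'' large set of strength $2$ coming from Theorem \ref{chai11} and one ``trivial'' large set of strength $1$ coming from Lemma \ref{M-1}; the output strength is then $t_1+t_2+1=2+1+1=4$, exactly as required. The only real work is the bookkeeping of run sizes, which pins down the number of columns taken from the strength-$1$ large set.

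First I would treat the range $5\le k\le 13$. Since $v\ge 4$ and $v\not\equiv 2\pmod 4$, Theorem \ref{chai11} supplies an LOA$(v^3,k,v,2)$; and by Lemma \ref{M-1}, taking $k-2$ moduli all equal to $v$ (so that $lcm\{v,\ldots,v\}=v$ and $k-2\ge 1$), there is an LOA$(v,k-2,v,1)$. Apply Construction \ref{3-1} with $N_1=v^3$, $m=k$, $t_1=2$ and $N_2=v$, $n=k-2$, $t_2=1$ (and $p=q=v$). Then
\[
h=lcm\Big\{\tfrac{v^{k}}{v^{3}},\ \tfrac{v^{k-2}}{v}\Big\}=lcm\{v^{k-3},v^{k-3}\}=v^{k-3},
\]
so $hN_1N_2=v^{k-3}\cdot v^{3}\cdot v=v^{k+1}$, the number of factors is $m+n=k+(k-2)=2k-2$, and the strength is $t_1+t_2+1=4$. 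This gives the desired OA$(v^{k+1},2k-2,v,4)$.

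Next I would treat $14\le k\le 28$ in exactly the same fashion, but starting from the strength-$2$ large set on $v^{4}$ runs: by Theorem \ref{chai11} there is an LOA$(v^4,k+1,v,2)$ because $4\le k+1\le 29$, and again an LOA$(v,k-2,v,1)$ by Lemma \ref{M-1}. Applying Construction \ref{3-1} with $N_1=v^{4}$, $m=k+1$, $t_1=2$ and $N_2=v$, $n=k-2$, $t_2=1$ yields
\[
h=lcm\Big\{\tfrac{v^{k+1}}{v^{4}},\ \tfrac{v^{k-2}}{v}\Big\}=lcm\{v^{k-3},v^{k-3}\}=v^{k-3},
\]
hence $hN_1N_2=v^{k-3}\cdot v^{4}\cdot v=v^{k+2}$, the number of factors is $m+n=(k+1)+(k-2)=2k-1$, and the strength is $4$, giving OA$(v^{k+2},2k-1,v,4)$.

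The arithmetic is elementary, so I do not expect a genuine obstacle; the one point needing care is to match the ``defect'' ratios $v^{m}/N_1$ and $v^{n}/N_2$ so that their least common multiple is a clean power of $v$. That matching is precisely why the strength-$1$ ingredient must contribute $k-2$ columns (so that $v^{n}/N_2=v^{k-3}$ coincides with $v^{m}/N_1$); choosing it otherwise would inflate the run size. I would also check the column-count hypotheses of Theorem \ref{chai11}, namely $3\le k\le 13$ in the first case and $4\le k+1\le 29$ in the second, both of which hold throughout the stated ranges.
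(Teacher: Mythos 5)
Your proposal is correct and follows essentially the same route as the paper: combine the LOA$(v^3,k,v,2)$ (resp.\ LOA$(v^4,k+1,v,2)$) from Theorem \ref{chai11} with the LOA$(v,k-2,v,1)$ from Lemma \ref{M-1} via the Kronecker-product construction, the only cosmetic difference being that the paper cites the mixed-level Construction \ref{3-2} while you use the symmetric Construction \ref{3-1}, which coincide when all levels equal $v$. Your explicit verification of $h=v^{k-3}$ and of the run sizes $v^{k+1}$, $v^{k+2}$ simply spells out the arithmetic the paper leaves implicit.
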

\begin{proof}
For any positive integers $v$ and $k$, there exists an LOA$(v,k-2,v,1)$
by Lemma \ref{M-1}. For any integer $v\geq4$, $v\not\equiv2 \pmod 4$
there exist an LOA$(v^3,k,v,2)$ for $5\leq k\leq 13$ and an LOA$(v^4,k+1,v,2)$
for $14\leq k\leq 28$ by Theorem \ref{chai11}.
Then by applying Construction \ref{3-2}, there exist an OA$(v^{k+1},2k-2,v,4)$ for $5\leq k\leq 13$ and an OA$(v^{k+2},2k-1,v,4)$ for $14\leq k\leq 28$.
\end{proof}

\begin{thm}\label{doublev3-2}
If $v\geq4$ and $v\not\equiv2 \pmod 4$, then there exist an OA$(v^{k+3},2k,v,5)$ for $5\leq k\leq 13$ and an OA$(v^{k+4},2k,v,5)$ for $14\leq k\leq 29$.
\end{thm}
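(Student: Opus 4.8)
The plan is to apply Construction \ref{3-2} with two strength-$2$ large sets, so that the resulting strength is $t = t_1 + t_2 + 1 = 2 + 2 + 1 = 5$. All the constituent large sets are already available from Theorem \ref{chai11}, which (for $v \ge 4$, $v \not\equiv 2 \pmod 4$) provides an LOA$(v^3, k, v, 2)$ for every $3 \le k \le 13$ and an LOA$(v^4, k, v, 2)$ for every $4 \le k \le 29$.

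For the range $5 \le k \le 13$, I would take both inputs to Construction \ref{3-2} to be an LOA$(v^3, k, v, 2)$. Then $N_1 = N_2 = v^3$, each factor product equals $v^k$, so $h = lcm\{v^k / v^3,\, v^k / v^3\} = v^{k-3}$ and the run size is $h N_1 N_2 = v^{k-3}\cdot v^3 \cdot v^3 = v^{k+3}$; the output has $k + k = 2k$ factors on $v$ levels and strength $5$, i.e.\ an OA$(v^{k+3}, 2k, v, 5)$. Note that Theorem \ref{chai11} actually supplies the needed large set for all $3 \le k \le 13$, so the sub-range $5 \le k \le 13$ claimed here is covered with room to spare.

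For the range $14 \le k \le 29$, I would instead feed Construction \ref{3-2} two copies of an LOA$(v^4, k, v, 2)$, which exists for $4 \le k \le 29$ and hence for all $k$ in $\{14, \dots, 29\}$. Here $N_1 = N_2 = v^4$, each factor product is $v^k$, so $h = lcm\{v^{k-4},\, v^{k-4}\} = v^{k-4}$, the run size is $v^{k-4} \cdot v^4 \cdot v^4 = v^{k+4}$, the number of factors is $2k$, and the strength is again $5$; this is the claimed OA$(v^{k+4}, 2k, v, 5)$.

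Because this is a straight substitution into Construction \ref{3-2}, there is no substantive obstacle; the only things to be careful about are bookkeeping --- verifying that the stated index ranges sit inside the existence ranges of Theorem \ref{chai11}, and computing $h$ (hence the run size) correctly as $v^{k-3}$ resp.\ $v^{k-4}$. It is also worth recording why the statement splits into two ranges: for $k \le 13$ the $v^3$-based large set yields the smaller run size $v^{k+3}$, whereas for $k \ge 14$ Theorem \ref{chai11} no longer supplies an LOA$(v^3, k, v, 2)$ and one must fall back on the $v^4$-based large set, which costs an extra factor of $v$ in the run size.
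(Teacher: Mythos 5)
Your proposal is correct and follows essentially the same route as the paper: both apply Construction \ref{3-2} to two copies of the LOA$(v^3,k,v,2)$ from Theorem \ref{chai11} for $5\leq k\leq 13$, and to two copies of the LOA$(v^4,k,v,2)$ for $14\leq k\leq 29$, with strength $2+2+1=5$. Your explicit bookkeeping of $h=v^{k-3}$ (resp.\ $v^{k-4}$) and the resulting run sizes $v^{k+3}$ (resp.\ $v^{k+4}$) matches, and in fact spells out, the computation the paper leaves implicit.
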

\begin{proof}
For $v\geq4$, $v\not\equiv2 \pmod 4$ and $5\leq k\leq 13$, there exist two LOA$(v^3,k,v,2)$'s by Theorem \ref{chai11}. By Construction \ref{3-2}, there exists an OA$(v^{k+3},2k,v,5)$.

In a similar to above, there exists an OA$(v^{k+4},2k,v,5)$ for $14\leq k\leq 29$ by Construction \ref{3-2} with two LOA$(v^4,k,v,2)$'s from Theorem \ref{chai11}.
\end{proof}

\begin{thm}\label{v1+q4-3}
Suppose that $v$ and $k_1$ are positive integers, $q$ is a prime power with $q\geq3$ and integer $k_2$ satisfying $4\leq k_2\leq{q^2+1}$, then there exists an OA$(v{q^4}{h},v^{k_1-2}{q^{k_2}},5)$, where $h=lcm\{v^{k_1-3},q^{k_2-4}\}$.
\end{thm}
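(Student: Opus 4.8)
The plan is to apply Construction~\ref{3-2} with an appropriate pair of large sets of orthogonal arrays. The target OA has run size $v q^4 h$ with $h = lcm\{v^{k_1-3}, q^{k_2-4}\}$ and factor structure $v^{k_1-2} q^{k_2}$, and since Construction~\ref{3-2} produces strength $t_1+t_2+1$ from ingredient strengths $t_1$ and $t_2$, we want $t_1 + t_2 + 1 = 5$, so we should combine a strength-$1$ LOA with a strength-$3$ LOA.

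First I would invoke Lemma~\ref{M-1} to obtain an LOA$(v, v^{k_1-2}, 1)$; here we apply the lemma with all $k_1-2$ symbol sizes equal to $v$, so $N_1 = lcm\{v,\ldots,v\} = v$, and the ratio $\prod s_i / N_1 = v^{k_1-2}/v = v^{k_1-3}$. Next I would invoke Theorem~\ref{LOAq4-3}, which gives an LOA$(q^4, q^{k_2}, 3)$ for a prime power $q \geq 3$ and $4 \leq k_2 \leq q^2+1$; here $N_2 = q^4$ and the ratio $\prod s_j / N_2 = q^{k_2}/q^4 = q^{k_2-4}$. Both ratios are exactly the quantities whose least common multiple defines $h$, so $h = lcm\{v^{k_1-3}, q^{k_2-4}\}$ matches the hypothesis of the theorem.

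Then applying Construction~\ref{3-2} with these two ingredients yields an OA$(h N_1 N_2, v^{k_1-2} q^{k_2}, t_1+t_2+1)$. Substituting $N_1 = v$, $N_2 = q^4$, $t_1 = 1$, $t_2 = 3$ gives an OA$(v q^4 h, v^{k_1-2} q^{k_2}, 5)$, which is precisely the claimed array. There is essentially no obstacle here beyond checking that the hypotheses of the two cited results are met: Lemma~\ref{M-1} holds for all positive integers, and Theorem~\ref{LOAq4-3} is exactly the stated constraint $q \geq 3$ a prime power and $4 \leq k_2 \leq q^2+1$. The only point requiring a word of care is the bookkeeping that the two column-count ratios coincide with the arguments of the $lcm$ in $h$; once that is observed the result is immediate from Construction~\ref{3-2}.
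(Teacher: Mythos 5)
Your proposal is correct and follows exactly the paper's own argument: take the trivial strength-$1$ LOA$(v,k_1-2,v,1)$ from Lemma~\ref{M-1} and the LOA$(q^4,k_2,q,3)$ from Theorem~\ref{LOAq4-3}, then apply Construction~\ref{3-2} to get strength $1+3+1=5$ with $h=lcm\{v^{k_1-3},q^{k_2-4}\}$. The bookkeeping on the ratios matches the paper's use of $h$, so nothing further is needed.
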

\begin{proof}
For any positive integers $v$ and $k_1$, there exists an LOA$(v,{k_1}-2,v,1)$
by Lemma \ref{M-1}. By Theorem \ref{LOAq4-3},
there exists an LOA$(q^4,k_2,q,3)$ for any prime power $q\geq3$ and $4\leq k_2\leq{q^2+1}$.
By Construction \ref{3-2}, there exists an OA$(v{q^4}{h},v^{k_1-2}{q^{k_2}},5)$, where $h=lcm\{v^{k_1-3},q^{k_2-4}\}$.
\end{proof}

\begin{thm}\label{v12n-4}
Suppose that $v$ and $k_1$ are positive integers, and $n\geq2$, then there exist an OA$({2^n}v{h_1},v^{k_1-2}{2^{k_2}},4)$ for $n\leq k_2\leq{2^n-1}$ and
an OA$({2^{n+1}}v{h_2},v^{k_1-2}{2^{k_2'}},5)$ for $n+1\leq k_2'\leq{2^n}$, where $h_1=lcm\{v^{k_1-3},2^{k_2-n}\}$ and $h_2=lcm\{v^{k_1-3},2^{k_2'-n-1}\}$.
\end{thm}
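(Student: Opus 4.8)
The plan is to obtain Theorem \ref{v12n-4} as a direct application of Construction \ref{3-2}, combining the trivial strength-one large set over $\mathbb{Z}_v$ with the binary large sets furnished by the Hadamard construction of Section 2.3. First I would invoke Lemma \ref{M-1} with all $k_1-2$ coordinates equal to $v$: since $lcm\{v,v,\ldots,v\}=v$, this produces an LOA$(v,v^{k_1-2},1)$, i.e.\ an LOA$(v,k_1-2,v,1)$, for which the relevant index is $\frac{v^{k_1-2}}{v}=v^{k_1-3}$. This plays the role of the first factor throughout.

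For the strength-$4$ family I would then take an LOA$(2^{n},k_2,2,2)$, which exists for $n\geq2$ and $n\leq k_2\leq 2^{n}-1$ by Theorem \ref{LOA$(2^n)$}; here $\frac{2^{k_2}}{2^{n}}=2^{k_2-n}$. Feeding the two large sets LOA$(v,k_1-2,v,1)$ and LOA$(2^{n},k_2,2,2)$ into Construction \ref{3-2} yields an OA$(h N_1 N_2,\,v^{k_1-2}2^{k_2},\,t_1+t_2+1)$ with $N_1=v$, $N_2=2^{n}$, $t_1=1$, $t_2=2$, and $h=lcm\{v^{k_1-3},2^{k_2-n}\}=h_1$; that is, an OA$(2^{n}v h_1,\,v^{k_1-2}2^{k_2},\,4)$, exactly as claimed, and the admissible range for $k_2$ is precisely that of Theorem \ref{LOA$(2^n)$}.

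For the strength-$5$ family the argument is identical except that I would use the strength-$3$ binary large set instead: by Theorem \ref{LOA-level2-str3} there is an LOA$(2^{n+1},k_2',2,3)$ for $n\geq2$ and $n+1\leq k_2'\leq 2^{n}$, with $\frac{2^{k_2'}}{2^{n+1}}=2^{k_2'-n-1}$. Applying Construction \ref{3-2} to this together with the LOA$(v,k_1-2,v,1)$ from Lemma \ref{M-1} gives an OA$(2^{n+1}v h_2,\,v^{k_1-2}2^{k_2'},\,1+3+1)=$ OA$(2^{n+1}v h_2,\,v^{k_1-2}2^{k_2'},\,5)$ with $h_2=lcm\{v^{k_1-3},2^{k_2'-n-1}\}$, again over the range inherited from Theorem \ref{LOA-level2-str3}.

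I do not expect any genuine obstacle: the theorem is essentially a corollary of Construction \ref{3-2}, so the work is purely bookkeeping — verifying that the run size $hN_1N_2$ collapses to $2^{n}vh_1$ (resp.\ $2^{n+1}vh_2$), that the $lcm$ indices match the stated $h_1,h_2$, that the strengths add as $t_1+t_2+1$, and that the index intervals $n\le k_2\le 2^{n}-1$ and $n+1\le k_2'\le 2^{n}$ are exactly those of the two Hadamard lemmas. The only point worth flagging is the implicit hypothesis $k_1\geq 3$, needed so that the $v^{k_1-2}$ block actually contributes a column; for $k_1\leq 2$ the statement degenerates and should be read accordingly.
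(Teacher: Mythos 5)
Your proposal is correct and follows essentially the same route as the paper: take the LOA$(v,k_1-2,v,1)$ from Lemma \ref{M-1}, pair it with the LOA$(2^n,k_2,2,2)$ of Theorem \ref{LOA$(2^n)$} (resp.\ the LOA$(2^{n+1},k_2',2,3)$ of Theorem \ref{LOA-level2-str3}), and apply Construction \ref{3-2}, with the same bookkeeping of run sizes and $lcm$ indices. Your remark about the implicit requirement $k_1\geq 3$ is a reasonable observation that the paper leaves tacit, but otherwise there is nothing to add.
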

\begin{proof}
For any positive integers $v$ and $k_1$, there exists an LOA$(v,{k_1}-2,v,1)$
by Lemma \ref{M-1}. For any integer $n\geq2$, $n\leq k_2\leq{2^n-1}$ and $n+1\leq k_2'\leq{2^n}$,
there exist an LOA$(2^n,k_2,2,2)$ and an LOA$(2^{n+1},k_2',2,2)$ by Theorem \ref{LOA$(2^n)$} and Theorem \ref{LOA-level2-str3} respectively.
Then by applying Construction \ref{3-2}, there exist an OA$({2^n}v{h_1},v^{k_1-2}{2^{k_2}},4)$ for $n\leq k_2\leq{2^n-1}$ and
an OA$({2^{n+1}}v{h_2},v^{k_1-2}{2^{k_2'}},5)$ for $n+1\leq k_2'\leq{2^n}$, where $h_1=lcm\{v^{k_1-3},2^{k_2-n}\}$ and $h_2=lcm\{v^{k_1-3},2^{k_2-n-1}\}$.
\end{proof}

\begin{thm}\label{qn2n-com}
Suppose that $q$ is a prime power and $n\geq2$. For $2\leq m\leq {k_1}\leq{\frac{q^m-1}{q-1}}$, $n\leq k_2\leq{2^n-1}$ and $n+1\leq k_2'\leq{2^n}$,  there exist an OA$({2^n}{q^m}{h},{q^{k_1}}{2^{k_2}},5)$ and an OA$({2^n}{q^m}{h'},{q^{k_1}}{2^{k_2'}},6)$, where $h=lcm\{q^{k_1-m},2^{k_2-n}\}$ and $h'=lcm\{q^{k_1-m},2^{k_2'-n-1}\}$.
\end{thm}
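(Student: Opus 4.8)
The plan is to obtain both families as a direct application of the composition construction of Construction \ref{3-1} to large sets of orthogonal arrays already produced in the earlier sections. First I would assemble the three ingredients. Because the hypothesis requires $2\le m\le k_1\le\frac{q^m-1}{q-1}$, Lemma \ref{02} supplies an LOA$(q^m,k_1,q,2)$. Because $n\ge2$ and $n\le k_2\le 2^n-1$, Theorem \ref{LOA$(2^n)$} supplies an LOA$(2^n,k_2,2,2)$; and because $n\ge2$ and $n+1\le k_2'\le 2^n$, Theorem \ref{LOA-level2-str3} supplies an LOA$(2^{n+1},k_2',2,3)$. Thus the three parameter windows in the statement are exactly the existence ranges of those three results, so nothing further is needed to guarantee the ingredients.

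For the first assertion I would apply Construction \ref{3-1} to the pair consisting of the LOA$(q^m,k_1,q,2)$ and the LOA$(2^n,k_2,2,2)$; that is, with $N_1=q^m$, $N_2=2^n$, first level $q$, second level $2$, degrees $k_1$ and $k_2$, and $t_1=t_2=2$. The construction then returns an OA$(hN_1N_2,q^{k_1}2^{k_2},t)$ with strength $t=t_1+t_2+1=5$ and $h=lcm\{q^{k_1}/q^m,\,2^{k_2}/2^n\}=lcm\{q^{k_1-m},2^{k_2-n}\}$, so that $hN_1N_2=2^nq^mh$. This is exactly the claimed OA$(2^nq^mh,q^{k_1}2^{k_2},5)$.

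For the second assertion I would pair the same LOA$(q^m,k_1,q,2)$ with the LOA$(2^{n+1},k_2',2,3)$ in Construction \ref{3-1}, now with $N_1=q^m$, $N_2=2^{n+1}$, $t_1=2$ and $t_2=3$. The output has strength $t_1+t_2+1=6$, degree $q^{k_1}2^{k_2'}$, and $h=lcm\{q^{k_1}/q^m,\,2^{k_2'}/2^{n+1}\}=lcm\{q^{k_1-m},2^{k_2'-n-1}\}=h'$, with run size $hN_1N_2=2^{n+1}q^mh'$, which is the second orthogonal array asserted in the theorem (of strength $6$ and degree $q^{k_1}2^{k_2'}$).

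I do not anticipate any real difficulty: the theorem is essentially a bookkeeping corollary of Construction \ref{3-1}, and the only two points needing a line of attention are (i) confirming that the prescribed ranges of $m,k_1,k_2,k_2'$ lie inside the validity ranges of Lemma \ref{02}, Theorem \ref{LOA$(2^n)$} and Theorem \ref{LOA-level2-str3}, and (ii) simplifying the ``$lcm$ of two quotients'' delivered by Construction \ref{3-1} into the advertised $lcm$ of prime-power terms. Both are immediate once the ingredient large sets have been named, so there is no genuine obstacle here beyond the routine arithmetic.
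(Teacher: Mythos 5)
Your proposal is correct and follows essentially the same route as the paper: the paper also takes an LOA$(q^{m},k_1,q,2)$ from Lemma \ref{02}, an LOA$(2^{n},k_2,2,2)$ from Theorem \ref{LOA$(2^n)$} and an LOA$(2^{n+1},k_2',2,3)$ from Theorem \ref{LOA-level2-str3}, and feeds the two pairs into Construction \ref{3-1} to get strengths $2+2+1=5$ and $2+3+1=6$ with the stated $lcm$ parameters. One point deserves attention, though: your computation gives the strength-$6$ array run size $h'N_1N_2=2^{n+1}q^{m}h'$, which is the correct output of Construction \ref{3-1} with $N_2=2^{n+1}$, whereas the theorem (and the paper's proof, which simply restates it) claims run size $2^{n}q^{m}h'$; these differ by a factor of $2$, and the smaller value cannot in general be right (for instance $q=3$, $m=2$, $k_1=4$, $n=2$, $k_2'=4$ would give an OA$(648,3^{4}2^{4},6)$, but $648$ is not divisible by $3^{2}\cdot 2^{4}=144$, violating the strength-$6$ divisibility condition). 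So your derivation is the accurate one, but you should not assert that it "is the second orthogonal array asserted in the theorem" without noting that the stated run size appears to be a typo for $2^{n+1}q^{m}h'$.
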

\begin{proof}
For $2\leq m\leq {k_1}\leq{\frac{q^m-1}{q-1}}$, there exists an LOA$(q^{n},k_1, q, 2)$
by Lemma \ref{02}. For $n\geq2$, $n\leq k_2\leq{2^n-1}$ and $n+1\leq k_2'\leq{2^n}$,
there exist an LOA$(2^n,k_2,2,2)$ and an LOA$(2^{n+1},k_2',2,3)$ by Theorem \ref{LOA$(2^n)$} and Theorem \ref{LOA-level2-str3} respectively.
By applying Construction \ref{3-1}, there exist an OA$({q^m}{2^n}{h},{q^{k_1}}{2^{k_2}},5)$  and an OA$({2^n}{q^m}{h'},{q^{k_1}}{2^{k_2'}},6)$, where $h=lcm\{q^{k_1-m},2^{k_2-n}\}$ and $h'=lcm\{q^{k_1-m},2^{k_2'-n-1}\}$.
\end{proof}

\begin{thm}\label{qn2v32=5}
Suppose that $q$ is a prime power, $v\geq4$ and $v\not\equiv2 \pmod 4$, and $2\leq m\leq {k_1}\leq{\frac{q^m-1}{q-1}}$. There exist an OA$(q^m{v^3}{h},q^{k_1}{v^{k_2}},5)$ for $4\leq k_2\leq 13$ and OA$(q^m{v^4}{h'},q^{k_1}{v^{k_2}},5)$ for
$4\leq k_2\leq 29$, where $h=lcm\{q^{k_1-m},v^{k_2-3}\}$ and $h'=lcm\{q^{k_1-m},v^{k_2-4}\}$.
\end{thm}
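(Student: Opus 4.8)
The plan is to obtain the desired orthogonal arrays by taking one large set over $q$ levels and one over $v$ levels and feeding them into the Kronecker-product construction of Construction \ref{3-1}, exactly as was done in the proofs of Theorems \ref{v1+q4-3} and \ref{v12n-4}. Both ingredient large sets are symmetric (a single prime-power level each), so Construction \ref{3-1}, rather than the mixed version Construction \ref{3-2}, is the appropriate tool.

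First I would invoke Lemma \ref{02}: since $q$ is a prime power and $2\le m\le k_1\le\frac{q^m-1}{q-1}$, there exists an LOA$(q^m,k_1,q,2)$, for which $\frac{q^{k_1}}{q^m}=q^{k_1-m}$. Next, for the first assertion I would apply Theorem \ref{chai11}: because $v\ge4$ and $v\not\equiv2\pmod4$, there exists an LOA$(v^3,k_2,v,2)$ for $3\le k_2\le13$, hence in particular for every $k_2$ in the stated range $4\le k_2\le13$, with $\frac{v^{k_2}}{v^3}=v^{k_2-3}$. Then Construction \ref{3-1}, applied with $N_1=q^m$, strength $t_1=2$, and $N_2=v^3$, strength $t_2=2$, produces an OA$(h\,N_1N_2,\,q^{k_1}v^{k_2},\,t)$ with $h=lcm\{q^{k_1-m},v^{k_2-3}\}$ and $t=t_1+t_2+1=5$; since $N_1N_2=q^mv^3$ this is precisely an OA$(q^m v^3 h,\,q^{k_1}v^{k_2},\,5)$.

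For the second assertion I would instead take the LOA$(v^4,k_2,v,2)$ guaranteed by Theorem \ref{chai11} for $4\le k_2\le29$, noting $\frac{v^{k_2}}{v^4}=v^{k_2-4}$, and combine it with the same LOA$(q^m,k_1,q,2)$ via Construction \ref{3-1}; this yields an OA$(q^m v^4 h',\,q^{k_1}v^{k_2},\,5)$ with $h'=lcm\{q^{k_1-m},v^{k_2-4}\}$, as claimed.

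There is no genuine obstacle here: the argument is a direct concatenation of previously established results. The only point requiring care is the bookkeeping, namely checking that the quotients $\frac{q^{k_1}}{q^m}$ and $\frac{v^{k_2}}{v^3}$ (respectively $\frac{v^{k_2}}{v^4}$) match the quantities $\frac{p^m}{N_1}$ and $\frac{q^n}{N_2}$ appearing in the hypothesis of Construction \ref{3-1}, so that its $lcm$ expression reproduces the stated $h$ and $h'$, and that the claimed strength is $2+2+1=5$. Everything else is immediate.
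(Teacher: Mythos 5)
Your proposal is correct and follows essentially the same route as the paper: it combines the LOA$(q^m,k_1,q,2)$ from Lemma \ref{02} with the LOA$(v^3,k_2,v,2)$ (resp. LOA$(v^4,k_2,v,2)$) from Theorem \ref{chai11} via Construction \ref{3-1}, obtaining strength $2+2+1=5$ and the stated values of $h$ and $h'$. The bookkeeping of the quotients $q^{k_1-m}$ and $v^{k_2-3}$ (resp. $v^{k_2-4}$) is exactly what the paper's proof does implicitly.
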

\begin{proof}
For $2\leq m\leq {k_1}\leq{\frac{q^m-1}{q-1}}$, there exists an LOA$(q^{n},k_1, q, 2)$
by Lemma \ref{02}. For any integer $v\geq4$ and $v\not\equiv2 \pmod 4$,
there exist an LOA$(v^3,k_2,v,2)$ for $4\leq k_2\leq 13$ and an LOA$(v^4,k_2,v,2)$
for $4\leq k_2\leq 29$ by Theorems \ref{chai11}.
By applying Construction \ref{3-1}, there exist an OA$(q^m{v^3}{h},q^{k_1}{v^{k_2}},5)$ for $4\leq k_2\leq 13$ and OA$(q^m{v^4}{h'},q^{k_1}{v^{k_2}},5)$ for
$4\leq k_2\leq 29$.
\end{proof}

\begin{thm}\label{qn2q43=6}
Suppose that $n$ is an integer, $p$ and $q$ are prime power with $p,q\geq3$, there exists an OA$({p^4}{q^n}{h},p^{k_1}q^{k_2},6)$ for $4\leq k_1\leq{p^2+1}$ and $2\leq n\leq {k_2}\leq \frac{q^n-1}{q-1}$, where $h=lcm\{p^{k_1-4},q^{k_2-n}\}$.
\end{thm}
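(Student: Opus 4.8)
The plan is to realize the claimed array as the product of two large sets of orthogonal arrays via Construction \ref{3-1}, in exactly the spirit of the proofs of Theorems \ref{v1+q4-3}--\ref{qn2v32=5}: one factor of strength $3$ built from the finite field construction, and one factor of strength $2$ built from the projective-geometry construction, so that the resulting strength is $3+2+1=6$.

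First I would invoke Theorem \ref{LOAq4-3}: since $p$ is a prime power with $p\geq3$, there exists an LOA$(p^4,k_1,p,3)$ for every $k_1$ with $4\leq k_1\leq p^2+1$. This is the first ingredient, with run size $N_1=p^4$, strength $t_1=3$, and index quotient $\frac{p^{k_1}}{N_1}=p^{k_1-4}$. Next I would invoke Lemma \ref{02}: since $q$ is a prime power (here the hypothesis $q\geq3$ is more than sufficient), there exists an LOA$(q^n,k_2,q,2)$ whenever $2\leq n\leq k_2\leq\frac{q^n-1}{q-1}$. This is the second ingredient, with run size $N_2=q^n$, strength $t_2=2$, and index quotient $\frac{q^{k_2}}{N_2}=q^{k_2-n}$.

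Finally I would feed these two large sets into Construction \ref{3-1}, taking the first base to be $p$ with $m=k_1$ and $t_1=3$, and the second base to be $q$ with the construction's parameter $n$ equal to $k_2$ and $t_2=2$. Construction \ref{3-1} then yields an OA$(hN_1N_2,\,p^{k_1}q^{k_2},\,t)$ with $t=t_1+t_2+1=6$ and $h=lcm\{\frac{p^{k_1}}{N_1},\frac{q^{k_2}}{N_2}\}=lcm\{p^{k_1-4},q^{k_2-n}\}$, that is, an OA$(p^4q^nh,\,p^{k_1}q^{k_2},\,6)$, which is precisely the assertion.

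Since every step is a direct application of a previously established result, there is essentially no genuine obstacle here; the only point requiring care is to check that the parameter ranges demanded by Theorem \ref{LOAq4-3} and Lemma \ref{02} match the ranges in the statement, namely $4\leq k_1\leq p^2+1$ and $2\leq n\leq k_2\leq\frac{q^n-1}{q-1}$, which they do verbatim, and to note that the restriction $p\geq3$ enters only because Theorem \ref{LOAq4-3} requires its base to be an odd prime power, whereas $q$ is allowed to be an arbitrary prime power. One should also record, as in the sibling theorems, that $h$ is well defined as a least common multiple of positive integers, so the run count $p^4q^nh$ is a genuine multiple of $N_1N_2$ and the construction applies without degeneracy.
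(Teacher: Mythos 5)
Your proposal is correct and follows essentially the same route as the paper: combine the LOA$(p^4,k_1,p,3)$ from Theorem \ref{LOAq4-3} with the LOA$(q^n,k_2,q,2)$ from Lemma \ref{02} via the Kronecker-product construction (the paper cites Construction \ref{3-2}, you cite its symmetric-level special case Construction \ref{3-1}, which is interchangeable here), giving strength $3+2+1=6$ and $h=lcm\{p^{k_1-4},q^{k_2-n}\}$. The only slip is your parenthetical that Theorem \ref{LOAq4-3} needs an \emph{odd} prime power; it only needs a prime power $\geq 3$ (e.g.\ $p=4$ is allowed), which does not affect the argument.
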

\begin{proof}
For any prime power $p\geq3$ and $4\leq k_1\leq{p^2+1}$,
there exists an LOA$(p^4,k_1,p,3)$ by Theorem \ref{LOAq4-3}, and
for any prime power $q\geq3$ and $2\leq n\leq {k_2}\leq {\frac{q^n-1}{q-1}}$, there exists an LOA$(q^n,{k_2},q,2)$ by Lemma \ref{02}.
By Construction \ref{3-2}, there exists an OA$({p^4}{q^n}{h},p^{k_1}q^{k_2},6)$, where $h=lcm\{p^{k_1-4},q^{k_2-n}\}$.
\end{proof}

\begin{thm}\label{qn3q43=7}
Suppose that $n$ is a positive integer, $p$ and $q$ are prime power with $p,q\geq3$, there exists an OA$({p^4}{q^3}{h},p^{k_1}q^{k_2},7)$ for $4\leq k_1\leq{p^2+1}$ and $3\leq {k_2}\leq q+1$, where $h=lcm\{p^{k_1-4},q^{k_2-3}\}$.
\end{thm}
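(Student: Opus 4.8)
The plan is to obtain the desired orthogonal array as a direct application of Construction \ref{3-2}, feeding it two large sets of orthogonal arrays of strength $3$: one with prime-power level $p$ and $p^4$ runs, and one with prime-power level $q$ and $q^3$ runs. Since Construction \ref{3-2} combines an LOA of strength $t_1$ with an LOA of strength $t_2$ to produce an OA of strength $t_1+t_2+1$, the choice $t_1=t_2=3$ yields strength $7$, exactly as claimed.

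First I would invoke Theorem \ref{LOAq4-3}, applied with $p$ in the role of the prime power: for any prime power $p\geq3$ and every $k_1$ with $4\leq k_1\leq p^2+1$, there exists an LOA$(p^4,k_1,p,3)$. In the mixed-level notation of Construction \ref{3-2} this is an LOA$(p^4,p^{k_1},3)$ with $N_1=p^4$ and $t_1=3$. Next I would supply the second ingredient, an LOA$(q^3,k_2,q,3)$ for $3\leq k_2\leq q+1$; this comes from Lemma \ref{03}(2) with $t=3$ (admissible since $q+1\geq3$ for every prime power $q\geq3$), giving an LOA$(q^3,q^{k_2},3)$ with $N_2=q^3$ and $t_2=3$.

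Finally I would apply Construction \ref{3-2} to these two large sets. It produces an OA$(hN_1N_2,\,p^{k_1}q^{k_2},\,t_1+t_2+1)$ with $h=lcm\{\frac{p^{k_1}}{N_1},\frac{q^{k_2}}{N_2}\}=lcm\{p^{k_1-4},q^{k_2-3}\}$ and $t_1+t_2+1=7$. Since $N_1N_2=p^4q^3$, this is precisely an OA$(p^4q^3h,\,p^{k_1}q^{k_2},\,7)$ with the $h$ stated in the theorem.

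I do not expect any real obstacle here: the only thing to check is that the parameter ranges $4\leq k_1\leq p^2+1$ and $3\leq k_2\leq q+1$ are exactly those under which the two cited results hold, so no additional restriction on $p$, $q$, $n$, $k_1$, $k_2$ is introduced. (Note that $n$ does not actually enter the argument; it is a residual parameter from the surrounding family of theorems, and the statement holds for every positive integer $n$ trivially.) The genuine difficulty — building a strength-$3$ LOA on $q^2+1$ columns over $\mathbb{F}_p$ via the quadratic polynomial $g$ — was already dispatched in Theorem \ref{LOAq4-3}, so the present statement is a short corollary of that result together with Lemma \ref{03} and Construction \ref{3-2}.
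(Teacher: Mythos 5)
Your proposal is correct and follows essentially the same route as the paper: an LOA$(p^4,k_1,p,3)$ from Theorem \ref{LOAq4-3}, an LOA$(q^3,k_2,q,3)$ from Lemma \ref{03}(2) with $t=3$, combined via Construction \ref{3-2} to give strength $3+3+1=7$ with $h=lcm\{p^{k_1-4},q^{k_2-3}\}$. Your remark that $n$ plays no role (and your cleaner statement of the range $3\leq k_2\leq q+1$) is a small tidying of the paper's wording, not a different argument.
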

\begin{proof}
For any prime power $p\geq3$ and $4\leq k_1\leq{p^2+1}$,
there exists an LOA$(p^4,k_1,p,3)$ by Theorem \ref{LOAq4-3}, and
for any prime power $q\geq3$ and $2\leq n\leq {k_2}\leq q+1$, there exists an LOA$(q^3,{k_2},q,3)$ by Theorem \ref{03}.
By Construction \ref{3-2}, there exists an OA$({p^4}{q^3}{h},p^{k_1}q^{k_2},7)$, where $h=lcm\{p^{k_1-4},q^{k_2-3}\}$.
\end{proof}

\begin{thm}\label{q3323=7}
If $q$ is a power of $2$, $n\geq 2$, $3\leq k_1\leq q+2$, $n\leq k_2\leq 2^n-1$ and $n+1\leq k_2'\leq 2^n$, then there exist an OA$(2^{n}{q^3}h,{q^{k_1}}2^{k_2},6)$ and an OA$(2^{n+1}{q^3}h',{q^{k_1}}2^{k_2'},7)$, where $h=lcm\{q^{k_1-3},2^{k_2-n}\}$ and $h'=lcm\{q^{k_1-3},2^{k_2'-n-1}\}$.
\end{thm}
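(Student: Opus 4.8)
The plan is to build both arrays by feeding a large set of orthogonal arrays with $q$ levels and a large set with $2$ levels into the Kronecker-type Construction \ref{3-1}, in exactly the same spirit as Theorems \ref{qn2n-com} and \ref{qn3q43=7}. First I would record the three ingredient large sets. Since $q$ is a power of $2$ and $3\le k_1\le q+2$, Lemma \ref{03}(1) supplies an LOA$(q^3,k_1,q,3)$. Since $n\ge2$ and $n\le k_2\le 2^n-1$, Theorem \ref{LOA$(2^n)$} supplies an LOA$(2^n,k_2,2,2)$; and since $n\ge2$ and $n+1\le k_2'\le 2^n$, Theorem \ref{LOA-level2-str3} supplies an LOA$(2^{n+1},k_2',2,3)$.

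For the strength-$6$ array I would apply Construction \ref{3-1} to the LOA$(q^3,k_1,q,3)$ and the LOA$(2^n,k_2,2,2)$: in the notation of that construction take $p=q$, $m=k_1$, $t_1=3$, $N_1=q^3$ for the first factor, and $2$ levels with degree $k_2$, $t_2=2$, $N_2=2^n$ for the second. The construction then yields an OA$(hN_1N_2, q^{k_1}2^{k_2}, t_1+t_2+1)$ with $h=lcm\{q^{k_1}/q^3, 2^{k_2}/2^n\}=lcm\{q^{k_1-3},2^{k_2-n}\}$, that is, an OA$(2^{n}q^3h, q^{k_1}2^{k_2}, 6)$, as required.

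For the strength-$7$ array I would repeat the argument verbatim with the LOA$(2^{n+1},k_2',2,3)$ in place of the LOA$(2^n,k_2,2,2)$, so now $t_2=3$ and $N_2=2^{n+1}$. Construction \ref{3-1} then gives an OA of strength $3+3+1=7$ on the factor set $q^{k_1}2^{k_2'}$, with $h'=lcm\{q^{k_1-3},2^{k_2'-n-1}\}$ and run size $q^3\cdot 2^{n+1}\cdot h'=2^{n+1}q^3h'$, which is the claimed OA$(2^{n+1}q^3h', q^{k_1}2^{k_2'}, 7)$.

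There is essentially no hard step here; the entire content is checking that the parameter ranges in the hypotheses line up with those of the three cited existence results, and then reading off $h$, $h'$, the run sizes, and the strengths directly from Construction \ref{3-1}. The one point worth watching is that Lemma \ref{03}(1) requires $q$ to be a power of $2$ rather than merely a prime power, which is precisely why the statement restricts $q$ in this way; granting that, the remainder is routine bookkeeping.
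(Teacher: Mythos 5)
Your proposal is correct and follows essentially the same route as the paper: the same three ingredient large sets (an LOA$(q^3,k_1,q,3)$ from Lemma \ref{03}(1), an LOA$(2^n,k_2,2,2)$ from Theorem \ref{LOA$(2^n)$}, and an LOA$(2^{n+1},k_2',2,3)$ from Theorem \ref{LOA-level2-str3}) combined by the Kronecker-type product, with the same reading-off of $h$, $h'$ and the strengths. The only cosmetic difference is that you invoke Construction \ref{3-1} while the paper cites its mixed-level generalization Construction \ref{3-2}; since both ingredient LOAs are single-level, the two are interchangeable here.
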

\begin{proof}
By assumption  there exist an LOA$(q^3,k_1, q, 3)$ and an LOA$(2^{n},k_2,2,2)$
by Lemma \ref{03} and Theorem \ref{LOA$(2^n)$} respectively.
So does an OA$(2^{n}{q^3}h,{q^{k_1}}2^{k_2},6)$ by Construction \ref{3-2}.

By assumption  there exist an LOA$(q^3,k_1, q, 3)$ and an LOA$(2^{n+1},k_2',2,3)$
by Lemma \ref{03} and Theorem \ref{LOA-level2-str3} respectively.
So does an OA$(2^{n+1}{q^3}h,{q^{k_1}}2^{k_2'},7)$ by Construction \ref{3-2}.
\end{proof}

\begin{thm}\label{t-1q43=t+3}
Suppose that $s$ and $t$ are integers with $s,t\geq 2$, $q\geq 3$ is a prime power
and integer $k$ satisfying $4\leq k\leq q^2+1$, then there exists an OA$({q^4}{s^{t-1}}h,{q^{k}}{s^t},t+3)$, where $h=lcm\{s,q^{k-4}\}$.
\end{thm}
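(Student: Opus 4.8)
The plan is to obtain the asserted array as an output of Construction \ref{3-2}, fed with two ingredient large sets: one carrying the $k$ factors at level $q$ and one carrying the $t$ factors at level $s$. Since Construction \ref{3-2} turns an LOA of strength $t_1$ together with an LOA of strength $t_2$ into an OA of strength $t_1+t_2+1$, and we are aiming for strength $t+3$, the strengths of the two ingredients must sum to $t+2$; the natural split is strength $3$ for the $q$-part and strength $t-1$ for the $s$-part.

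First I would supply the $q$-part. Since $q\geq 3$ is a prime power and $4\leq k\leq q^2+1$, Theorem \ref{LOAq4-3} gives an LOA$(q^4,k,q,3)$, which in the mixed notation is an LOA$(q^4,q^k,3)$; here $N=q^4$ and the ratio $q^k/q^4=q^{k-4}$ will feed into $h$. Next I would supply the $s$-part: applying Lemma \ref{5,6}(7) with its parameter equal to $t-1$ (legitimate since $t\geq 2$ makes $t-1$ a positive integer) yields an LOA$(s^{t-1},t,s,t-1)$, that is, an LOA$(s^{t-1},s^t,t-1)$ with $N=s^{t-1}$ and ratio $s^t/s^{t-1}=s$. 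Note that $s$ is only assumed to be an integer $\geq 2$, so Lemma \ref{5,6}(7), which holds for all positive integers, is exactly the tool that matches the hypotheses.

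Finally I would apply Construction \ref{3-2} to these two large sets, taking $N_1=s^{t-1}$, $t_1=t-1$ for the $s$-part and $N_2=q^4$, $t_2=3$ for the $q$-part. The construction outputs an OA$(hN_1N_2,\,q^k s^t,\,t_1+t_2+1)$, where $h=lcm\{s^t/s^{t-1},\,q^k/q^4\}=lcm\{s,\,q^{k-4}\}$, $N_1N_2=s^{t-1}q^4$, and $t_1+t_2+1=(t-1)+3+1=t+3$; this is precisely an OA$(q^4 s^{t-1} h,\,q^k s^t,\,t+3)$, as claimed.

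There is no genuine obstacle here: every step is a direct citation of an earlier result, and the theorem is essentially a packaging of Theorem \ref{LOAq4-3}, Lemma \ref{5,6}(7), and Construction \ref{3-2}. The only point requiring care is the bookkeeping — confirming that Lemma \ref{5,6}(7) really delivers a large set with run size $s^{t-1}$, $t$ factors at level $s$, and strength $t-1$, and checking that the $h$ produced by Construction \ref{3-2} collapses to $lcm\{s,q^{k-4}\}$ and that the run size collapses to $q^4 s^{t-1} h$.
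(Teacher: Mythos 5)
Your proposal is correct and follows essentially the same route as the paper: the paper's proof likewise combines the LOA$(s^{t-1},t,s,t-1)$ from Lemma \ref{5,6}(7) (with parameter $t-1$) and the LOA$(q^4,k,q,3)$ from Theorem \ref{LOAq4-3} via Construction \ref{3-2}, giving strength $(t-1)+3+1=t+3$ and $h=lcm\{s,q^{k-4}\}$. Your bookkeeping of the run size $q^4 s^{t-1}h$ matches the paper exactly.
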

\begin{proof}
By assumption there exist an LOA$(s^{t-1},t, s, t-1)$ and an LOA$(q^{4},k,q,3)$
by Lemma \ref{5,6} and Theorem \ref{LOAq4-3} respectively. By Construction \ref{3-2} there exists an OA$({q^4}{s^{t-1}}h,{q^{k}}{s^t},t+3)$.
\end{proof}

\begin{thm}\label{qt2n2-3}
If $q$ is a prime power, $n\geq 2$, $1\leq t\leq k_1\leq q+1$, $n\leq k_2\leq 2^n-1$ and $n+1\leq k_2'\leq 2^n$, then there exist an OA$(2^{n}{q^t}h,{q^{k_1}}2^{k_2},t+3)$ and an OA$(2^{n+1}{q^t}h',{q^{k_1}}2^{k_2'},t+4)$, where $h=lcm\{q^{k_1-t},2^{k_2-n}\}$ and $h'=lcm\{q^{k_1-t},2^{k_2'-n-1}\}$.
\end{thm}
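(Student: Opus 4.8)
The plan is to assemble both orthogonal arrays from three known large sets by a single black-box application of Construction \ref{3-1} in each case. First I would collect the ingredients. Since $q$ is a prime power and $1\leq t\leq k_1\leq q+1$, Lemma \ref{03}(2) yields an LOA$(q^{t},k_1,q,t)$. Since $n\geq 2$ and $n\leq k_2\leq 2^{n}-1$, Theorem \ref{LOA$(2^n)$} yields an LOA$(2^{n},k_2,2,2)$; and since $n+1\leq k_2'\leq 2^{n}$, Theorem \ref{LOA-level2-str3} yields an LOA$(2^{n+1},k_2',2,3)$.

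For the first conclusion I would apply Construction \ref{3-1} to the LOA$(q^{t},k_1,q,t)$ and the LOA$(2^{n},k_2,2,2)$. In the notation of that construction the first large set contributes $N_1=q^{t}$ with base $q$, degree $k_1$, strength $t_1=t$, and the second contributes $N_2=2^{n}$ with base $2$, degree $k_2$, strength $t_2=2$, so the construction outputs an OA$(hN_1N_2,\,q^{k_1}2^{k_2},\,t_1+t_2+1)$ with $h=lcm\{q^{k_1}/q^{t},\,2^{k_2}/2^{n}\}$. Since $N_1N_2=2^{n}q^{t}$, $t_1+t_2+1=t+3$, and $h=lcm\{q^{k_1-t},2^{k_2-n}\}$, this is exactly OA$(2^{n}q^{t}h,\,q^{k_1}2^{k_2},\,t+3)$. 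For the second conclusion I would instead pair the same LOA$(q^{t},k_1,q,t)$ with the LOA$(2^{n+1},k_2',2,3)$, so that the second input has $N_2=2^{n+1}$ and strength $t_2=3$; Construction \ref{3-1} then delivers strength $t+3+1=t+4$, run size $2^{n+1}q^{t}h'$, and $h'=lcm\{q^{k_1}/q^{t},\,2^{k_2'}/2^{n+1}\}=lcm\{q^{k_1-t},2^{k_2'-n-1}\}$, i.e.\ OA$(2^{n+1}q^{t}h',\,q^{k_1}2^{k_2'},\,t+4)$.

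Since each of the three large sets is a verbatim instance of an already-proved statement and Construction \ref{3-1} is used exactly as stated, I do not expect any genuine obstacle. The only thing requiring care is the symbol bookkeeping — matching the two input large sets to the slots of Construction \ref{3-1} in the correct order, checking that the divisibility conditions $k_1\geq t$, $k_2\geq n$, $k_2'\geq n+1$ hold so that each LOA is well defined, and simplifying the two $lcm$ expressions — which is precisely the pattern already carried out in the proofs of Theorems \ref{qn2n-com}, \ref{qn2q43=6}, and \ref{q3323=7}.
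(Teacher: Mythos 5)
Your proof is correct and follows essentially the same route as the paper: the same three ingredient large sets (Lemma \ref{03}(2), Theorem \ref{LOA$(2^n)$}, Theorem \ref{LOA-level2-str3}) combined pairwise by the Kronecker-product construction, with the same bookkeeping of $h$, $h'$ and the strengths $t+3$, $t+4$. The only cosmetic difference is that you cite Construction \ref{3-1} while the paper cites Construction \ref{3-2}; since both input LOAs are pure-level, the two constructions are interchangeable here and your choice is equally valid.
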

\begin{proof}
By assumption there exist an LOA$(q^t,k_1, q, t)$ and an LOA$(2^n,k_2,2,2)$
by Lemma \ref{03} and Theorem \ref{LOA$(2^n)$} respectively.
By Construction \ref{3-2}, there exists an OA$(2^{n}{q^t}h,{q^{k_1}}2^{k_2},t+3)$.

In a similar way, there exist an LOA$(q^t,k_1, q, t)$ and an LOA$(2^{n+1},k_2',2,3)$
by Lemma \ref{03} and Theorem \ref{LOA-level2-str3} respectively.
By Construction \ref{3-2}, there exists an OA$(2^{n+1}{q^t}h',{q^{k_1}}2^{k_2'},t+4)$.
\end{proof}

\begin{thm}\label{tt-1n2-3}
Suppose that $s$, $t$ and $n$ are integers with $s,t,n\geq 2$,  $n\leq k_1\leq 2^n-1$ and $n+1\leq k_1'\leq 2^n$, then there exist an OA$(2^{n}{s^{t-1}}h,2^{k_1}s^t,t+2)$ and an OA$(2^{n+1}{s^{t-1}}h',2^{k_1'}s^t,t+3)$, where $h=lcm\{s,2^{k_1-n}\}$ and $h'=lcm\{s,2^{k_1'-n-1}\}$.
\end{thm}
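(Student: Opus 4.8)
The plan is to obtain both arrays by a single application of the product construction for large sets, Construction \ref{3-1}, to a pair of symmetric large sets of orthogonal arrays --- one on the $2$-level factors and one on the $s$-level factors. The $s$-level ingredient is the same in both parts: by Lemma \ref{5,6}(7), applied with $t$ replaced by $t-1$ (legitimate since $t\geq 2$ gives $t-1\geq 1$), there exists an LOA$(s^{t-1},t,s,t-1)$. For the $2$-level ingredient, Theorem \ref{LOA$(2^n)$} provides an LOA$(2^n,k_1,2,2)$ under the hypotheses $n\geq 2$ and $n\leq k_1\leq 2^n-1$, while Theorem \ref{LOA-level2-str3} provides an LOA$(2^{n+1},k_1',2,3)$ under $n\geq 2$ and $n+1\leq k_1'\leq 2^n$; both hypothesis sets are exactly those assumed in the statement.

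For the first conclusion, I would feed LOA$(2^n,k_1,2,2)$ (in the role of LOA$(N_1,m,p,t_1)$ with $p=2$, $m=k_1$, $t_1=2$) and LOA$(s^{t-1},t,s,t-1)$ (in the role of LOA$(N_2,n,q,t_2)$ with $q=s$ and $t_2=t-1$) into Construction \ref{3-1}. This yields an OA$(h\cdot 2^n\cdot s^{t-1},\,2^{k_1}s^t,\,t_1+t_2+1)$ with $h=lcm\{2^{k_1}/2^n,\,s^t/s^{t-1}\}=lcm\{2^{k_1-n},s\}$ and strength $2+(t-1)+1=t+2$, which is precisely the asserted OA$(2^{n}s^{t-1}h,\,2^{k_1}s^t,\,t+2)$.

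For the second conclusion, I would replace the first ingredient by LOA$(2^{n+1},k_1',2,3)$ (so now $t_1=3$) and keep LOA$(s^{t-1},t,s,t-1)$; Construction \ref{3-1} then gives an OA with run multiplier $h'=lcm\{2^{k_1'}/2^{n+1},\,s^t/s^{t-1}\}=lcm\{2^{k_1'-n-1},s\}$ and strength $3+(t-1)+1=t+3$, i.e. OA$(2^{n+1}s^{t-1}h',\,2^{k_1'}s^t,\,t+3)$. Since every step is a direct invocation of an already-established result, there is no genuine obstacle here; the only points needing care are checking that the parameter ranges $n\leq k_1\leq 2^n-1$ and $n+1\leq k_1'\leq 2^n$ (together with $n\geq 2$, $t\geq 2$) match the hypotheses of Lemma \ref{5,6}(7), Theorem \ref{LOA$(2^n)$} and Theorem \ref{LOA-level2-str3}, and the bookkeeping that makes the output run count and level multiplicities coincide with those in the statement.
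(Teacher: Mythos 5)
Your proposal is correct and follows essentially the same route as the paper: combine the LOA$(s^{t-1},t,s,t-1)$ from Lemma \ref{5,6}(7) with the LOA$(2^n,k_1,2,2)$ of Theorem \ref{LOA$(2^n)$} (respectively the LOA$(2^{n+1},k_1',2,3)$ of Theorem \ref{LOA-level2-str3}) via the Kronecker-product construction, with the same parameter bookkeeping for $h$, $h'$ and the strengths $t+2$, $t+3$. The only cosmetic difference is that you invoke Construction \ref{3-1} while the paper cites Construction \ref{3-2}; since both ingredient large sets are symmetric, these are interchangeable here.
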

\begin{proof}
By assumption there exist an LOA$(s^{t-1},t, s, t-1)$ and an LOA$(2^n,k_1,2,2)$
by Lemma \ref{5,6} and Theorem \ref{LOA$(2^n)$} respectively.
By Construction \ref{3-2}, there exists an OA$(2^{n}{s^{t-1}}h,2^{k_1}s^t,t+2)$.

In a similar way, there exist an LOA$(s^{t-1},t, s, t-1)$ and an LOA$(2^{n+1},k_2',2,3)$
by Lemma \ref{5,6} and Theorem \ref{LOA-level2-str3} respectively.
By Construction \ref{3-2}, there exists an OA$(2^{n+1}{s^{t-1}}h',2^{k_1'}s^t,t+3)$.
\end{proof}

\begin{thm}\label{qtp43}
Suppose that $t$ is an integer, $p$ and $q$ are prime power with $p,q\geq3$, there exists an OA$({p^4}{q^t}{h},p^{k_1}q^{k_2},t+4)$ for $4\leq k_1\leq{p^2+1}$ and $2\leq t\leq {k_2}\leq q+1$, where $h=lcm\{p^{k_1-4},q^{k_2-t}\}$.
\end{thm}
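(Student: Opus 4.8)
The plan is to obtain the desired array by a single application of the asymmetric product Construction \ref{3-2}, feeding it two large sets whose strengths combine correctly. Since Construction \ref{3-2} turns an LOA of strength $t_1$ and an LOA of strength $t_2$ into an OA of strength $t_1+t_2+1$, and the target strength is $t+4$, I would pair a strength-$3$ large set living on the $p$-level factors with a strength-$t$ large set living on the $q$-level factors, so that $3+t+1=t+4$.

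First I would apply Theorem \ref{LOAq4-3} with its parameter taken to be $p$: since $p$ is a prime power with $p\geq3$, there exists an LOA$(p^4,k_1,p,3)$ for every $k_1$ in the range $4\leq k_1\leq p^2+1$. This is the first ingredient of Construction \ref{3-2}: it has run size $N_1=p^4$, strength $t_1=3$, product of level sizes $p^{k_1}$, and therefore $p^{k_1}/N_1=p^{k_1-4}$. Next I would apply Lemma \ref{03}(2): since $q$ is a prime power and $2\leq t\leq q+1$, there exists an LOA$(q^t,k_2,q,t)$ for every $k_2$ with $t\leq k_2\leq q+1$, which is precisely the range $2\leq t\leq k_2\leq q+1$ appearing in the statement. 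This is the second ingredient, with run size $N_2=q^t$, strength $t_2=t$, product of level sizes $q^{k_2}$, and hence $q^{k_2}/N_2=q^{k_2-t}$.

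Feeding these two large sets into Construction \ref{3-2} then produces an OA$(h\,p^4 q^t,\,p^{k_1}q^{k_2},\,3+t+1)$ with $h=lcm\{p^{k_1-4},q^{k_2-t}\}$, which is exactly the asserted OA$({p^4}{q^t}{h},p^{k_1}q^{k_2},t+4)$. There is no genuine obstacle here beyond bookkeeping: the only point that needs checking is that the stated ranges for $k_1$, $k_2$ and $t$ lie inside the hypotheses of Theorem \ref{LOAq4-3} and Lemma \ref{03}(2) — in particular that $t\geq 2$ forces $t\leq q+1$ to be the binding constraint on the $q$-side — which is immediate from the assumptions. The argument is the direct analogue of the proofs of Theorems \ref{qn2q43=6}, \ref{qn3q43=7} and \ref{qt2n2-3}.
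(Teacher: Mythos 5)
Your proposal is correct and follows exactly the paper's own argument: combine the LOA$(p^4,k_1,p,3)$ from Theorem \ref{LOAq4-3} with the LOA$(q^t,k_2,q,t)$ from Lemma \ref{03}(2) via Construction \ref{3-2}, giving strength $3+t+1=t+4$ and $h=lcm\{p^{k_1-4},q^{k_2-t}\}$. In fact your write-up is slightly cleaner, since the paper's final line misprints the strength as $6$ instead of $t+4$.
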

\begin{proof}
For any prime power $p\geq3$ and $4\leq k_1\leq{p^2+1}$,
there exists an LOA$(p^4,k_1,p,3)$ by Theorem \ref{LOAq4-3}, and
for any prime power $q\geq3$ and $2\leq t\leq {k_2}\leq q+1$, there exists an LOA$(q^t,{k_2},q,t)$ by Lemma \ref{03}.
By Construction \ref{3-2}, there exists an OA$({p^4}{q^t}{h},p^{k_1}q^{k_2},6)$, where $h=lcm\{p^{k_1-4},q^{k_2-t}\}$.
\end{proof}

\section{Concluding remarks}

In this paper, we mainly focus on the construction of orthogonal arrays.
In view of Constructions \ref{3-1} and \ref{3-2}, a large set of orthogonal arrays
plays a crucial role on the construction of orthogonal arrays. Chen and Niu \cite{chen23} have obtained some infinite classes of
large sets of orthogonal arrays. Here we also obtain some new series of large set of orthogonal arrays, which are summarized in the following table.

\begin{center}
\textbf{Table 5 \ New LOAs }
\vskip 8pt
{\footnotesize
\begin{tabular}{lll}
\hline Parameters &  Constrains & References\\
\hline
LOA$(2^n,k,2,2)$&  $n\geq2$ and $n\leq k\leq 2^n-1$& Theorem \ref{LOA$(2^n)$}\\
LOA$(2^{n+1},k,2,3)$ & $n\geq2$ and $n+1\leq k\leq 2^n$& Theorem \ref{LOA-level2-str3}\\
LOA$(v^{3},k,v,2)$&$v\geq4$ and $v\not\equiv2 \pmod 4$, $4\leq k\leq 13$&Theorem \ref{chai11}\\
LOA$(v^{4},k,v,2)$ &$v\geq4$ and $v\not\equiv2 \pmod 4$, $4\leq k\leq29$&Theorem \ref{chai11}\\
LOA$(q^4,k,q,3)$& any prime power $q\geq 3$ and $4\leq k\leq q^2+1$& Theorem \ref{LOAq4-3}\\
\hline
\end{tabular}}
\end{center}
So there are about $16$ infinite classes large sets of orthogonal arrays from \textbf{Table 5} and Lemmas \ref{M-1}-\ref{5,6}, then we can obtain a new orthogonal array by combining any two of these large sets by Constructions \ref{3-1} and \ref{3-2}. Here we only list some orthogonal arrays of detailed parameters, see \textbf{Table 6}.

\begin{center}
\textbf{Table 6 \ \ \ New orthogonal arrays}
\vskip 8pt
{\scriptsize
\begin{tabular}{ccc}
\toprule[1pt] Parameters &  Constraints &References \\
\toprule[1pt]
\multirow{2}{*}{OA$(q^{k_2+1},2k_2-3,q,5)$}& any prime power $q\geq3$&$v=q$ and $k_1=k_2-1$\\
& and $4\leq k_2\leq{q^2+1}$&in Theorem \ref{v1+q4-3}\\\hline
\multirow{2}{*}{OA$(2^{k_2+1},2k_2-n+1,2,4)$}& $n\geq2$ and & $v=2$, $k_1=k_2-n+3$ \\
&  $n\leq k_2\leq{2^n-1}$& in Theorem \ref{v12n-4} \\\hline
\multirow{2}{*}{OA$(2^{k_2'+1},2k_2'-n,2,5)$}& $n\geq2$ and & $v=2$, $k_1=k_2'-n+2$\\
& ${n+1}\leq k_2'\leq{2^n}$&in Theorem \ref{v12n-4}\\\hline
\multirow{2}{*}{OA$(2^{k+n},2k,2,5)$}&$n\geq2$ and &$q=2$, $k_1=k_2=k$, $m=n$\\
&$n\leq k\leq{2^n-1}$&in Theorem \ref{qn2n-com}\\\hline
\multirow{2}{*}{OA$(2^{k_1+n},2k_1+1,2,6)$}&$n\geq2$ and &$q=2$, $k_1=k_2'-1$, $m=n$\\
&$n\leq k_1\leq{2^n-1}$&in Theorem \ref{qn2n-com}\\\hline
\multirow{2}{*}{OA$(q^{k_2+4},2k_2+2,q,6)$}&any prime power $q\geq3$,&$p=q$, $k_1=k_2+2$, $n=2$\\
&$4\leq k_2\leq{q+1}$&in Theorem \ref{qn2q43=6}\\\hline
\multirow{2}{*}{OA$(q^{k+4},2k,q,6)$}&any prime power $q\geq3$,&$p=q$, $k_1=k_2=k$, $n=4$\\
&$4\leq k\leq{q^2+1}$&in Theorem \ref{qn2q43=6}\\\hline
\multirow{2}{*}{OA$(2^{k_1m+n},(2^m)^{k_1}2^{(k_1-3)m+n},6)$}&$m\geq 1$, $n\geq 2$ and & $q=2^m$, $k_2=m(k_1-3)+n$\\
&$4\leq k_1\leq 2^m+2$, & in Theorem \ref{q3323=7}\\\hline
\multirow{2}{*}{OA$(2^{k_1m+n+1},(2^m)^{k_1}2^{(k_1-3)m+n+1},7)$}&$m\geq 1$, $n\geq 2$ and& $q=2^m$, $k_2=(k_1-3)m+n+1$\\
&$4\leq k_1\leq 2^m+2$, & in Theorem \ref{q3323=7}\\\hline
\multirow{2}{*}{OA$(q^{(k-4)t+4},(q^{k-4})^tq^k,t+3)$}& any prime power $q\geq 3$,& $s=q^{k-4}$\\
&any integer $t\geq 2$, $4\leq k\leq q^2+1$& in Theorem \ref{t-1q43=t+3}\\\hline
\multirow{2}{*}{OA$(2^{m{k_1}+n},{(2^m)}^{k_1}2^{m(k_1-t)+n},t+3)$}& $ m\geq 1$ and $n,t\geq 2$, & $q=2^m, k_2=m(k_1-t)+n$\\
& $2\leq k_1\leq 2^m+1$& in Theorem \ref{qt2n2-3}\\\hline
\multirow{2}{*}{OA$(2^{m{k_1}+n+1},{(2^m)}^{k_1}2^{m(k_1-t)+n+1},t+4)$}& $ m\geq 3$ and $n,t\geq 2$, & $q=2^m, {k_2}'=m(k_1-t)+n+1$\\
& $7\leq k_1\leq 2^m+1$& in Theorem \ref{qt2n2-3}\\\hline
\multirow{2}{*}{OA$(2^{({k_1}-n)t+n},{(2^{k_1-n})}^t2^{k_1},t+2)$}& $ t\geq 2$ and $n\geq 2$, & $s=2^{k_1-n}$\\
& $n\leq k_1\leq 2^n-1$& in Theorem \ref{tt-1n2-3}\\\hline
\multirow{2}{*}{OA$(2^{({k_1'}-n-1)t+n+1},{(2^{k_1'-n-1})}^t2^{k_1'},t+3)$}& $ t\geq 2$ and $n\geq 2$, & $s=2^{k_1'-n-1}$\\
& $n+1\leq k_1'\leq 2^n$& in Theorem \ref{tt-1n2-3}\\\hline
\multirow{2}{*}{OA$(q^{{k_1}+t},q^{2k_1+t-4},t+4)$}& prime power $q\geq 3$, $ q+1\geq t\geq 2$  & $p=q, k_2=k_1+t-4$\\
& $4\leq k_1\leq q+5-t$& in Theorem \ref{qtp43}\\
\bottomrule[1pt]
\end{tabular}}
\end{center}

The saturation of the orthogonal arrays obtained by our constructions  is low, but
Constructions \ref{3-1}-\ref{3-2} give the efficient methods for construction of orthogonal arrays. How to give a new construction of orthogonal arrays with higher saturation will become the next important research problem and challenge.
The definition of an LOA with the same levels was proposed firstly by Stinson\cite{D.R} and it is used to computer science and cryptography. In addition, Zhang and Lei \cite{Z.Y} found that large sets of orthogonal arrays can be also used to construct multimagic squares. We believe that the results on LOA especially with mixed level have more
flexible and important applications. So exploring new applications of LOA is also a worthwhile research topic.

\newpage
\noindent
\textbf{Acknowledgments}

This paper was supported by the National Natural Science Foundation
of China under Grant No 11971104.

\newpage
\begin{center}
\textbf{Appendix A}
\end{center}
\vskip12pt

Note that the symbol ``T" represents the transposition of an array.
\vskip12pt

\ \ \ \ \ OA$(20,2^85^1,2)$  \ \ \ \ \ \ \ \ \ \ \ \ \ \ \ OA$(24,2^{13} 3^1 4^1,2)$ \ \ \ \ \ \ \ \ \ \  \ \ \ \ \ OA$(28,2^{12} 7^1,2)$

$\left(
\right)^T$

\end{document}